\newcommand{\diff}{\mathrm{d}}
\newcommand{\Dist}{\mathcal{D}}
\newcommand{\Jop}{J}
\newcommand{\metric}{g_{\Jop}}
\newcommand{\Reeb}{R}
\newcommand{\ReebT}{\widetilde{R}}
\newcommand{\OmegaRm}{\Omega_{\textnormal{Rm}}}
\newcommand{\diffRm}{\mathrm{d}_{\textnormal{Rm}}}
\newcommand{\OmegaH}{\Omega_{\textnormal{H}}}
\newcommand{\classRm}[1]{[#1]_{\textnormal{Rm}}}
\newcommand{\equivRm}{\overset{\textnormal{Rm}}{\sim}}
\newcommand{\deltaRm}{\delta_{\textnormal{Rm}}}
\newcommand{\diffH}{\diff_{\textnormal{H}}}
\newcommand{\diver}{\textnormal{div}}
\newcommand{\DeltaRm}{\Delta_{\textnormal{Rm}}}
\newcommand{\nablaRm}{\nabla_{\textnormal{Rm}}}
\newcommand{\norm}[1]{\| #1 \|_{\Jop}}
\newcommand{\horiz}[1]{#1_{\textnormal{H}}}
\newcommand{\DeltaRmA}[1]{\Delta_{\textnormal{Rm},#1}}
\newcommand{\nablaRmA}[1]{\nabla_{\textnormal{Rm},#1}}
\newcommand{\scalar}[2]{\langle #1,#2\rangle_{\Jop}}
\newcommand{\doubleNablaRm}{\nabla^2_{\textnormal{Rm}}}
\newcommand{\Wfirst}{W^1_{\textnormal{Rm}}}
\newcommand{\normWfirst}[1]{\| #1 \|_{\Wfirst}}
\newcommand{\Wsecond}{W^2_{\textnormal{Rm}}}
\newcommand{\normWsecond}[1]{\| #1 \|_{\Wsecond}}
\newcommand{\LH}{\mathfrak{L}_{\textnormal{H}}}
\newcommand{\LHB}{\mathfrak{L}_{\textnormal{HB}}}
\newcommand{\metricP}{g_{\Jop_{\mathbb{T}^2_k}}}
\newcommand{\metricB}{g_{\Jop_B}}
\newcommand{\scalarB}[2]{\langle #1,#2\rangle_{\Jop_B}}
\newcommand{\normB}[1]{\| #1 \|_{J_B}}
\newcommand{\Lfrak}{\mathfrak{L}}
\newcommand{\firstRev}[1]
{\textcolor{black}{#1}}
\title{Spectral properties of magnetic fields on sub-Riemannian contact manifolds
\thanks{Submitted for review on \today.}}
\author{Riccardo Bonalli\thanks{Laboratoire des Signaux et Syst\`emes, Université Paris-Saclay, CNRS, CentraleSupélec, Bât. Bréguet, 3 Rue Joliot Curie, 91190 Gif-sur-Yvette, France. \textit{E-mail}: {\texttt riccardo.bonalli@cnrs.fr}.}
\and Dario Prandi\thanks{Laboratoire des Signaux et Syst\`emes, Université Paris-Saclay, CNRS, CentraleSupélec, Bât. Bréguet, 3 Rue Joliot Curie, 91190 Gif-sur-Yvette, France. \textit{E-mail}: {\texttt dario.prandi@cnrs.fr}.}}
\newtheorem{theorem}{Theorem}[section]
\newtheorem{lemma}{Lemma}[section]
\newtheorem{proposition}{Proposition}[section]
\newtheorem{corollary}{Corollary}[section]
\theoremstyle{definition}
\newtheorem{remark}{Remark}[section]
\begin{document}

\maketitle

\begin{abstract}
Motivated by some recent studies of the magnetic Laplacian on Riemannian manifolds, we focus on the first eigenvalue of the magnetic horizontal Laplacian on contact manifolds. We characterize conditions for positive spectral shift, and provide some sharp upper bounds. In the Riemannian setting, a genus 1 assumption is known to force the underlying metric to be flat when equality holds in the sharp upper bounds. Interestingly, we show that the equivalent topological condition in the three--dimensional contact setting consists of having first Betti number equal to 2.
In this case, equality in our upper bounds implies that the structure is that of a Heisenberg left--invariant nilmanifold.
%
We conclude by showing that, in some specific three--dimensional contact settings, the knowledge of the first eigenvalue of the magnetic Laplacian uniquely determines the manifold Chern class, fully determining the topology of the underlying manifold.
\end{abstract}




\section{Introduction}
\label{sec:Introduction}

The concept of magnetic fields on Riemannian manifolds underlines the deep connections between geometry and physics, and has been deeply investigated both from the classical and the quantum point of view. 
Given a Riemannian manifold $M$, we will mostly consider magnetic fields to be (complex) 2--forms $B\in \Omega^2(M)$ which are closed (i.e., $\diff B=0$) and exact. That is, $\diff B=0$, and there exists a one form $A\in \Omega^1(M)$, the magnetic potential, such that $\diff A=B$. In this formalism, the quantum dynamics are driven by the magnetic Laplacian, that is the operator defined by
\begin{equation}
    \Delta_A \triangleq -\star(\diff + iA\wedge)\star(\diff + iA\wedge).
\end{equation}
A key result for the quantum point of view is the diamagnetic inequality, which 
especially implies that magnetic potentials yield spectral improvements, whose nature depends on many factors but mainly on the fact that the underlying manifold be compact or not.

These improvements appear even in case the magnetic potential generates a null magnetic field, which can only happen in the non--simply connected case.
This is the so--called Aharonov--Bohm phenomenon, first highlighted in \cite{ehrenbergRefractive1949,aharonovSignificance1959} and henceforth experimentally validated in \cite{peshkinAharonovBohm1989}, which is a manifestation of the fact that, unlike to what happens for classical trajectories, in quantum mechanics the choice of the potential, and not only of the magnetic field, significantly affect the dynamics.

When the Riemannian manifold under consideration is the one dimensional sphere, i.e., $M=\mathbb{S}^1$, this phenomenon is apparent. Indeed, considering the magnetic potential $A = 2\pi\alpha\, \diff \theta$, with $\alpha\in \mathbb R$, a Fourier--based argument easily shows that the first eigenvalue $\lambda_1(A)$ of the corresponding magnetic Laplacian satisfies
\begin{equation}
    \label{eq:improvement-sphere}
    \lambda_1(A) = \operatorname{dist}(\alpha,\mathbb{Z})^2.
\end{equation}
In \cite{laptevHardy2011}, the authors leveraged this fact to prove that a non-integer Aharanov--Bohm potential  $A = 2\pi\alpha({x_1 \diff x_2 - x_2 \diff x_1})/{|x|}$ on the punctured plane $\mathbb{R}^2\setminus\{0\}$ yields a sub--critical magnetic Laplacian. More precisely, the following sharp Hardy inequality is shown to hold true in the sense of quadratic forms:
\begin{equation}
    \label{eq:laptev}
    \Delta_A \ge \frac{\operatorname{dist}(\alpha,\mathbb{Z})^2}{|x|^2}.
\end{equation}

The spectral improvement \eqref{eq:improvement-sphere} has been partially generalized to the case of a compact Riemannian manifold $M$ endowed with a magnetic potential $A \in \Omega^1 (M)$ in \cite{shigekawaEigenvalue1987}, where the author shows that
\begin{equation}
    \label{eq:shigekawa}
    \lambda_1(A) = 0 \quad \iff \quad \diff A =0 \text{ \ and } \int_\gamma A \in 2\pi\mathbb{Z} \text{ \ for any smooth loop } \gamma: \mathbb S^1 \to M .
\end{equation}
This result has been complemented in \cite{Colbois2017} by a quantitative upper bound which, for Aharonov--Bohm magnetic potentials (that is, $A$ harmonic and thus $\diff A = 0$) reads
\begin{equation}
    \label{eq:colbois-savo}
    \lambda_1(A)\le
    \frac{\operatorname{dist}(A,\mathfrak{L}^1(M))^2}{\operatorname{vol}(M)},
\end{equation}
where $\mathfrak L^1(M)$ is the lattice of harmonic forms $\omega$ with $\int_\gamma \omega \in 2\pi \mathbb{Z}$ for any loop $\gamma:\mathbb{S}^1\to M$. 
The authors also show a rigidity result: if the above inequality is an equality and $M$ is two--dimensional with genus $1$, i.e., it is diffeomorphic to a torus, then $M$ must be the flat torus, see \cite[Theorem 4.2]{Colbois2017}. For general genus surfaces or higher dimensional tori, it is possible to identify a countable family of Aharonov--Bohm potential (the magnetic ground state spectrum) such that the corresponding first eigenvalues uniquely determine the metric, for instance see the result in \cite{colboisMagnetic2025}.

Recently, building upon the works of Rumin \cite{Rumin1994}, in \cite{Franceschi2020} the concept of magnetic field has been extended to the setting of the Heisenberg group, which is the most basic example of sub--Riemannian contact manifold. The authors also generalize various spectral improvements to this setting, and in particular they have showed that a sub--Riemannian generalization of \eqref{eq:laptev} holds. Since the Rumin complex is well-defined for any contact manifold, this naturally begs the question of whether appropriate sub--Riemannian generalizations of \eqref{eq:shigekawa} and \eqref{eq:colbois-savo} hold similarly in the contact case. We point out that in the recent work \cite{Barilari2025} the authors have investigated similar generalizations through geodesic--based techniques. Yet, their approach, which does not leverage spectral analysis, applies to classical, i.e., non--quantum, settings.

A sub--Riemannian contact manifold is an odd-dimensional manifold $M$ endowed with a contact $1$--form $\alpha$, i.e., a $\alpha\in \Omega^1(M)$ such that $\alpha\wedge (\diff\alpha)^n\neq 0$, and a metric $h$ on the distribution $\Dist = \ker\alpha$. In this work, we will always assume this metric to be given by $h(X,Y)=\diff \alpha(X,JY)$, where $J:TM\to TM$ is any \emph{almost--complex structure} (see Section~\ref{sec:Preliminaries}). An important difference w.r.t.~the Heisenberg case considered in \cite{Franceschi2020}, is that in the general contact case there is no global frame for the metric $h$. It is still possible, however, to associate to any form $A\in \Omega^1(M)$ its horizontal representative $\horiz{A}\in \OmegaH^1(M)$ by requiring that $\horiz{A}\wedge (\diff\alpha)^n\equiv 0$. This allows one to define the horizontal differential $\diffH:\Omega^0(M)\to \OmegaH^1(M)$ as a projection $\diffH \varphi=\horiz{(\diff \varphi)}$ which, in turn, leads to defining the magnetic horizontal Laplacian (acting on smooth functions and associated with the magnetic potential $A$) through the rule
\begin{equation}
    \DeltaRmA{A} \triangleq -\star(\diff+iA\wedge)\star (\diffH + i\horiz{A}\wedge).
\end{equation}
When $M$ is compact, this operator is essentially self--adjoint on $L^2(M)$, non--negative, and with discrete spectrum. In particular, its first eigenvalue $\lambda_1(A)\ge 0$, our main focus, is well-defined.
Our first result is then the following generalization of \eqref{eq:shigekawa}.

\begin{theorem} 
It holds that $\lambda_1(A) = 0$ iff 
$\diffRm \horiz{A} = 0$ and
\begin{equation*}
\begin{aligned}
\int_{\gamma} A \in 2\pi \mathbb{Z}, \quad &\text{for every absolutely continuous (AC),} \\
&\text{closed, horizontal curve }\gamma:[0,1]\to M \text{ (i.e., such that $\dot \gamma(t)\in \Dist(\gamma(t))$)}.
\end{aligned}
\end{equation*}
\end{theorem}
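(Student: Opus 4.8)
The plan is to reduce the condition $\lambda_1(A)=0$ to the solvability of a first-order equation, and then to establish the two implications separately: horizontal transport along curves for the forward direction, and a Poincar\'e lemma for the Rumin complex together with the holonomy condition for the converse. The starting observation is that the factor $-\star(\diff+iA\wedge)\star$ plays the role of the formal adjoint of the twisted horizontal differential $\diffH+i\horiz{A}\wedge$, so that $\DeltaRmA{A}$ factors as this adjoint composed with $\diffH+i\horiz{A}\wedge$. Consequently its quadratic form on $L^2(M)$ is
\begin{equation*}
Q(\varphi)=\int_M\|\diffH\varphi+i\,\varphi\,\horiz{A}\|^2,
\end{equation*}
and since $\DeltaRmA{A}$ is non-negative with discrete spectrum, $\lambda_1(A)=0$ holds exactly when there is a nonzero $\varphi$ in the form domain with $\diffH\varphi=-i\,\varphi\,\horiz{A}$. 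I would invoke hypoellipticity of the horizontal Laplacian (Hörmander's condition holds since $\Dist$ is bracket-generating) to guarantee that any such $\varphi$ is smooth, which legitimizes the pointwise manipulations below.

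For the forward implication I would start from a nonzero solution $\varphi$ of the first-order equation. Along any AC horizontal curve $\gamma$, horizontality gives $\diffH\varphi(\dot\gamma)=\diff\varphi(\dot\gamma)$ and $\horiz{A}(\dot\gamma)=A(\dot\gamma)$, so $t\mapsto\varphi(\gamma(t))$ solves the linear ODE $\tfrac{\diff}{\diff t}\varphi(\gamma(t))=-i\,A(\dot\gamma(t))\,\varphi(\gamma(t))$, whence $\varphi(\gamma(t))=\varphi(\gamma(0))\exp(-i\int_0^t A(\dot\gamma))$. In particular $|\varphi|$ is preserved along horizontal curves; since $\Dist$ is bracket-generating, the Chow--Rashevskii theorem joins any two points by a horizontal path, so $|\varphi|$ is a nonzero constant, normalized to $1$. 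Evaluating the transport formula on a closed horizontal loop yields $\exp(-i\int_\gamma A)=1$, i.e.\ $\int_\gamma A\in 2\pi\mathbb{Z}$. Finally, $|\varphi|=1$ gives $\horiz{A}=i\overline{\varphi}\,\diffH\varphi$; writing $\varphi=e^{i\psi}$ locally shows $\horiz{A}=-\diffH\psi$, and applying $\diffRm$ together with $\diffRm\diffH=0$ for the Rumin complex gives $\diffRm\horiz{A}=0$ (a local identity, hence global).

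For the converse I would assume $\diffRm\horiz{A}=0$ and the quantization condition. By local exactness of the Rumin complex, each point has a neighborhood on which $\horiz{A}=\diffH f$ for some smooth real $f$, so that $e^{-if}$ solves the first-order equation locally. To produce a global solution, fix a base point $x_0$ and set $\varphi(x):=\exp(-i\int_\gamma A)$ for a horizontal path $\gamma$ from $x_0$ to $x$; the quantization condition makes this independent of the path, since two such paths differ by a closed horizontal loop. Matching $\varphi$ against the local primitives shows it is smooth and satisfies $\diffH\varphi=-i\,\varphi\,\horiz{A}$, so $Q(\varphi)=0$ with $\varphi\neq0$, giving $\lambda_1(A)=0$.

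The step I expect to be the main obstacle is the well-posedness of this backward construction: the hypothesis $\diffRm\horiz{A}=0$ must be upgraded to genuine local primitives through a Poincar\'e lemma adapted to the \emph{Rumin} complex (rather than de Rham), and these primitives must be glued, via the holonomy condition, into a single-valued globally smooth $\varphi$. A secondary but pervasive point is the systematic replacement of ordinary path-connectivity used in the Riemannian statement \eqref{eq:shigekawa} by Chow--Rashevskii connectivity through horizontal curves, on which both the constancy of $|\varphi|$ and the global definition of $\varphi$ rely.
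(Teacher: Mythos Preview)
Your proposal is essentially correct and follows a route that differs in technique from the paper's, though both arguments rest on the same first-order reduction $\diffH\varphi+i\varphi\,\horiz{A}=0$ and both invoke hypoellipticity for smoothness.

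The main difference is that the paper works on the universal cover $\pi:\widetilde M\to M$ in \emph{both} directions, whereas you work directly with transport along horizontal curves. In the forward direction the paper does not integrate along $\gamma$; instead it observes (Lemma~\ref{lemma:reductionDiffRm}) that $\diffRm\horiz{A}=0$ is equivalent to $\diff(\horiz{A}-\delta(J\horiz{A})\alpha/n)=0$, hence on $\widetilde M$ one has a global smooth primitive $\widetilde f$ with $\widetilde{\horiz{A}}=\diffH\widetilde f$. A uniqueness argument then identifies $\varphi\circ\pi$ with $e^{-i\widetilde f}$ up to a phase, giving $\widetilde f\circ G=\widetilde f+2\pi k_G$ for every deck transformation $G$, and the flux condition follows by decomposing $\gamma$ into pieces on which $\pi$ is invertible. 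In the converse the paper again uses $\widetilde f$ and Chow--Rashevskii on the cover to recover the deck-equivariance, so that $e^{-i\widetilde f}$ descends. Your path-integral construction is more elementary and avoids the cover entirely; the price is that smoothness of $x\mapsto\exp(-i\int_\gamma A)$ must be argued separately. The paper in fact carries out exactly that smoothness argument in the proof of Theorem~\ref{theo:bounds}, using flows of a Darboux frame to differentiate $p\mapsto\int_{\gamma_{p_0,p}}\omega$, so your sketch can be completed in the same way.

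One point worth tightening: you invoke a ``Poincar\'e lemma adapted to the Rumin complex'' as the potential obstacle. In the paper this is not an independent ingredient but a consequence of Lemma~\ref{lemma:reductionDiffRm}: $\diffRm\horiz{A}=0$ forces a genuinely \emph{de~Rham}-closed form $\horiz{A}+h\alpha$ for some smooth $h$, after which the classical Poincar\'e lemma applies directly (locally, or globally on $\widetilde M$). Using this reduction makes your local-primitive step immediate and removes the need to appeal to Rumin exactness as a black box.
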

Here, $\diffRm$ is the Rumin differential (see Section~\ref{sec:RuminCohomolAbdDiff}). We stress that the technique of proof we employ to obtain the above result is novel and adapts verbatim to the Riemannian setting, yielding an alternative proof to the one in \cite{shigekawaEigenvalue1987}. See Section~\ref{sec:spectrum-shift}.

When the magnetic potential is of Aharonov--Bohm type, i.e., it is Rumin--harmonic, thus generally weaker than harmonic, we have the following generalization of \eqref{eq:colbois-savo}.

\begin{theorem}
Assume that $\horiz{A}\in \OmegaH^1(M)$ is Rumin--harmonic, then it holds that 
\begin{equation}
    \label{eq:bound-intro}
    \lambda_1(A)\le \frac{\operatorname{dist}(\horiz{A},\LH(M))^2}{\operatorname{vol}(M)}.
\end{equation}
Here, $\LH(M)$ is the lattice of Rumin--harmonic $1$--forms $\omega \in \OmegaH^1(M)$ with $\int_\gamma \omega \in 2\pi\mathbb{Z}$, for any AC loop $\gamma:\mathbb{S}^1\to M$ that is horizontal.
\end{theorem}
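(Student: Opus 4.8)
The plan is to bound $\lambda_1(A)$ from above through its variational characterization and a single, unit-modulus test function, following the strategy behind the Riemannian estimate \eqref{eq:colbois-savo}. Since $\DeltaRmA{A}$ is self-adjoint, non-negative and has discrete spectrum, its first eigenvalue is given by the Rayleigh quotient
\begin{equation*}
\lambda_1(A) = \inf_{\substack{u\in C^\infty(M)\\ u\not\equiv 0}} \frac{\int_M \norm{\diffH u + i u\,\horiz{A}}^2\,\diff\mathrm{vol}}{\int_M |u|^2\,\diff\mathrm{vol}},
\end{equation*}
so any admissible $u$ yields an upper bound, and the point is to select $u$ so that the numerator collapses to the $L^2$-norm of a lattice translate of $\horiz{A}$.

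Next I would construct the test function by gauging away the lattice part of $\horiz{A}$. The Aharonov--Bohm hypothesis, i.e.\ $\horiz{A}$ Rumin-harmonic, places $\horiz{A}$ in the finite-dimensional space of Rumin-harmonic $1$-forms containing the lattice $\LH(M)$, so the infimum defining $\operatorname{dist}(\horiz{A},\LH(M))$ is attained at some $\omega_0\in\LH(M)$ with $\|\horiz{A}-\omega_0\|_{L^2(M)} = \operatorname{dist}(\horiz{A},\LH(M))$. Since $\omega_0$ is Rumin-closed, $\diffRm\omega_0 = 0$, and has integer flux, $\int_\gamma\omega_0\in 2\pi\mathbb{Z}$ along every AC horizontal loop $\gamma$, the horizontal primitive $\phi_0(x) := \int_{x_0}^x \omega_0$, integrated along any horizontal path from a fixed $x_0$, is well-defined modulo $2\pi\mathbb{Z}$: any two such horizontal paths differ by a horizontal loop, and horizontal paths joining arbitrary points exist by the Chow--Rashevskii theorem. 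Hence $u_0 := e^{-i\phi_0}$ is a single-valued, unit-modulus function satisfying $\diffH u_0 = -i\,\omega_0\, u_0$. This is precisely the unitary gauge factor underlying our characterization of $\lambda_1(A)=0$, and I would reuse it verbatim, including the regularity argument guaranteeing $u_0\in C^\infty(M)$.

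Plugging $u_0$ into the Rayleigh quotient, we get $\diffH u_0 + i u_0\horiz{A} = i u_0(\horiz{A}-\omega_0)$, so that pointwise $\norm{\diffH u_0 + i u_0\horiz{A}}^2 = \norm{\horiz{A}-\omega_0}^2$ because $|u_0|\equiv 1$, while $\int_M |u_0|^2\,\diff\mathrm{vol} = \operatorname{vol}(M)$. This is the exact analog of using the constant test function after a Riemannian gauge transformation. Therefore
\begin{equation*}
\lambda_1(A) \le \frac{\int_M \norm{\horiz{A}-\omega_0}^2\,\diff\mathrm{vol}}{\operatorname{vol}(M)} = \frac{\|\horiz{A}-\omega_0\|_{L^2(M)}^2}{\operatorname{vol}(M)} = \frac{\operatorname{dist}(\horiz{A},\LH(M))^2}{\operatorname{vol}(M)},
\end{equation*}
which is exactly \eqref{eq:bound-intro}.

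The only genuinely non-routine step is the construction and regularity of the gauge function $u_0$: one must check that the horizontal primitive of an integer-flux Rumin-closed form is globally single-valued, a holonomy argument over horizontal loops combined with Chow connectivity, and, more delicately, that $\phi_0$ is smooth rather than merely horizontally differentiable, so that $u_0$ is an admissible element of the form domain of $\DeltaRmA{A}$. I expect this to follow from the same Rumin-complex machinery already deployed for the spectrum-shift theorem; everything else is the plain variational estimate with the constant-modulus test function.
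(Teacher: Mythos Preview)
Your proposal is correct and follows essentially the same route as the paper: fix $\omega\in\LH(M)$, build the unit-modulus test function $\varphi=e^{-ih}$ with $h(p)=\int_{\gamma_{p_0,p}}\omega$ along horizontal paths (well-defined mod $2\pi$ by the flux condition and Chow--Rashevskii), verify $\diffH\varphi+i\omega\varphi=0$, invoke the hypoelliptic regularity argument from the spectrum-shift theorem to get $\varphi\in C^\infty$, and plug into the Rayleigh quotient. The paper packages this as a special case of a more general bound (its Theorem~\ref{theo:bounds}), valid for arbitrary $A$ via the Rumin--Hodge decomposition $\horiz{A}=\xi\oplus(\diffH f+\deltaRm\eta)$ and gauge invariance; in your Rumin-harmonic case $\horiz{A}=\xi$ and the extra $\|\deltaRm\eta\|_{L^2}^2$ term vanishes, so your direct argument and the paper's coincide.
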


As is the case for \eqref{eq:colbois-savo}, the above result is a consequence of a more general theorem, i.e., Theorem~\ref{theo:bounds}, which holds for any magnetic potential $A$ and relies on a Hodge--type decomposition theorem extended to the Rumin complex. 

Our final result is a rigidity result for three--dimensional $K$--contact regular manifolds. These are contact manifolds such that the Reeb vector field is Killing, and all its orbits are closed with the same period. 
Therefore, our result is the following.

\begin{theorem} \label{theo:HeisenbergMetricIntro}
Let $M$ be a three--dimensional $K$--contact regular manifold with first Betti number $b_1(M)=2$. Then, $M$ is contactomorphic to an Heisenberg nilmanifold $\Gamma_k\setminus\mathbb{H}^1$, $k\in\mathbb{N}^*$, endowed with a metric $h$. Moreover, equality holds in bound \eqref{eq:bound-intro} if and only if $h$ is induced by the Heisenberg left--invariant metric (see Section \ref{sec:Heisenberg} for a definition).
\end{theorem}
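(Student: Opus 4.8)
The plan is to separate the statement into a \emph{topological classification}, identifying the diffeomorphism type of $M$, and a \emph{metric rigidity} part, characterizing the equality case. For the classification I would invoke the Boothby--Wang construction: since $M$ is $K$--contact and regular, the Reeb flow generates a free $\mathbb{S}^1$--action, presenting $M$ as a principal circle bundle $\pi\colon M\to B$ over a closed surface $B$, with $\alpha$ a connection form and $\diff\alpha=\pi^*\omega_B$ for an integral symplectic form $\omega_B$ whose class is the nonzero Euler class $e\in H^2(B;\mathbb{Z})$. Feeding this bundle into the Gysin sequence, the cup product $\cup\,e\colon H^0(B)\to H^2(B)$ is an isomorphism (it is multiplication by $e\neq 0$ between one--dimensional spaces), so $\pi^*\colon H^1(B)\to H^1(M)$ is an isomorphism and $b_1(M)=b_1(B)=2g$. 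The hypothesis $b_1(M)=2$ then forces $g=1$, i.e.\ $B=\mathbb{T}^2$. Circle bundles over $\mathbb{T}^2$ are classified by their Euler number $k\in\mathbb{Z}\setminus\{0\}$, and the bundle with Euler number $k$ carries a canonical contact structure contactomorphic to the Heisenberg nilmanifold $\Gamma_k\setminus\mathbb{H}^1$; pushing $h$ through this contactomorphism yields the first assertion.

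Next I would reduce the spectral problem to the base. Because $M$ is $K$--contact, $\Reeb$ is Killing and both $h$ and the almost--complex structure are Reeb--invariant, hence descend to a Riemannian metric $g_B$ and an integrable complex structure on $\mathbb{T}^2$; by construction the left--invariant Heisenberg metric is precisely the one whose descent $g_B$ is flat. Under $\pi$, Reeb--invariant functions and horizontal forms pulled back from the base are identified with their counterparts on $\mathbb{T}^2$, so the space of Rumin--harmonic $1$--forms and the lattice $\LH(M)$ map isometrically onto the $g_B$--harmonic $1$--forms and the integer--period lattice $\mathfrak{L}^1(\mathbb{T}^2)$, while $\DeltaRmA{A}$ restricted to pulled--back data reduces to the magnetic Laplacian on $(\mathbb{T}^2,g_B)$. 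This realizes bound \eqref{eq:bound-intro} as the Colbois--Savo bound \eqref{eq:colbois-savo} on the base.

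For the equality case I would trace the proof of \eqref{eq:bound-intro}: equality is attained exactly when the test function $u=e^{-i\phi}$, with $\diffH\phi=\omega$ for the lattice point $\omega\in\LH(M)$ nearest to $\horiz{A}$, is a genuine first eigenfunction of $\DeltaRmA{A}$. Setting $\eta\triangleq\horiz{A}-\omega$, the eigenvalue equation $\DeltaRmA{A}u=\lambda_1(A)u$ forces $\eta$ to be Rumin--harmonic (which it already is) and, crucially, to have \emph{constant pointwise horizontal norm} $\norm{\eta}^2\equiv\lambda_1(A)$. By the reduction above, $\eta$ descends to a $g_B$--harmonic $1$--form of constant norm on $\mathbb{T}^2$; a Weitzenböck identity gives $\int_{\mathbb{T}^2}\lvert\nabla\eta\rvert^2=-\int_{\mathbb{T}^2}\mathrm{Ric}(\eta,\eta)$, and since $\mathrm{Ric}=K g_B$ in dimension two with $\int_{\mathbb{T}^2}K=0$ by Gauss--Bonnet, the right--hand side vanishes. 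Thus $\eta$ is parallel, hence $K\equiv 0$ and $g_B$ is flat, equivalently $h$ is the left--invariant Heisenberg metric, exactly as in \cite[Theorem~4.2]{Colbois2017}. The converse follows by an explicit computation on $\Gamma_k\setminus\mathbb{H}^1$ with the left--invariant metric: decomposing $L^2$ by Fourier modes along the central fiber, the zero mode carries the flat magnetic Laplacian on $\mathbb{T}^2$, which saturates \eqref{eq:colbois-savo} for the deepest--hole potential, while the nonzero modes sit above $\lambda_1(A)$ by a Landau--type spectral gap.

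The main obstacle is the descent used in the last two steps: the ground state $u=e^{-i\phi}$ is not \emph{a priori} Reeb--invariant, so I must show that the extremal configuration genuinely decouples along the Boothby--Wang fibration—equivalently, that the first eigenfunction lies in the fiber--invariant (zero Fourier mode) sector, so that the constant norm of $\eta$ on $M$ descends to constant norm on $(\mathbb{T}^2,g_B)$ and $\DeltaRmA{A}$ matches the base Hodge Laplacian. Controlling this Reeb--direction contribution, and thereby ruling out fiberwise oscillation of the ground state, is where the $K$--contact (Reeb Killing) hypothesis must be used in full, and is the crux of the proof.
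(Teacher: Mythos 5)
Your proposal is correct in outline and coincides with the paper on the first two steps: the Boothby--Wang fibration plus the Gysin sequence to force $B=\mathbb{T}^2$ and identify $M$ with $\Gamma_k\setminus\mathbb{H}^1$ (this is Proposition~\ref{prop:contactomorphism}, except that you gloss over the Moser--trick argument needed to upgrade the bundle isomorphism to a \emph{strict} contactomorphism, which is what allows $h$ and the bound to be transported), and the identification of bound \eqref{eq:bound-intro} with the Colbois--Savo bound on the base (Corollary~\ref{corol:BoundBasis}). Where you genuinely diverge is the rigidity step. The paper (Theorem~\ref{theo:equalityGeneral3DContact}) transfers equality upstairs to equality downstairs by sandwiching: the Landau--quantization Theorem~\ref{theo:Landau} gives $\lambda_1(A)\le\lambda_1(A_B)$, \cite[Theorem~3]{Colbois2017} bounds $\lambda_1(A_B)$ above by the same quantity, and flatness of $\metricP$ is then quoted from \cite[Theorem~4.2]{Colbois2017} as a black box. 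You instead extract from the eigenfunction equation that $\eta=\horiz{A}-\omega$ has constant pointwise norm equal to $\lambda_1(A)$ --- which is exactly \eqref{eq:constantForm}, proved in the paper via Lemma~\ref{lemma:EigenFunc} --- descend $\eta$ to a $\metricB$--harmonic form of constant norm, and run Bochner plus Gauss--Bonnet. This is a legitimate alternative: it is more self--contained (it in effect re--proves the Colbois--Savo rigidity on the torus) and bypasses Theorem~\ref{theo:Landau} entirely in the forward direction.

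Two remarks. First, the ``crux'' you flag at the end dissolves: your route never needs the ground state to lie in the zero Fourier mode. The identity $\norm{\eta}^2\equiv\lambda_1(A)$ is a pointwise statement about the \emph{form} $\eta=\xi-\omega$, and both $\xi$ and $\omega$ are basic and pulled back from the base (Proposition~\ref{proposition:Gysin}), so constancy of $\norm{\eta}$ on $M$ passes to constancy of $\normB{\eta_B}$ on $B$ simply because $\pi$ is a Riemannian submersion --- no control of fiberwise oscillation of the eigenfunction is required (nor does the paper's sandwich argument require it). Second, your converse is off in one detail: on $\Gamma_k\setminus\mathbb{H}^1$ with the left--invariant metric, Corollary~\ref{corol:equalityHeisenberg} gives $\lambda_1(A)=\min\{\operatorname{dist}(a,2\pi k^{-1}\mathbb{Z})^2+\operatorname{dist}(b,2\pi\mathbb{Z})^2,\,1\}$, so the nonzero Fourier modes cap the eigenvalue at the Landau level $\mu(B,\Omega)=1$, and equality in \eqref{eq:bound-intro} \emph{fails} precisely for the deepest--hole potential (where the squared distance is $\pi^2/k^2+\pi^2>1$); the converse direction only holds for potentials sufficiently close to the lattice $\LH^1(M)$, a caveat worth making explicit.
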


Observe that the first part of this theorem is a consequence of the Boothby--Wang Theorem \cite{Geiges2008}. Importantly, unlike what happens in the purely Riemannian setting of \cite[Theorem 4.2]{Colbois2017} where the topology is completely determined by the genus 1 assumption, for contact manifolds the assumption on the Betti number does not fix the topology of $M$. In particular, there is a priori no knowledge of the Chern class $k$. Interestingly, as a consequence of Theorem \ref{theo:HeisenbergMetricIntro} and some explicit eigenvalue computations on Heisenberg nilmanifolds, this missing information can be fully determined from if $\lambda_1(A)$ is known for a sufficiently rich class of magnetic potentials $A$. 

\begin{corollary}
    Let $M$ be a three--dimensional K--contact regular manifold with contact 1--form $\alpha_M$ and first Betti number $b_1(M) = 2$. In addition, let $A^1, A^2 \in \Omega^1(M)$ be linearly independent, Rumin--harmonic, and such that equality holds in bound \eqref{eq:bound-intro} for either $A^1$ or $A^2$. 
    Then, there exists $\gamma_1,\gamma_2\in\mathbb{R}$ such that the Chern class $k \in \mathbb{N}\setminus\{0\}$ of $M$ is uniquely determined by the sequence $\{ \lambda_1( ( \gamma_1 A^1 + \gamma_2 A^2 ) / \ell ) : \ell \in \mathbb{N}^* \}$.
\end{corollary}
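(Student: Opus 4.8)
The plan is to reduce the problem to an explicit spectral computation on Heisenberg nilmanifolds $\Gamma_k\setminus\mathbb{H}^1$, where the Chern class $k$ enters as a scaling parameter in the contact structure. By Theorem~\ref{theo:HeisenbergMetricIntro}, the hypothesis that equality holds in \eqref{eq:bound-intro} for $A^1$ or $A^2$ forces $M$ to be contactomorphic to such a nilmanifold with the left--invariant Heisenberg metric $h$, so the entire setting becomes one where $\lambda_1$ can be computed explicitly via harmonic analysis (Fourier decomposition) on the nilmanifold. The first step is thus to invoke Theorem~\ref{theo:HeisenbergMetricIntro} to fix the geometry up to the unknown integer $k$, and to identify the lattice $\LH(M)$ of admissible Rumin--harmonic forms in terms of $k$.

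Next I would carry out the explicit eigenvalue computation. On $\Gamma_k\setminus\mathbb{H}^1$ the space of Rumin--harmonic $1$--forms is two--dimensional (consistent with $b_1=2$), spanned by the horizontal representatives of the two ``base'' directions; $A^1$ and $A^2$ being linearly independent means they span this space. I would write the magnetic horizontal Laplacian $\DeltaRmA{A}$ associated with a potential $A=(\gamma_1 A^1+\gamma_2 A^2)/\ell$ and diagonalize it using the Fourier decomposition of $L^2(\Gamma_k\setminus\mathbb{H}^1)$ adapted to the $\mathbb{S}^1$--fibration from the Boothby--Wang structure. The key point is that the first eigenvalue $\lambda_1((\gamma_1 A^1+\gamma_2 A^2)/\ell)$ will be an explicit function of $k$, of the integers $\ell$, and of the real fluxes $\gamma_1,\gamma_2$; crucially, the periodicity in the flux parameters (the $\operatorname{dist}(\cdot,\mathbb{Z})^2$--type behaviour seen already in \eqref{eq:improvement-sphere}) will depend on $k$ through the way the lattice $\LH(M)$ scales with the Chern class. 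The role of $\gamma_1,\gamma_2$ is to select a flux direction that is generic with respect to this $k$--dependent lattice, so that the resulting sequence in $\ell$ genuinely ``sees'' $k$.

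I would then show that the sequence $\{\lambda_1((\gamma_1 A^1+\gamma_2 A^2)/\ell):\ell\in\mathbb{N}^*\}$ determines $k$. The idea is that as $\ell$ varies, the flux $(\gamma_1,\gamma_2)/\ell$ sweeps through a sequence of points whose distances to the lattice $\LH(M)$ encode the lattice's covolume and shape, which are governed by $k$. Concretely, for suitably chosen $\gamma_1,\gamma_2$ the values $\lambda_1(\cdot/\ell)$ (or their asymptotics/pattern in $\ell$) distinguish different integers $k$ because distinct Chern classes yield distinct lattices $\LH$ and hence distinct distance sequences. I would argue that two nilmanifolds with Chern classes $k\neq k'$ cannot produce the same full sequence, by exhibiting at least one $\ell$ at which the two first eigenvalues differ, using the explicit formula.

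The main obstacle I anticipate is the explicit diagonalization of $\DeltaRmA{A}$ and the careful bookkeeping of how $k$ enters both the Fourier spectrum and the lattice $\LH(M)$: one must track the Chern class simultaneously in the geometry of the fibration (hence in the eigenvalue formula) and in the arithmetic normalization of the admissible--flux lattice, and then verify that these two appearances of $k$ do not conspire to cancel. A subtler point is the choice of the flux direction $(\gamma_1,\gamma_2)$: it must be made so that the map $k\mapsto\{\lambda_1(\cdot/\ell)\}_\ell$ is injective, which requires ruling out accidental coincidences between the distance sequences of different lattices — this is where a genericity or irrationality condition on $\gamma_1/\gamma_2$ relative to the lattice is likely needed, and establishing it rigorously is the technical heart of the argument.
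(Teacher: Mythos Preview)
Your reduction step is exactly right and matches the paper: invoke Theorem~\ref{theo:HeisenbergMetricIntro} (equivalently Theorem~\ref{theo:equalityGeneral3DContact} together with Proposition~\ref{prop:contactomorphism}) to identify $(M,\alpha_M,J)$ with the Heisenberg nilmanifold $\Gamma_k\setminus\mathbb{H}^1$ carrying the left--invariant metric, and note that $A^1,A^2$ span the two--dimensional space of Rumin--harmonic $1$--forms, which is $\operatorname{span}\{\diff x,\diff y\}$.

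Where you diverge is in the mechanism for recovering $k$ from the sequence. You anticipate needing a \emph{generic} or \emph{irrational} choice of $(\gamma_1,\gamma_2)$ and then comparing quantitative distance--to--lattice sequences for different $k$, flagging this as the technical heart. The paper's argument goes in the opposite direction: since $A^1,A^2$ form a basis of $\operatorname{span}\{\diff x,\diff y\}$, one chooses $\gamma_1,\gamma_2$ \emph{specifically} so that $\gamma_1 A^1+\gamma_2 A^2 = 2\pi\,\diff x$ under the contactomorphism. Then Corollary~\ref{corol:equalityHeisenberg} gives
\[
\lambda_1\!\left(\tfrac{2\pi}{\ell}\,\diff x\right)=\min\!\left\{\operatorname{dist}\!\left(\tfrac{2\pi}{\ell},\tfrac{2\pi}{k}\mathbb{Z}\right)^{\!2},\,1\right\},
\]
which vanishes iff $\ell\mid k$ (Corollary~\ref{corol:FindKHeisenbergNew}). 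Hence the set $\{\ell:\lambda_1=0\}$ is exactly the set of divisors of $k$, and $k$ is its maximum. No irrationality, no asymptotic comparison, no ruling out of accidental coincidences is needed.

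Your route could in principle be made to work, but the ``generic direction plus distance--sequence injectivity'' argument you sketch is both harder and unnecessary; the anticipated obstacle dissolves once one aligns the flux with a lattice direction and reads $k$ off from the vanishing pattern rather than from quantitative values.
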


\section{Preliminary Results}
\label{sec:Preliminaries}
\subsection{Contact, K--contact, and Sasakian manifolds}


The definitions and results below are taken from \cite{Geiges2008,Blair2010}. In this work, $M$ denotes a (real and smooth) compact and connected manifold of dimension $2 n + 1$, $n \in \mathbb{N}$. Forms and vector fields 
with values respectively in $\mathbb{C}$ and $T^{\mathbb{C}} M \simeq TM \oplus i TM$ are considered.

From now on, we assume the existence of a (real) 1--form $\alpha \in \Omega^1(M)$, called \textit{contact 1--form}, such that $\alpha \wedge (\diff \alpha)^n \neq 0$. 
This endows $M$ with a co--orientable contact structure and grants the existence of a unique vector field $\Reeb \in \Gamma(T M)$, called \textit{Reeb vector field}, such that $\alpha(\Reeb) = 1$ and $\diff \alpha(\Reeb,\cdot) = 0$. We denote by $\Dist \triangleq \text{ker} \ \alpha$ the distribution generated by $\alpha$.
We endow $M$ with a sub--Riemannian metric, i.e., a metric $h$ on the distribution $\Dist$. This defines a (1,1)--tensor\footnote{Throughout the manuscript, we implicitly identify bundle endomorphisms $E : T M \to T M$ with (1,1)--tensors through the isomorphism $E \mapsto T_E(\omega,X) \triangleq \omega(E X)$.} $J:TM\to TM$ by requiring that $JR=0$ and $h(v,w) = \diff \alpha(v,J w)$ for every $v , w \in \Dist$. We require $J$ to be an \textit{almost--complex structure}, i.e., additionally satisfying $J^2 v = -v$ for every $v \in \Dist$, so that $\diff \alpha(v , J w) = -\diff \alpha(J v , w)$ and $\diff \alpha(v , J v) > 0$ for every $v , w \in \Dist$. Since $J$ completely determines $h$, we henceforth refer to the sub--Riemannian contact structure on $M$ as being determined by $(\alpha,J)$.

Although the existence of $\alpha$ entails that at least one almost--contact structure exists, the choice of $\Jop$ is not unique. Any such $\Jop$ enables equipping $M$ with the \textit{adapted Riemannian metric} $\metric(\cdot,\cdot) \triangleq \alpha \otimes \alpha(\cdot,\cdot) + \diff \alpha(\cdot,\Jop \cdot)$, where the induced norm is denoted by $\norm{\cdot}$. For this reason, the tuple $(M,\alpha,\Jop)$ is called \textit{contact metric manifold}, henceforth referred to as \textit{contact manifold} for simplicity. Levi--Civita connections and musical isomorphisms will be computed with respect to $\metric$. A contact manifold is called \textit{K--contact} if its Reeb is Killing. A contact manifold $(M,\alpha,\Jop)$ is a \textit{CR--manifold} if the complex sub-bundle $\{ v - i \Jop v : v \in \Dist \} \subseteq T^{\mathbb{C}} M$ is involutive. CR--manifolds that are K-contact are called \textit{Sasakian}. In the rest of this work, $M$ denotes a generic contact manifold, implicitly equipped with a contact 1--form $\alpha$ and some almost--contact structure $\Jop$. Although $\alpha$ is assumed fixed, in this work $\Jop$ serves as a degree of freedom in the choice of 
$\metric$.

With no confusion arising, we still denote by $\metric$ the corresponding Hermitian metric on $T^{\mathbb{C}} M$. We denote by $\scalar{\cdot}{\cdot}: \Omega^{\bullet}(M) \times \Omega^{\bullet}(M) \to C^{\infty}(M,\mathbb{C})$ the scalar product on the external algebra of complex forms $\Omega^{\bullet}(M)$ induced by $\metric$, satisfying
$$
\scalar{\omega}{\eta} = \sum_{1 \le i_1 < \dots < i_k \le 2 n + 1} \omega(E_{i_1},\dots,E_{i_k}) \ \overline{\eta(E_{i_1},\dots,E_{i_k})} , \quad \omega , \eta \in \Omega^k(M) ,
$$
for any local $\metric$--orthonormal basis $\{ E_1 , \dots , E_{2 n + 1} \}$ of $T M$, thus $\scalar{\omega}{\eta} = \metric( \omega^{\sharp} , \eta^{\sharp} )$ in $\Omega^1(M)$. With no confusion arising, $\norm{\cdot}$ denotes the norm associated with $\scalar{\cdot}{\cdot}$.

Two kinds of local basis for $T M$ that are adapted to the contact structure are particularly useful. First, let $z$, $x_j$, and $y_j$, be local Darboux coordinates for $j=1,\dots,n$, for which it holds that $\Reeb = \partial_z$ and $\alpha = \diff z - \sum^n_{j=1} y_j \diff x_j$. By denoting $E_j \triangleq \partial_{x_j} + y_j \partial_z$ and $E_{j+n} \triangleq \partial_{y_j}$ for $j=1,\dots,n$, it holds that $\{ E_1 , \dots , E_{2 n} \}$ is a local basis for $\Dist$ such that $\diff \alpha = \sum^n_{j=1} E^*_j \wedge E^*_{j+n}$. By construction we have $[\Reeb , E_j] = 0$ and $R(E^*_j(R)) = 0$, for $j=1,\dots,2n$. 
We call $\{ R , E_1 , \dots , E_{2n} \}$ a \textit{Darboux frame}.
Second, it is always possible to select local $\metric$--orthonormal basis $\{ E_1 , \dots , E_{2 n} \}$ of $\Dist$ that are adapted to the almost–contact structure, i.e., satisfying $\Jop E_j = E_{j+n}$ and $\Jop E_{j+1} = -E_j$ for $j=1,\dots,n$, so that $\{ \Reeb , E_1 , \dots , E_{2 n} \}$ is a local $\metric$--orthonormal basis of $T M$. We call $\{ \Reeb , E_1 , \dots , E_{2 n} \}$ an \textit{almost--contact frame}. Note that $\diff \alpha = \sum^n_{j=1} E^*_j \wedge E^*_{j+n}$. 

Any cover $\pi : \widetilde M \to M$ of a contact manifold is a contact manifold.

\begin{proposition} \label{Prop:contactCovering}
    Let $\pi : \widetilde M \to M$ be a cover of $M$. Equipped with $\widetilde \alpha \triangleq \pi^*\alpha$ as contact 1--form, $\widetilde M$ becomes a contact manifold (associated with some $\widetilde \Jop$). The corresponding Reeb $\ReebT \in \Gamma(\widetilde M)$ satisfies $\Reeb(\pi(p)) = \diff \pi(p) \cdot \ReebT(p)$, for every $p \in \widetilde M$.
\end{proposition}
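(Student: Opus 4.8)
The plan is to exploit that a covering map is in particular a local diffeomorphism, so that $\diff\pi(p) : T_p \widetilde M \to T_{\pi(p)} M$ is a linear isomorphism at every point $p \in \widetilde M$, and then to transport all of the contact data from $M$ to $\widetilde M$ by pullback.

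First I would check that $\widetilde\alpha = \pi^*\alpha$ is a contact form. Since pullback commutes with both $\diff$ and $\wedge$, one has
\begin{equation*}
\widetilde\alpha \wedge (\diff\widetilde\alpha)^n = \pi^*\alpha \wedge (\pi^*\diff\alpha)^n = \pi^*\bigl( \alpha \wedge (\diff\alpha)^n \bigr).
\end{equation*}
Because $\alpha \wedge (\diff\alpha)^n$ is a nowhere--vanishing top form on $M$ and $\diff\pi(p)$ is an isomorphism for every $p$, its pullback is again nowhere--vanishing; hence $\widetilde\alpha$ is a contact 1--form.

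Next I would produce the almost--complex structure by conjugation. Observe that $v \in \widetilde{\Dist} \triangleq \ker\widetilde\alpha$ iff $\alpha(\diff\pi(v)) = 0$, so $\diff\pi(p)$ maps $\widetilde{\Dist}_p$ isomorphically onto $\Dist_{\pi(p)}$. I would then set $\widetilde\Jop \triangleq (\diff\pi)^{-1}\circ \Jop \circ \diff\pi$ on $\widetilde{\Dist}$ (and $\widetilde\Jop\, \ReebT = 0$ once the Reeb is identified). The relation $\widetilde\Jop{}^2 = -\mathrm{id}$ on $\widetilde{\Dist}$ is immediate from $\Jop^2 = -\mathrm{id}$, and for the compatibility with $\diff\widetilde\alpha$ I would compute, for $v,w\in\widetilde{\Dist}$,
\begin{equation*}
\diff\widetilde\alpha(v,\widetilde\Jop w) = \diff\alpha\bigl(\diff\pi\, v,\ \Jop\, \diff\pi\, w\bigr),
\end{equation*}
using $\diff\pi\,\widetilde\Jop v = \Jop\,\diff\pi\, v$. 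Thus the skew--symmetry $\diff\widetilde\alpha(v,\widetilde\Jop w) = -\diff\widetilde\alpha(\widetilde\Jop v, w)$ and the positivity $\diff\widetilde\alpha(v,\widetilde\Jop v) > 0$ for $v\neq 0$ descend directly from the corresponding properties of $\Jop$ on $M$, since $\diff\pi$ is a pointwise isomorphism. This exhibits $(\widetilde M, \widetilde\alpha, \widetilde\Jop)$ as a contact manifold, with induced metric the pullback of $\metric$.

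Finally, for the Reeb relation I would argue by uniqueness. Define the vector field $X$ on $\widetilde M$ that is $\pi$--related to $\Reeb$, namely $X(p) \triangleq (\diff\pi(p))^{-1}\Reeb(\pi(p))$, so that $\diff\pi(p)\cdot X(p) = \Reeb(\pi(p))$ by construction. Then
\begin{equation*}
\widetilde\alpha(X) = \alpha(\diff\pi\, X) = \alpha(\Reeb) = 1, \qquad \diff\widetilde\alpha(X,\cdot) = \diff\alpha(\Reeb, \diff\pi\,\cdot) = 0,
\end{equation*}
so $X$ satisfies both defining conditions of the Reeb field of $\widetilde\alpha$. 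By uniqueness of the Reeb vector field, $\ReebT = X$, which is precisely the claimed identity $\Reeb(\pi(p)) = \diff\pi(p)\cdot\ReebT(p)$. I do not anticipate a genuine obstacle: the entire argument rests on $\diff\pi$ being a fiberwise isomorphism. The only point deserving a little care is that the pointwise constructions of $\widetilde\Jop$ and of $X$ assemble into smooth global objects, which holds because $\pi$ is a smooth local diffeomorphism and the data on $M$ are smooth.
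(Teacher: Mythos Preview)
Your argument is correct and follows essentially the same route as the paper: the contact condition is verified via $\widetilde\alpha\wedge(\diff\widetilde\alpha)^n=\pi^*(\alpha\wedge(\diff\alpha)^n)$, and the Reeb relation is obtained from the uniqueness characterization. The only cosmetic difference is the direction of the Reeb verification---you lift $R$ to $\widetilde M$ and check it satisfies the Reeb conditions for $\widetilde\alpha$, whereas the paper pushes $\widetilde R$ down and checks it satisfies the Reeb conditions for $\alpha$---and you spell out the construction of $\widetilde J$ explicitly, which the paper leaves implicit.
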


\begin{proof}
    The first claim readily follows from $\widetilde \alpha \wedge (\diff \widetilde \alpha)^n = \pi^* \big( \diff \alpha \wedge (\diff \alpha)^n \big)$, whereas the second claim is straightforward consequence of
    \begin{align*}
        \diff \alpha(\pi(p))( \diff \pi(p) \cdot \widetilde \Reeb(p) , v ) &= \diff \widetilde \alpha(p)( \widetilde \Reeb(p) , \diff \pi(p)^{-1} \cdot v ) \\
        &= 0 = \diff \alpha(\pi(p))( \Reeb(\pi(p)) , v ) , \quad p \in \widetilde M , \ v \in T_{\pi(p)} M .
    \end{align*}
\end{proof}

\noindent An \textit{Absolutely Continuous (AC)} curve $\gamma : [0,1] \to M$ is \textit{horizontal} if $\dot{\gamma}(t) \in \Dist(\gamma(t))$.

\begin{remark} \label{remark:horizComplete}
    Any couple of points $p , q \in M$ can always be joined by an AC horizontal curve, denoted $\gamma_{p,q} : [0,1] \to M$, as a consequence of Chow--Rashevskii's theorem \cite{Agrachev2003}. This can be seen by arguing locally, selecting, e.g., an almost--contact frame $\{ \Reeb , E_1 , \dots , E_{2 n} \}$ defined on a neighborhood $U \subseteq M$. For every $p , q \in U$, the objective boils down to showing the following control problem has 
    a solution:
    $$
    \begin{array}{c}
        \textnormal{Find $u \in L^2([0,1],\mathbb{R}^{2 n})$ such that the solution $\gamma : [0,1] \to M$} \\
        \textnormal{to $\displaystyle \dot{\gamma}(t) = \sum^{2 n}_{j=1} u_j(t) E_j(\gamma(t))$ satisfies $\gamma(0) = p$ and $\gamma(1) = q$.}
    \end{array}
    $$
    But $T M|_U = \textnormal{Lie}(E_1,\dots,E_{2 n})$, since $[E_i,E_{i+n}] \notin \Dist$, $i=1,\dots,2n$, and we conclude.
\end{remark}

\subsection{The horizontal differential}

We denote
$$
\OmegaH^k(M) \triangleq \Big\{ \omega \in \Omega^k(M) : \ \iota_{\Reeb} \omega = 0 \Big\} \subseteq \Omega^k(M)
$$
the sub-space of \textit{horizontal $k$--forms}, whereas we denote $\Jop \omega \triangleq \omega \circ \Jop^{(k)}$ for $\omega \in \Omega^k(M)$, where $\Jop^{(k)} \triangleq J \otimes \dots \otimes J$ is the $k$--tensor product. In particular, it holds that $\Jop \omega = (\Jop \omega^{\sharp})^{\flat}$ for $\omega \in \Omega^1(M)$. Any $\omega \in \OmegaH^k(M)$ satisfying $\mathcal{L}_{\Reeb} \omega = 0$ is called \textit{basic}. Let us denote $\Pi(v) \triangleq v - \alpha(v) \Reeb \in \Dist$, $v \in TM$, and $\Pi^{(k)} \triangleq \Pi \otimes \dots \otimes \Pi$ the $k$--tensor product, and define $\horiz{\omega} \triangleq \omega \circ \Pi^{(k)} \in \OmegaH^k(M)$ for $\omega \in \Omega^k(M)$. The identity $\horiz{\omega} = \omega - \alpha \wedge \iota_{\Reeb} \omega$ is readily seen to hold by induction, due to exterior product identities for 1--forms:
\begin{align*}
    \horiz{\omega}(X_1,\dots,X_k) &= ( \iota_{X_1} \omega - \alpha(X_1) \iota_{\Reeb} \omega ) \circ \Pi^{(k-1)}(X_2,\dots,X_k) \\
    &= \omega(X_1,\dots,X_k) - ( \alpha \wedge \iota_{\Reeb} \iota_{X_1} \omega + \alpha(X_1) \iota_{\Reeb} \omega )(X_2,\dots,X_k) \\
    &= ( \omega - \alpha \wedge \iota_{\Reeb} \omega )(X_1,\dots,X_k) , \qquad \textnormal{for} \qquad X_1 , \dots , X_k \in \Gamma(T M) .
\end{align*}

We define the \textit{horizontal differential} as $\diffH \omega \triangleq \horiz{(\diff \omega)} \in \OmegaH^{k+1}(M)$ for $\omega \in \Omega^k(M)$.

\begin{lemma} \label{lemma:zeroHorizDiff}
    For every $\omega \in \Omega^k(M)$, it holds that $\diff \omega = 0$ iff $\diffH \omega = 0$.
\end{lemma}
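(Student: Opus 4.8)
The easy implication is immediate: if $\diff\omega=0$ then $\diffH\omega=\horiz{(\diff\omega)}=0$. The whole content is the converse, and the plan is to exploit the pointwise splitting of a form into its horizontal part and its $\alpha$--part. Applying the identity $\horiz{\beta}=\beta-\alpha\wedge\iota_{\Reeb}\beta$ established above to $\beta=\diff\omega$ yields the decomposition $\diff\omega=\diffH\omega+\alpha\wedge\iota_{\Reeb}\diff\omega$, whose two summands lie in the complementary subspaces $\OmegaH^{k+1}(M)$ and $\alpha\wedge\OmegaH^{k}(M)$ (a horizontal form that equals $\alpha\wedge\mu$ forces $\mu=\iota_{\Reeb}(\alpha\wedge\mu)=0$). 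Hence, assuming $\diffH\omega=0$ reduces the claim to showing that the remaining term vanishes, i.e.\ that $\gamma\triangleq\iota_{\Reeb}\diff\omega\in\OmegaH^{k}(M)$ is zero.

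The key step is to feed the closedness $\diff\diff\omega=0$ back into this decomposition. Since $\diffH\omega=0$ gives $\diff\omega=\alpha\wedge\gamma$, we get
\[
0=\diff\diff\omega=\diff\alpha\wedge\gamma-\alpha\wedge\diff\gamma .
\]
I would again split into horizontal and $\alpha$--parts: because $\iota_{\Reeb}\diff\alpha=0$ and $\iota_{\Reeb}\gamma=0$, the form $\diff\alpha\wedge\gamma$ is horizontal, while $\alpha\wedge\diff\gamma$ sits in $\alpha\wedge\OmegaH(M)$; as these subspaces meet only in $0$, both terms must vanish separately, so in particular $\diff\alpha\wedge\gamma=0$. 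In the degree $k=0$ prototype this already concludes: there $\gamma=\Reeb f$ is a function, the relation reads $\diff\gamma\wedge\alpha+\gamma\,\diff\alpha=0$, and wedging with $\alpha\wedge(\diff\alpha)^{n-1}$ kills the first term while turning the second into $\gamma\,\alpha\wedge(\diff\alpha)^n$, so the contact nondegeneracy $\alpha\wedge(\diff\alpha)^n\neq0$ forces $\gamma=0$ and hence $\diff f=0$.

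For general degree $k$ the vanishing $\diff\alpha\wedge\gamma=0$ must be upgraded to $\gamma=0$ through the Lefschetz--type injectivity of the operator $\gamma\mapsto\diff\alpha\wedge\gamma$ on horizontal forms, and this is exactly where the contact hypothesis is essential: $\diff\alpha$ restricts to a nondegenerate (symplectic) $2$--form on the rank--$2n$ bundle $\Dist$, so wedging with it is injective on $\OmegaH^{k}(M)$ in the relevant degree range. I expect this injectivity to be the main obstacle and the real crux of the proof, since it is a fibrewise statement in symplectic linear algebra; the cleanest route is to work in an almost--contact frame, where $\diff\alpha=\sum_{j}E^*_j\wedge E^*_{j+n}$, and to invoke the hard Lefschetz structure carried by $\diff\alpha$ on $\Dist$. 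Once $\gamma=0$ is secured we obtain $\diff\omega=\alpha\wedge\gamma=0$, closing the argument.
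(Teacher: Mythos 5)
Your strategy is essentially the paper's: split $\diff\omega=\diffH\omega+\alpha\wedge\gamma$ with $\gamma=\iota_{\Reeb}\diff\omega$, feed $\diff\diff\omega=0$ back into this decomposition, and reduce the converse to the injectivity of the Lefschetz map $L=\diff\alpha\wedge\cdot$ on $\OmegaH^k(M)$. (The paper does the same thing in a frame: it tests $\diff\alpha\wedge\gamma$ against an unused conjugate pair $\{E_i,E_{i+n}\}$ to extract the coefficient $\gamma(E_{i_1},\dots,E_{i_k})$, which is this injectivity in disguise.) Your splitting steps are all correct, and your $k=0$ argument closes cleanly.

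However, the step you flag as "the real crux" is a genuine gap, and it cannot be filled in the generality claimed: on a symplectic vector space of rank $2n$, the map $L:\Lambda^k\to\Lambda^{k+2}$ is injective only for $k\le n-1$; for $k\ge n$ it has a nontrivial kernel (the primitive forms), so $\diff\alpha\wedge\gamma=0$ does not force $\gamma=0$. Indeed the lemma as stated, i.e.\ for every $k$, is false in that range. For $n=1$, $k=1$, take Darboux coordinates with $\alpha=\diff z-y\,\diff x$ and $\omega=\sin(2\pi x)\,\diff y-2\pi\cos(2\pi x)\,\alpha$ (which descends to the compact Heisenberg nilmanifold, so compactness does not rescue the statement): one computes $\diff\omega=4\pi^2\sin(2\pi x)\,\diff x\wedge\alpha\neq0$, while $\iota_{\Reeb}(\diff x\wedge\alpha)=-\diff x$ gives $\horiz{(\diff x\wedge\alpha)}=0$ and hence $\diffH\omega=0$. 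The paper's own proof carries the same hidden restriction (it needs a conjugate pair $\{E_i,E_{i+n}\}$ disjoint from the evaluation indices, and no conjugate pairs among them, which is impossible for all multi-indices once $k\ge n$); the lemma is only ever invoked there for $k=0$ (any $n$) and for $k=1$ with $n>1$, i.e.\ within the range $k\le n-1$ where your Lefschetz step, and hence your argument, is valid.
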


\begin{proof}
    By restricting our analysis to a local neighborhood, let $\{ \Reeb , E_1 , \dots , E_{2 n} \}$ be, e.g., an almost--contact frame and assume that $\diffH \omega = 0$. Differentiating and evaluating $\diffH \omega = \diff \omega - \alpha \wedge \iota_{\Reeb} \diff \omega$ along $\{ E_i , E_{i+n} , E_{i_1} , \dots , E_{i_{k-1}} \}$, for $i , i+n \neq i_1 , \dots , i_{k-1}$, yields $\diff \omega(\Reeb , E_{i_1} , \dots , E_{i_{k-1}}) = \iota_{\Reeb} \diff \omega(E_{i_1} , \dots , E_{i_{k-1}}) = 0$, and the conclusion follows.
\end{proof}

\subsection{A primer on Rumin cohomology} \label{sec:RuminCohomolAbdDiff}

We recall a few 
key concepts on the \textit{Rumin cohomology} \cite{Rumin1994}, a natural extension of the de Rham cohomology in the contact case. Since we are 
interested in real--valued 1--forms, for the sake of conciseness we only describe up to the second Rumin complex: 
\begin{equation} \label{eq:complexRumin}
    \OmegaRm^0(M) \overset{\diffRm}{\longrightarrow} \OmegaRm^1(M) \overset{\diffRm}{\longrightarrow} \OmegaRm^2(M) .
\end{equation}
Here, $\OmegaRm^0(M)$ coincides with the usual 0--forms, i.e., functions in $C^{\infty}(M,\mathbb{R})$, whereas $\OmegaRm^1(M)$ is made of equivalence classes $\classRm{\omega}$ stemming from the relation
$$
\omega_1 , \omega_2 \in \Omega^1(M) : \quad \omega_1 \equivRm \omega_2 \quad \Longleftrightarrow \quad \omega_2 = \omega_1 + f \alpha , \ f \in C^{\infty}(M,\mathbb{R}) .
$$
The operator $\diffRm : \OmegaRm^0(M) \to \OmegaRm^1(M)$ corresponds to the classical differential, i.e., $\diffRm \classRm{f} \triangleq \classRm{\diff f}$ for $\classRm{f} \in \OmegaRm^0(M)$. The main novelty of the Rumin cohomology is the operator $\diffRm : \OmegaRm^1(M) \to \OmegaRm^2(M)$, whose definition depends on $n$. Specifically, if $n > 1$, then $\OmegaRm^2(M)$ is made of equivalence classes $\classRm{\eta}$:
$$
\eta_1 , \eta_2 \in \Omega^2(M) : \ \eta_1 \equivRm \eta_2 \ \Longleftrightarrow \ \eta_2 = \eta_1 + \theta \wedge \alpha + f \diff \alpha , \ \theta \in \Omega^1(M) , \ f \in C^{\infty}(M,\mathbb{R}) ,
$$
in which case $\diffRm : \OmegaRm^1(M) \to \OmegaRm^2(M)$ corresponds to the classical differential, i.e., $\diffRm \classRm{\omega} \triangleq \classRm{\diff \omega}$ for $\classRm{\omega} \in \OmegaRm^1(M)$. However, in the case of $n = 1$, the space $\OmegaRm^2(M)$ does not contain equivalence classes but forms $\eta \in \Omega^2(M)$ such that $\eta \wedge \alpha = 0$, and the differential operator $\diffRm : \OmegaRm^1(M) \to \OmegaRm^2(M)$ is defined by $\diffRm \classRm{\omega} \triangleq \diff( \omega + f_{\omega} \alpha )$ for $\classRm{\omega} \in \OmegaRm^1(M)$, where $f_{\omega} \in C^{\infty}(M,\mathbb{R})$ is uniquely such that $\diff( \omega + f_{\omega} \alpha ) \in \OmegaRm^2(M)$. The complex \eqref{eq:complexRumin} naturally induces cohomological classes that are isomorphic to the classical de Rham cohomological classes of $M$.

The equivalence classes $\OmegaRm^1(M)$ and $\OmegaRm^2(M)$ (in the case of $n > 1$) can be identified to sub-spaces of ``true'' forms. For this, let us denote by
$$
L : \OmegaH^k(M) \to \OmegaH^{k+2}(M) : \omega \mapsto \diff \alpha \wedge \omega , \quad k \ge 0 ,
$$
the Lefschetz operator and by $\Lambda : \OmegaH^{k+2}(M) \to \OmegaH^k(M)$ its pointwise adjoint relative to the metric $\scalar{\cdot}{\cdot}(p)$, for every $p \in M$. The following identifications, where equivalence classes are replaced by specific representative, are thus natural:
$$
\OmegaRm^1(M) \sim \{ \horiz{\omega} \in \OmegaH^1(M) : \ \omega \in \Omega^1(M) \}
$$
and
$$
\OmegaRm^2(M) \sim \begin{cases}
    \left\{ \displaystyle \horiz{\eta} - \frac{L \Lambda \horiz{\eta}}{n} \in \OmegaH^2(M) \cap \text{ker} \ \Lambda : \ \eta \in\Omega^2(M) \right\} , \quad n > 1 , \\
    \hspace{0.5ex} \{ \eta \in \Omega^2(M) : \ \eta \wedge \alpha = 0 \} , \hspace{24.25ex} n = 1 .
\end{cases}
$$
In particular, we may also identify, for every $f \in \OmegaRm^0(M)$ and $\omega \in \OmegaRm^1(M)$,
$$
\diffRm \classRm{f} \sim \diffRm f \triangleq \diffH f  \quad \text{and} \quad \diffRm \classRm{\omega} \sim \diffRm \omega \triangleq \begin{cases}
    \displaystyle \diffH \omega - \frac{L \delta (\Jop \omega)}{n} , \ n > 1 , \\
    \diff( \omega - \delta (\Jop \omega) \alpha ) , \hspace{1.25ex} n = 1 ,
\end{cases}
$$
using the fact that $\Lambda \diffH \omega = \delta(\Jop \omega)$, whether $n > 1$ or $n = 1$ \cite{Rumin1994}. A quick, stand--alone proof of this latter identity goes as follows. First of all, note that $\nabla_{\Reeb} X$ is horizontal for every horizontal $X \in \Gamma(T M)$. Indeed, Cartan's formula yields $\mathcal{L}_{\Reeb} \alpha = 0$, hence $\alpha( [\Reeb,X] ) = -\mathcal{L}_{\Reeb} \alpha(X) = 0$. Since $\nabla_X \Reeb = -\Jop X - \Jop h_R(X)$ with $h_R = \mathcal{L}_{\Reeb} J / 2$ holds in any contact manifold, the sought claim follows from $\nabla_{\Reeb} X = \nabla_X \Reeb + [\Reeb , X]$. Next, by restricting our analysis to a local neighborhood, let $\{ \Reeb , E_1 , \dots , E_{2 n} \}$ be an almost--contact frame. We may conclude by computing
\begin{align*}
    -\delta \omega &= \sum^{2n}_{\ell=1} g_{\alpha}( \nabla_{E_{\ell}} \omega^{\sharp} , E_{\ell} ) + g_{\alpha}( \nabla_{\Reeb} \omega^{\sharp} , {\Reeb} ) = \sum^{2n}_{\ell=1} \diff \alpha( [E_{\ell} , \omega^{\sharp}] , \Jop E_{\ell} ) \\
    &= \sum^n_{\ell=1} \alpha( [E_{\ell+n} , [E_{\ell} , \omega^{\sharp}]] + [E_{\ell} , [\omega^{\sharp} , E_{\ell+n}]] ) - \sum^{2n}_{\ell=1} \Jop E_{\ell}(\alpha( [E_{\ell},\omega^{\sharp}])) \\
    &= \sum^n_{\ell=1} \Big( \omega^{\sharp}( \alpha([E_{\ell} , E_{\ell+n}]) ) - \diff \alpha( \omega^{\sharp} , [E_{\ell} , E_{\ell+n} ] ) \Big) - \sum^{2n}_{\ell=1} \Jop E_{\ell}(\alpha( [E_{\ell},\omega^{\sharp}])) \\
    &= \sum^n_{\ell=1} \Big( E_{\ell+n}( \diff \alpha( E_{\ell} , \omega^{\sharp}) ) - E_{\ell}( \diff \alpha( E_{\ell+n} , \omega^{\sharp} ) - \diff \alpha( \omega^{\sharp} , [E_{\ell} , E_{\ell+n}] ) \Big) \\
    &= \sum^n_{\ell=1} \Big( E_{\ell}( \Jop \omega ( E_{\ell+n} ) ) - E_{\ell+n}( \Jop \omega( E_{\ell} ) ) - \Jop \omega( [E_{\ell} , E_{\ell+n} ] ) \Big) 
    = \Lambda \diffH (\Jop \omega) .
\end{align*}
All the previous identifications, assumed to hold as equivalences throughout this work, enable introducing $\deltaRm : \OmegaRm^{k+1}(M) \to \OmegaRm^k(M)$, for $k = 0,1$, adjoint operator of $\diffRm$ with respect to the metric $(\cdot,\cdot)_{L^2}$ on $\Omega^{\bullet}(M)$ (induced by the metric $\scalar{\cdot}{\cdot}$). We say a horizontal 1--form $\omega \in \OmegaH^1(M)$ is \textit{Rumin--harmonic} if $\diffRm \omega = 0$ and $\deltaRm \omega = 0$.

\begin{lemma} \label{lemma:reductionDiffRm}
    Let $\omega \in \OmegaH^1(M)$. It holds that $\deltaRm \omega = \delta \omega$. 
    Moreover, $\diffRm \omega = 0$ iff $\displaystyle \diff( \omega - \delta( \Jop \omega ) \alpha / n ) = 0$. If in addition $\omega$ is basic, then $\diffRm \omega = 0$ iff $\diff \omega = 0$, therefore $\omega$ basic is Rumin--harmonic iff it is harmonic (in the classical sense).
\end{lemma}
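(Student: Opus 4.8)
The plan is to dispatch the three assertions in sequence, in each case reducing to the identity $\Lambda\diffH\omega=\delta(\Jop\omega)$ established above, to Lemma~\ref{lemma:zeroHorizDiff}, and to the fiberwise injectivity of the Lefschetz operator $L$ on the symplectic bundle $(\Dist,\diff\alpha)$. For the first assertion I would argue by $L^2$--duality. Since $\deltaRm:\OmegaRm^1(M)\to\OmegaRm^0(M)$ is the adjoint of $\diffRm:\OmegaRm^0(M)\to\OmegaRm^1(M)$ and $\diffRm f=\diffH f$ on functions, it suffices to compute, for every $f\in C^\infty(M,\mathbb{R})$, the pairing $(\diffH f,\omega)_{L^2}$. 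The key observation is that $\diffH f=\diff f-(\Reeb f)\alpha$ while $\omega$ is horizontal, so that $\scalar{\alpha}{\omega}=0$ pointwise; hence $\scalar{\diffH f}{\omega}=\scalar{\diff f}{\omega}$ and $(\diffH f,\omega)_{L^2}=(\diff f,\omega)_{L^2}=(f,\delta\omega)_{L^2}$ by the usual $\diff$--$\delta$ adjunction on $(M,\metric)$. This yields $\deltaRm\omega=\delta\omega$.

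For the second assertion the case $n=1$ is immediate, being the very definition $\diffRm\omega=\diff(\omega-\delta(\Jop\omega)\alpha)$ together with $\delta(\Jop\omega)/n=\delta(\Jop\omega)$. So I would concentrate on $n>1$, where $\diffRm\omega=\diffH\omega-L\delta(\Jop\omega)/n$; set $f\triangleq\delta(\Jop\omega)/n$. The implication ``$\Leftarrow$'' is a rewriting: $\diff(\omega-f\alpha)=0$ gives $\diff\omega=\diff f\wedge\alpha+f\diff\alpha$, whose Rumin class vanishes. For ``$\Rightarrow$'', $\diffRm\omega=0$ means $\diffH\omega=f\diff\alpha$ (the coefficient being forced by applying $\Lambda$ and using $\Lambda\diffH\omega=\delta(\Jop\omega)$ and $\Lambda\diff\alpha=n$). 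Using $\diff\omega=\diffH\omega+\alpha\wedge\iota_{\Reeb}\diff\omega$ I then obtain $\diff(\omega-f\alpha)=\psi\wedge\alpha$ for a horizontal $1$--form $\psi$. This form is exact, hence closed; taking $\iota_{\Reeb}$ of $\diff(\psi\wedge\alpha)=0$ shows $\diffH\psi=0$, so $\diff\psi=0$ by Lemma~\ref{lemma:zeroHorizDiff}, whereupon $\diff(\psi\wedge\alpha)=0$ collapses to $\psi\wedge\diff\alpha=L\psi=0$.

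The crux, and the step I expect to be the main obstacle, is turning the weak quotient condition $\classRm{\diff\omega}=0$ into the genuine closedness $\diff(\omega-f\alpha)=0$: this is exactly where $L\psi=0$ must force $\psi=0$. The needed input is that for $n>1$ the Lefschetz operator $L:\OmegaH^1(M)\to\OmegaH^3(M)$ is fiberwise injective, which is standard symplectic linear algebra on the rank--$2n$ symplectic bundle $(\Dist,\diff\alpha)$ and fails precisely at $n=1$ — the reason that case requires its own definition. Granting this, $\psi=0$ and $\diff(\omega-f\alpha)=0$.

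For the third assertion, if $\omega$ is basic then $\mathcal{L}_{\Reeb}\omega=\iota_{\Reeb}\diff\omega=0$, so $\diff\omega=\diffH\omega$ is horizontal. The direction $\diff\omega=0\Rightarrow\diffRm\omega=0$ is clear, since then $\delta(\Jop\omega)=\Lambda\diffH\omega=0$. Conversely, by the second assertion $\diffRm\omega=0$ gives $\diff\omega=\diff(f\alpha)$; as $\diff\omega$ is horizontal, the vertical part $\alpha\wedge\iota_{\Reeb}\diff(f\alpha)$ must vanish, and since $\iota_{\Reeb}\diff(f\alpha)=(\Reeb f)\alpha-\diff f$ this forces $\alpha\wedge\diff f=0$, i.e. $\diffH f=0$, hence $\diff f=0$ by Lemma~\ref{lemma:zeroHorizDiff} and $f$ is constant. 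A Stokes argument then kills the constant: pairing $\diff\omega=f\diff\alpha$ with $\alpha\wedge(\diff\alpha)^{n-1}$ and integrating gives $f\int_M\alpha\wedge(\diff\alpha)^n=\int_M\omega\wedge(\diff\alpha)^n=0$, the right-hand side vanishing for degree reasons (a product of horizontal forms of degree $2n+1$ annihilates $\Reeb$ and is top--degree, hence zero); since the contact volume $\int_M\alpha\wedge(\diff\alpha)^n\neq0$, we conclude $f=0$ and $\diff\omega=0$. Finally, combining with the first assertion, a basic $\omega$ is Rumin--harmonic, i.e. $\diffRm\omega=\deltaRm\omega=0$, if and only if $\diff\omega=\delta\omega=0$, that is, harmonic in the classical sense.
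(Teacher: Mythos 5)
Your proof is correct, and on the first claim it coincides with the paper's one--line duality computation. Where you genuinely diverge is in the second claim for $n>1$: the paper's proof rests on the pointwise identity $\diffH(f\alpha)=f\,\diff\alpha$, which gives $\diffH\bigl(\omega-\delta(\Jop\omega)\alpha/n\bigr)=\diffH\omega-\tfrac{1}{n}L\Lambda\diffH\omega=\diffRm\omega$ directly, so that both directions of the equivalence follow in one stroke from Lemma~\ref{lemma:zeroHorizDiff}. You instead work with the full de Rham differential, reduce to $\diff(\omega-f\alpha)=\psi\wedge\alpha$ with $\psi$ horizontal, and then need closedness of $\psi\wedge\alpha$ plus the fiberwise injectivity of $L:\Lambda^1\Dist^*\to\Lambda^3\Dist^*$ (valid exactly for $n\ge 2$) to kill $\psi$. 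Your route is longer and imports a piece of symplectic linear algebra the paper never needs, but it has the merit of making explicit \emph{why} the statement is structurally different at $n=1$ --- the failure of Lefschetz injectivity in degree one --- whereas in the paper this is hidden in the separate definition of $\diffRm$ for $n=1$. On the third claim your argument is again sound and essentially parallel: you derive $\diffH f=0$ by contracting with $\Reeb$ and eliminate the resulting constant by Stokes applied to $\omega\wedge(\diff\alpha)^n$ (which vanishes pointwise for degree reasons), while the paper reaches the same constant $\delta(\Jop\omega)$ and kills it with the divergence theorem; these are the same integration--by--parts argument in two guises.
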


\begin{proof}
    Whether $n > 1$ or $n = 1$, the first claim readily follows from
    $$
    (\deltaRm \omega , g)_{L^2} = (\omega , \diffRm g)_{L^2} = (\omega , \diff g)_{L^2} = (\delta \omega , g)_{L^2} , \quad g \in C^{\infty}(M,\mathbb{R}) .
    $$
    
    We move to the second claim, which is trivially true in case $n = 1$. Otherwise,
    $$
    \displaystyle \diffH\left( \omega - \frac{\delta( \Jop \omega )}{n} \alpha \right) = \diffH \omega - \frac{\Lambda \diffH \omega}{n} \diff \alpha = \diffRm \omega = 0 ,
    $$
    and the second claim for $n > 1$ follows from Lemma \ref{lemma:zeroHorizDiff}.

    Finally, assume that $\omega$ is additionally basic. Since in such setting Cartan's formula yields that $\diff \omega \in \OmegaH^2(M)$, if $\diffRm \omega = 0$, thanks to the second claim we infer that
    $$
    \frac{\diff \delta(J \omega)(E_{\ell})}{n} = \diff\left( \omega - \frac{\delta( \Jop \omega )}{n} \alpha \right)(R,E_{\ell}) = 0 , \quad \ell=1,\dots,2n ,
    $$
    for any, e.g., almost–contact frame $\{ \Reeb , E_1 , \dots , E_{2 n} \}$. Hence, Lemma \ref{lemma:zeroHorizDiff} yields that $\diff \delta(J \omega) = 0$. Given that $M$ is connected, $\delta(J \omega)$ is constant, and thus from the divergence Theorem we infer that $\delta(J \omega) = 0$, and the conclusion follows. 
\end{proof}

\begin{remark}
    In general, ``basicness'' of a form $\omega \in \OmegaH^1(M)$ is also necessary for $\diffRm \omega = 0$ to imply that $\diff \omega = 0$. Indeed, consider $M \triangleq \mathbb{R}^3$ non--compact equipped with $\alpha \triangleq \diff z - y \diff x$, so that $\Reeb = \partial_z$, and let $\omega \triangleq f \diff x \in \OmegaH^1(M)$ with $f \triangleq 2 x z - x^2 y \in C^{\infty}(M,\mathbb{R})$. By denoting $X \triangleq \partial_x + y \partial_z$ and $Y \triangleq \partial_y$, one shows that, although
    $$
    \diffRm \omega = -( X Y + \Reeb ) f \; \diff x \wedge \alpha - Y^2 f \; \diff y \wedge \alpha = 0 ,
    $$
    it holds that
    $$
    \diff \omega = - Y f \; \diff x \wedge \diff y - \Reeb f \; \diff x \wedge \alpha = x^2 \; \diff x \wedge \diff y - 2 x \; \diff x \wedge \alpha \neq 0 .
    $$
    Note that the previous counterexample holds true independently from the almost--contact structure $M$ may be equipped with.
\end{remark}

From \cite[Corollaire, page 290]{Rumin1994} we readily infer the following extension of the Hodge decomposition theorem for 1--form to the contact case:

\begin{proposition} \label{prop:deRhamDec}
    Let $\omega \in \OmegaH^1(M)$ be real--valued. There exist $\xi \in \OmegaH^1(M)$ Rumin--harmonic, and $f\in C^{\infty}(M,\mathbb{R})$ and $\eta \in \OmegaRm^2(M)$ such that
    $$
    \omega = \xi \oplus (\diffH f + \deltaRm \eta) , \quad \text{where} \quad \xi , \ \deltaRm \eta \in \OmegaH^1(M) \quad \text{are co--closed} ,
    $$
    and the direct sum is with respect to the metric $(\cdot,\cdot)_{L^2}$ on $\Omega^{\bullet}(M)$. If in addition it holds that $\diffRm \omega = 0$, then the above decomposition reduces to $\omega = \xi \oplus \diffH f$.
\end{proposition}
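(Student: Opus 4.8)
The plan is to obtain the decomposition by invoking Rumin's Hodge theory for the complex \eqref{eq:complexRumin} and then translating the abstract statement into the concrete horizontal representatives via the identifications fixed in Section~\ref{sec:RuminCohomolAbdDiff}. Rumin's construction equips each $\OmegaRm^k(M)$ with the $L^2$ metric induced by $\scalar{\cdot}{\cdot}$, and the associated Rumin Laplacian is maximally hypoelliptic; the Corollary on page~290 of \cite{Rumin1994} then yields, in degree $1$, the $L^2$--orthogonal splitting
\begin{equation*}
\OmegaRm^1(M) = \mathcal{H}^1 \oplus \textnormal{im} \, \diffRm \oplus \textnormal{im} \, \deltaRm ,
\end{equation*}
where $\mathcal{H}^1$ denotes the space of Rumin--harmonic classes. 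Applying this to $\classRm{\omega}$ and reading off representatives would produce $\xi$ Rumin--harmonic, $f \in C^{\infty}(M,\mathbb{R})$, and $\eta \in \OmegaRm^2(M)$ with $\omega = \xi + \diffH f + \deltaRm \eta$, where I use the identifications $\diffRm \classRm{f} \sim \diffH f$ and $\OmegaRm^1(M) \sim \OmegaH^1(M)$ so that every summand is genuinely horizontal.

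Next I would verify the three structural claims. For pairwise $L^2$--orthogonality I would use only $\diffRm^2 = 0$ together with the fact that $\deltaRm$ is the formal adjoint of $\diffRm$: indeed $(\diffH f , \deltaRm \eta)_{L^2} = (\diffRm \diffRm f , \eta)_{L^2} = 0$, while $(\xi , \diffH f)_{L^2} = (\deltaRm \xi , f)_{L^2} = 0$ and $(\xi , \deltaRm \eta)_{L^2} = (\diffRm \xi , \eta)_{L^2} = 0$, the last two vanishing because $\xi$ is Rumin--harmonic. This gives the direct sum. For co--closedness I would invoke Lemma~\ref{lemma:reductionDiffRm}, which identifies $\deltaRm$ with the classical codifferential $\delta$ on horizontal $1$--forms: since $\deltaRm \xi = 0$ we obtain $\delta \xi = 0$, and since $\deltaRm(\deltaRm \eta) = 0$ (as $\deltaRm^2 = 0$, being adjoint to $\diffRm^2 = 0$) and $\deltaRm \eta \in \OmegaH^1(M)$, we obtain $\delta(\deltaRm \eta) = 0$.

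Finally, for the reduction under $\diffRm \omega = 0$, I would apply $\diffRm$ to the decomposition: $\diffRm \xi = 0$ since $\xi$ is harmonic, and $\diffRm \diffH f = \diffRm \diffRm f = 0$, so $\diffRm \omega = \diffRm \deltaRm \eta$; pairing with $\eta$ then gives $0 = (\diffRm \deltaRm \eta , \eta)_{L^2} = (\deltaRm \eta , \deltaRm \eta)_{L^2}$, whence $\deltaRm \eta = 0$ and $\omega = \xi \oplus \diffH f$.

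I expect the only genuine obstacle to be the bookkeeping around the identifications of Section~\ref{sec:RuminCohomolAbdDiff}: one must ensure that the abstract harmonic, exact, and co--exact representatives are all selected horizontally, and check that the distinct $n = 1$ and $n > 1$ descriptions of $\OmegaRm^2(M)$ do not affect the degree--$1$ conclusions. The hard analytic input, namely the hypoellipticity of the Rumin Laplacian and the resulting Fredholm and Hodge theory, is entirely supplied by \cite{Rumin1994}, so within the scope of this work the proof reduces to these translation and orthogonality verifications.
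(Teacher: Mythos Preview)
Your proposal is correct and follows the same route as the paper: both derive the decomposition directly from the Hodge theory for the Rumin complex established in \cite[Corollaire, page~290]{Rumin1994}. The paper in fact offers no argument beyond that citation (``we readily infer''), whereas you spell out the orthogonality, co--closedness (via Lemma~\ref{lemma:reductionDiffRm}), and the $\diffRm\omega=0$ reduction explicitly; these verifications are routine and your treatment of them is sound.
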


A particularly practical application of Proposition \ref{prop:deRhamDec} stems from considering K--contact manifolds and basic forms, in which Rumin--harmonic terms in the de Rham decomposition can essentially be replaced by harmonic (in the classical sense) terms:

\begin{corollary} \label{corol:deRhamDec}
    Assume $M$ is K--contact. If $\omega \in \OmegaH^1(M)$ is basic, then there exist $\xi \in \OmegaH^1(M)$ harmonic (in the classical sense) that is additionally basic, as well as $f \in C^{\infty}(M,\mathbb{R})$ and $\eta \in \OmegaRm^2(M)$ such that
    $$
    \omega = \xi \oplus (\diffH f + \deltaRm \eta) , \quad \text{where} \quad \deltaRm \eta \in \OmegaH^1(M) \quad \text{is co--closed} ,
    $$
    and the direct sum is with respect to the metric $(\cdot,\cdot)_{L^2}$ on $\Omega^{\bullet}(M)$. If in addition it holds that $\diffRm \omega = 0$, then the above decomposition reduces to $\omega = \xi \oplus \diffH f$.
\end{corollary}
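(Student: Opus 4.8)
The plan is to start from the Hodge--Rumin decomposition of Proposition~\ref{prop:deRhamDec} applied to the given basic $\omega$, namely $\omega = \xi \oplus (\diffH f + \deltaRm \eta)$ with $\xi \in \OmegaH^1(M)$ Rumin--harmonic and co--closed, and then to show that the K--contact hypothesis forces the Rumin--harmonic summand $\xi$ to be basic. Once this is established, Lemma~\ref{lemma:reductionDiffRm} immediately upgrades $\xi$ from Rumin--harmonic to harmonic in the classical sense, which is exactly the claimed refinement. The final assertion under the extra hypothesis $\diffRm\omega = 0$ then follows with no additional work, since in that case Proposition~\ref{prop:deRhamDec} already guarantees that $\deltaRm\eta$ drops out and leaves $\omega = \xi \oplus \diffH f$.

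The core of the argument is the equivariance of the whole Rumin--Hodge machinery under the Reeb flow. Let $\phi_t$ denote the flow of $\Reeb$, which is complete since $M$ is compact. As already observed in the excerpt, $\mathcal{L}_\Reeb \alpha = 0$ holds in any contact manifold, so $\phi_t^*\alpha = \alpha$ and $\phi_t$ preserves both $\Dist$ and $\diff\alpha$. The K--contact hypothesis contributes two further facts. First, $\Reeb$ being Killing means $\phi_t$ is a $\metric$--isometry, hence $\phi_t^*$ commutes with the Hodge star, with $\delta$, and with the operators $L$ and $\Lambda$. Second --- and this is the point where K--contact is genuinely used, beyond mere preservation of $\alpha$ --- it is equivalent to $h_\Reeb = \mathcal{L}_\Reeb \Jop/2 = 0$, so that $\mathcal{L}_\Reeb \Jop = 0$ and $\phi_t$ commutes with $\Jop$. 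Combining these facts with the identity $\diffH \omega = \diff\omega - \alpha\wedge\iota_\Reeb\diff\omega$ and the defining formulas for $\diffRm$, I would conclude that $\phi_t^*$ commutes with $\diffH$, hence with $\diffRm$, and therefore (being an $L^2$--isometry) with its adjoint $\deltaRm$. In particular $\phi_t^*$ preserves the class of horizontal Rumin--harmonic forms, as well as its $L^2$--orthogonal complement.

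I would then invoke uniqueness. The Rumin--harmonic component of $\omega$ is its $L^2$--orthogonal projection onto the finite--dimensional space of Rumin--harmonic $1$--forms, and is therefore uniquely determined. Since $\omega$ is basic, $\frac{\diff}{\diff t}\phi_t^*\omega = \phi_t^*\mathcal{L}_\Reeb\omega = 0$, so $\phi_t^*\omega = \omega$ for all $t$. Applying $\phi_t^*$ to the decomposition and using that it sends the Rumin--harmonic part to a Rumin--harmonic part and the remaining (exact plus co--exact) part to a form $L^2$--orthogonal to the harmonic ones, I obtain a Hodge--Rumin decomposition of $\phi_t^*\omega = \omega$ whose harmonic component is $\phi_t^*\xi$. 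Uniqueness of the harmonic projection then gives $\phi_t^*\xi = \xi$ for every $t$, and differentiating at $t=0$ yields $\mathcal{L}_\Reeb\xi = 0$, i.e.\ $\xi$ is basic. Lemma~\ref{lemma:reductionDiffRm} now shows that $\xi$, being basic and Rumin--harmonic, is harmonic in the classical sense, completing the proof.

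I expect the main obstacle to be the bookkeeping behind the equivariance claim, and specifically isolating where K--contact (rather than the weaker fact that the Reeb flow preserves $\alpha$) is really needed: namely that the flow preserves $\Jop$, equivalently $h_\Reeb = 0$. This is exactly what makes $\phi_t^*$ commute with $\diffRm$ and $\deltaRm$, and hence preserve the harmonic projection; moreover, the Remark following Lemma~\ref{lemma:reductionDiffRm} already exhibits that the conclusion ``$\diffRm\omega=0 \Rightarrow \diff\omega=0$'' can fail without a basicness-type hypothesis, which is consistent with K--contact being the essential ingredient here. The remaining ingredient, uniqueness of the Rumin--harmonic projection, is standard finite--dimensional Hodge theory and requires nothing beyond Proposition~\ref{prop:deRhamDec} together with $L^2$--orthogonality of the three summands.
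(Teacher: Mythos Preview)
Your proposal is correct and follows essentially the same approach as the paper. Both start from the Rumin--Hodge decomposition of Proposition~\ref{prop:deRhamDec}, use the K--contact hypothesis (specifically $\mathcal{L}_\Reeb J = 0$ and the Killing property) to show that the Reeb action commutes with $\diffRm$ and $\deltaRm$, and then invoke uniqueness of the harmonic projection to conclude $\mathcal{L}_\Reeb\xi = 0$, finishing with Lemma~\ref{lemma:reductionDiffRm}. The only difference is cosmetic: the paper works infinitesimally, applying $\mathcal{L}_\Reeb$ termwise to the decomposition and checking explicitly that $\mathcal{L}_\Reeb\diffH f = \diffH(\Reeb f)$ and $\mathcal{L}_\Reeb\deltaRm\eta = \deltaRm\mathcal{L}_\Reeb\eta$ (which in particular requires verifying $\mathcal{L}_\Reeb\eta \in \OmegaRm^2(M)$), whereas you work with the integrated flow $\phi_t^*$ and argue directly that it preserves the orthogonal splitting ``harmonic $\oplus$ complement'', which slightly streamlines the bookkeeping.
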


\begin{proof}
    Let $\omega = \xi \oplus (\diffH f + \deltaRm \eta)$ as in Proposition \ref{prop:deRhamDec}. Since computing $\star$ through its very definition along a Darboux frame yields $\mathcal{L}_{\Reeb} \star \beta = \star \mathcal{L}_{\Reeb} \beta$ for $\beta \in \OmegaH^k(M)$, $\mathcal{L}_{\Reeb}$ commutes with $\delta$. Hence, from Lemma \ref{lemma:reductionDiffRm} we infer that $\deltaRm \mathcal{L}_{\Reeb} \xi = \mathcal{L}_{\Reeb} \deltaRm \xi = 0$. Moreover, since Cartan's formula yields both $\mathcal{L}_{\Reeb} \alpha = 0$ and $\mathcal{L}_{\Reeb} \diff \alpha = 0$, and in K-contact settings it holds that $\mathcal{L}_{\Reeb} \Jop^{(k)} \beta = \Jop^{(k)} \mathcal{L}_{\Reeb} \beta$ for $\beta \in \Omega^k(M)$, we may compute
    \begin{align*}
        \diff \hspace{-0.5ex} \left( \mathcal{L}_{\Reeb} \xi - \frac{\delta( \Jop \mathcal{L}_{\Reeb} \xi )}{n} \alpha \right) 
        \hspace{-0.55ex} = \hspace{-0.25ex} \mathcal{L}_{\Reeb} \diff \xi - \frac{\mathcal{L}_R \diff \delta( \Jop \xi )}{n} \wedge \alpha - \frac{\mathcal{L}_R \delta( \Jop \xi )}{n} \diff \alpha 
        = \mathcal{L}_{\Reeb} \diff \hspace{-0.5ex} \left( \xi - \frac{\delta( \Jop \xi )}{n} \alpha \right) ,
    \end{align*}
    that turns out to equal zero due to Lemma \ref{lemma:reductionDiffRm}. In particular, from the very same lemma we may conclude that $\mathcal{L}_{\Reeb} \xi$ is Rumin--harmonic ($\mathcal{L}_{\Reeb} \xi$ is clearly horizontal).

    At this step, on the one hand, $\mathcal{L}_{\Reeb} \alpha = 0$ readily yields that $\mathcal{L}_{\Reeb} \diffH f = \diffH \Reeb(f)$. On the other hand, in case $n > 1$, $\mathcal{L}_{\Reeb} \eta$ is clearly horizontal and it holds that $\eta \in \text{ker} \ \Lambda$ iff $\diff \alpha \wedge \star \eta = 0$. Since $\mathcal{L}_{\Reeb} \diff \alpha = 0$ yields $\diff \alpha \wedge \star \mathcal{L}_{\Reeb} \eta = \mathcal{L}_{\Reeb}( \diff \alpha \wedge \star \eta ) = 0$, we infer that $\mathcal{L}_{\Reeb} \eta \in \OmegaRm^2(M)$. This latter inclusion is even simpler to prove if $n = 1$.
    In particular, assuming first that $n = 1$, 
    the definition of $\deltaRm$ yields that
    $$
    \mathcal{L}_{\Reeb} \deltaRm \eta = -\star \diff ( \star \mathcal{L}_{\Reeb} \eta - \delta( J \star \mathcal{L}_{\Reeb} \eta ) \alpha ) = \deltaRm \mathcal{L}_{\Reeb} \eta .
    $$
    By leveraging the formal definition of $\deltaRm$ for $n > 1$ via $\star$ and $\diffRm$ \cite{Rumin1994}, this latter identity is easily proved to hold if $n > 1$ as well. We finally conclude that
    $$
    \mathcal{L}_{\Reeb} \xi \oplus ( \diffH \Reeb(f) + \deltaRm \mathcal{L}_{\Reeb} \eta ) = \mathcal{L}_{\Reeb} \omega = 0 ,
    $$
    with $\mathcal{L}_{\Reeb} \xi \in \OmegaH^1(M)$ Rumin--harmonic and $\mathcal{L}_{\Reeb} \eta \in \OmegaRm^2(M)$. In particular, we infer that $\xi$ is basic, hence harmonic due to Lemma \ref{lemma:reductionDiffRm}.
\end{proof}

\section{The Horizontal Magnetic Laplacian and its First Eigenvalue}
\label{sec:Laplacian}
\subsection{Definitions} \label{sec:DefLap} 

We define the \textit{horizontal (Rumin) Laplacian} on functions as
$$
\DeltaRm \varphi \triangleq \deltaRm \diffRm \varphi = \delta \diffH \varphi = -\star \diff \star \diffH \varphi , \quad \varphi \in C^{\infty}(M,\mathbb{C}) ,
$$
and the corresponding \textit{horizontal (Rumin) gradient} as
$$
\nablaRm \varphi \triangleq (\diffRm \varphi)^{\sharp} = (\diffH \varphi)^{\sharp} \in \Gamma(T M) , \quad \varphi \in C^{\infty}(M,\mathbb{C}) .
$$
Applying the divergence Theorem on the real and imaginary parts separately yields
\begin{equation} \label{eq:equivLaplacian}
    \int_M \DeltaRm \varphi \ \overline{\psi} = \int_M \metric( \nablaRm \varphi , \nabla \psi ) = \int_M \scalar{\diffH \varphi}{\diff \psi} , \quad \varphi , \psi \in C^{\infty}(M,\mathbb{C}) .
\end{equation}

For any fixed real--valued $A \in \Omega^1(M)$, we define the \textit{magnetic horizontal (Rumin) Laplacian (associated with the magnetic potential $A$)} on functions as
$$
\DeltaRmA{A} \varphi \triangleq -\star ( \diff + i A \wedge ) \star (\diffH + i \horiz{A}) \varphi , \quad \varphi \in C^{\infty}(M,\mathbb{C}) ,
$$
and the corresponding \textit{magnetic horizontal (Rumin) gradient} as
$$
\nablaRmA{A} \varphi \triangleq ((\diffH + i \horiz{A}) \varphi)^{\sharp} \in \Gamma(T M) , \quad \varphi \in C^{\infty}(M,\mathbb{C}) .
$$

\begin{lemma} \label{lemma:MagnLap}
    The following identities hold true for every $\varphi , \psi \in C^{\infty}(M,\mathbb{C})$:
    \begin{itemize}
        \item $\DeltaRmA{A} \varphi = \DeltaRm \varphi + \norm{\horiz{A}}^2 \varphi + i ( \varphi \delta \horiz{A} - 2 \scalar{\diffH \varphi}{A} )$,
        \item $\displaystyle \int_M \DeltaRmA{A} \varphi \ \overline{\psi} = \int_M \metric( \nablaRmA{A} \varphi , \nablaRmA{A} \psi )$.
    \end{itemize}
\end{lemma}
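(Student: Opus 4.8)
The plan is to prove both identities by unwinding the definition $\DeltaRmA{A}\varphi = -\star(\diff + iA\wedge)\star(\diffH + i\horiz{A})\varphi$ and reducing everything to the two elementary facts available in this setting: the pointwise wedge--pairing identity $\star(\mu\wedge\star\nu)=\scalar{\mu}{\nu}$ for $1$--forms (valid because $\star\star=\mathrm{id}$ on $1$--forms of a $(2n+1)$--manifold, and because a conjugation is absorbed whenever one of the two factors is real, as is the case for $A$ and $\horiz{A}$), together with $-\star\diff\star=\delta$ on $1$--forms, which is exactly the relation already used to write $\DeltaRm=\delta\diffH=-\star\diff\star\diffH$.

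For the first, pointwise, identity I would set $\omega_\varphi \triangleq (\diffH + i\horiz{A})\varphi = \diffH\varphi + i\varphi\horiz{A}$ and expand $-\star(\diff + iA\wedge)\star\omega_\varphi = -\star\diff\star\omega_\varphi - i\star(A\wedge\star\omega_\varphi)$. The piece $-\star\diff\star\diffH\varphi$ is $\DeltaRm\varphi$ by definition. In $-i\star\diff\star(\varphi\horiz{A})$ I would push $\diff$ through $\star(\varphi\horiz{A})=\varphi\star\horiz{A}$ by Leibniz, producing $-i\star(\diff\varphi\wedge\star\horiz{A})=-i\scalar{\diffH\varphi}{A}$ (only the horizontal part of $\diff\varphi$ survives against $\horiz{A}$) and $-i\varphi\star\diff\star\horiz{A}=i\varphi\delta\horiz{A}$. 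In the magnetic part, $-i\star(A\wedge\star\diffH\varphi)=-i\scalar{\diffH\varphi}{A}$ and $-i\star(A\wedge\star(i\varphi\horiz{A}))=\varphi\star(A\wedge\star\horiz{A})=\varphi\norm{\horiz{A}}^2$, using $\scalar{\horiz{A}}{A}=\norm{\horiz{A}}^2$. Collecting the two independent contributions $-i\scalar{\diffH\varphi}{A}$ is precisely where the factor $2$ in the statement originates.

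For the second, integral, identity I would exhibit $-\star(\diff + iA\wedge)\star$ as the formal $L^2$--adjoint of $\diffH + i\horiz{A}$. Converting the scalar integral to a top--form integral via $f\,\mathrm{vol}=\star f$ (equivalently, for a top form $\beta$, $(\star\beta)\,\overline{\psi}\,\mathrm{vol}=\overline{\psi}\,\beta$), I obtain $\int_M\DeltaRmA{A}\varphi\,\overline{\psi} = -\int_M\overline{\psi}\,\diff\star\omega_\varphi - i\int_M\overline{\psi}\,A\wedge\star\omega_\varphi$. Applying Stokes to $\diff(\overline{\psi}\,\star\omega_\varphi)$ moves the differential onto $\overline{\psi}$, giving $\int_M\diff\overline{\psi}\wedge\star\omega_\varphi=\int_M\scalar{\omega_\varphi}{\diffH\psi}\,\mathrm{vol}$, while the magnetic term yields $\int_M\scalar{\omega_\varphi}{i\psi\horiz{A}}\,\mathrm{vol}$ (the conjugations on $i$ and $\psi$ producing the correct sign). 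Summing reconstructs $\int_M\scalar{\omega_\varphi}{\omega_\psi}\,\mathrm{vol}=\int_M\metric(\nablaRmA{A}\varphi,\nablaRmA{A}\psi)$, since $\scalar{\cdot}{\cdot}=\metric(\cdot^\sharp,\cdot^\sharp)$ on $1$--forms.

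Each computation is routine once the conventions are fixed; the only genuinely delicate point — and the thing I would be most careful about — is the bookkeeping of the Hermitian conjugations and the horizontal projections. Concretely, one must track that $\scalar{\cdot}{\cdot}$ is conjugate--linear in its second slot, that $\star$ is extended $\mathbb{C}$--linearly so that $\mu\wedge\star\nu$ is $\mathbb{C}$--bilinear and coincides with the Hermitian pairing only because $A$ and $\horiz{A}$ are real, and that contracting a horizontal factor silently replaces $A$ by $\horiz{A}$ and $\diff$ by $\diffH$. Obtaining the coefficient $-2i\scalar{\diffH\varphi}{A}$ (rather than $-i$) and the exact sign of the $i\varphi\delta\horiz{A}$ term hinges entirely on this accounting.
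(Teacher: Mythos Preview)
Your proof is correct. For the first identity, your direct expansion via the pointwise wedge--pairing $\star(\mu\wedge\star\nu)$ is essentially the paper's own approach; the paper writes the cross term as $-i\,\diver(\varphi\,\horiz{A}^{\sharp})$ and then expands via $\diver(\varphi\,\horiz{A}^{\sharp}) = -\varphi\,\delta\horiz{A} + \scalar{\diff\varphi}{\horiz{A}}$, which unfolds to exactly the same four pieces you list.

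For the second identity, however, your route differs from the paper's. The paper \emph{derives the integral identity from the pointwise one}: it plugs the first formula into $\int_M \DeltaRmA{A}\varphi\,\overline{\psi}$, separately expands $\int_M \metric(\nablaRmA{A}\varphi,\nablaRmA{A}\psi)$ by bilinearity, and matches the two after one application of the divergence theorem to the cross term $\int_M \overline{\scalar{\diffH\psi}{\horiz{A}\varphi}}$. You instead prove the second identity \emph{independently} of the first, by exhibiting $-\star(\diff+iA\wedge)\star$ as the formal $L^2$--adjoint of $\diffH+i\horiz{A}$ through a single Stokes argument. Your approach is shorter and conceptually cleaner (it is the natural ``adjoint of the connection'' computation), and in particular it never uses the pointwise expansion; the paper's route is more hands--on but has the minor advantage of making all the individual terms visible, which it later reuses verbatim in the proof of Lemma~\ref{lemma:LaxMilgram}.
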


\begin{proof}
    Thanks to \eqref{eq:equivLaplacian}, we may compute
    \begin{align*}
        \DeltaRmA{A} \varphi &= 
        \DeltaRm \varphi - i \ \diver( \varphi \horiz{A}^{\sharp} ) - i \ \scalar{ \diffH \varphi }{A} + \scalar{A}{\horiz{A}} \varphi ,
    \end{align*}
    and the first identity follows since $\diver( \varphi \horiz{A}^{\sharp} ) = -\varphi \ \delta \horiz{A} + \scalar{\diff \varphi}{\horiz{A}}$.
    
    For the second identity, on the one hand, the first identity yields
    \begin{align} \label{eq:LapLapProof}
        \int_M \DeltaRmA{A} \varphi \ \overline{\psi} = \int_M\DeltaRm \varphi \ \overline{\psi} \ &+ \int_M \norm{\horiz{A}}^2 \varphi \, \overline{\psi} \\
        &- i \left( \int_M \diver(\horiz{A}^{\sharp}) \varphi \, \overline{\psi} + 2 \int_M \scalar{\diffH \varphi}{\horiz{A} \psi} \right) . \nonumber
    \end{align}
    On the other hand, thanks to
    $$
    \int_M \overline{\scalar{\diffH \psi}{\horiz{A} \varphi}} = -\int_M \diver(\horiz{A}^{\sharp}) \varphi \, \overline{\psi} - \int_M \scalar{\diff_H \varphi}{\horiz{A}} \overline{\psi} ,
    $$
    and \eqref{eq:equivLaplacian}, we may infer that
    \begin{align*}
        &\int_M \metric( \nablaRmA{A} \varphi , \nablaRmA{A} \psi ) = \\
        &= \int_M \DeltaRm \varphi \ \overline{\psi} + \int_M \norm{\horiz{A}}^2 \varphi \, \overline{\psi} - i \left( \int_M \diver(\horiz{A}^{\sharp}) \varphi \, \overline{\psi} + 2 \int_M \scalar{\diffH \varphi}{\horiz{A} \psi} \right) ,
    \end{align*}
    concluding the proof due to \eqref{eq:LapLapProof}.
\end{proof}

To define the first eigenvalue of $\DeltaRmA{A}$, we denote the (1,1)--tensor $\doubleNablaRm \varphi(\mu,X) \triangleq \mu \circ \Pi( \nabla_{\Pi(X)} \nablaRm \varphi )$, for 
$\varphi \in C^{\infty}(M,\mathbb{C})$, $\mu \in \Omega^1(M)$, $X \in \Gamma(T M)$, and the norms
$$
\normWfirst{\varphi} \triangleq \sqrt{\int_M \Big (|\varphi|^2 + \norm{\nablaRm \varphi}^2 \Big)} \ \ \textnormal{and} \ \ \normWsecond{\varphi} \triangleq \sqrt{\normWfirst{\varphi} + \int_M \norm{\doubleNablaRm \varphi}^2} ,
$$
and define the functional spaces
$$
\Wfirst(M,\mathbb{C}) \triangleq \overline{C^{\infty}(M,\mathbb{C})}^{\normWfirst{\cdot}} , \ \Wsecond(M,\mathbb{C}) \triangleq \overline{C^{\infty}(M,\mathbb{C})}^{\normWsecond{\cdot}} \subseteq L^2(M,\mathbb{C}) .
$$
It is known that $\Wfirst(M,\mathbb{C})$ is Hilbert \cite{Franceschi2020}. Since for $\varphi \in C^{\infty}(M,\mathbb{C})$ clearly $|\DeltaRm \varphi| \le \norm{\doubleNablaRm \varphi}$ pointwise, thanks to \eqref{eq:equivLaplacian} and Lemma \ref{lemma:MagnLap} both the horizontal Laplacian and the magnetic horizontal Laplacian can be extended to densely--defined, non--negative and symmetric linear operators
$$
\DeltaRm , \ \DeltaRmA{A} : \ \Wsecond(M,\mathbb{C}) \subseteq L^2(M,\mathbb{C}) \to L^2(M,\mathbb{C}) .
$$

We now define the \textit{first eigenvalue of the magnetic horizontal Laplacian} as
$$
\lambda_1(A) \triangleq \underset{\footnotesize \begin{array}{c}
    \varphi \in \Wsecond(M,\mathbb{C}) \\
    \| \varphi \|_{L^2} = 1
\end{array}}{\inf} (\DeltaRmA{A} \varphi , \varphi)_{L^2} = \underset{\footnotesize \begin{array}{c}
    \varphi \in \Wsecond(M,\mathbb{C}) \\
    \| \varphi \|_{L^2} = 1
\end{array}}{\inf} \| \nablaRmA{A} \varphi \|^2_{L^2} \ge 0 .
$$
We recall that, given a Hilbert space $\mathcal{H}$, and a densely--defined, non--negative and symmetric linear operator $T : D(T) \subseteq \mathcal{H} \to \mathcal{H}$, the Maximum Principle yields the existence of a minimum $\varphi \in D(T)$ with $\| \varphi \|_{\mathcal{H}} = 1$ to the minimization problem
$$
(T \varphi , \varphi)_{\mathcal{H}} = \min \ \sigma(T) = \underset{\footnotesize \begin{array}{c}
    \varphi \in D(T) \\
    \| \varphi \|_{\mathcal{H}} = 1
\end{array}}{\inf} (T \varphi , \varphi)_{\mathcal{H}} \ge 0 ,
$$
in case there exists $\lambda \in \mathbb{R}$ such that the range $\textnormal{Rng}(T + \lambda I) = \mathcal{H}$, and the spectrum $\sigma(T) \subseteq [0,+\infty)$ of $T$ is discrete. In particular, $\varphi$ is eigenvector of $T$ with eigenvalue $\lambda_1(T) \triangleq \min \ \sigma(T)$. From this remark, we readily infer the infimum that defines $\lambda_1(A)$ is attained at some eigenfunction $\varphi \in \Wsecond(M,\mathbb{C})$ with $\| \varphi \|_{L^2} = 1$ from the following

\begin{lemma} \label{lemma:LaxMilgram}
    There exists $\lambda > 0$ such that $\textnormal{Rng}( \DeltaRmA{A} + \lambda I ) = L^2(M,\mathbb{C})$. In addition, the spectrum $\sigma(\DeltaRmA{A}) \subseteq [0,+\infty)$ of $\DeltaRmA{A}$ is discrete.
\end{lemma}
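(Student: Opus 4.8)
The plan is to realize $\DeltaRmA{A} + \lambda I$ as the operator associated, via the first representation theorem, with a bounded coercive sesquilinear form on the Hilbert space $\Wfirst(M,\mathbb{C})$, solve the corresponding equation weakly through the Lax--Milgram theorem, and then promote the weak solution to a genuine element of $\Wsecond(M,\mathbb{C})$ by subelliptic regularity. Discreteness of the spectrum will follow from the compactness of the embedding $\Wfirst(M,\mathbb{C}) \hookrightarrow L^2(M,\mathbb{C})$, which renders the resolvent compact.

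Concretely, for $\lambda > 0$ I would introduce on $\Wfirst(M,\mathbb{C})$ the sesquilinear form
$$
a_\lambda(\varphi,\psi) \triangleq \int_M \metric(\nablaRmA{A}\varphi, \nablaRmA{A}\psi) + \lambda \int_M \varphi \, \overline{\psi} ,
$$
which, by the second identity in Lemma \ref{lemma:MagnLap}, agrees with $((\DeltaRmA{A}+\lambda I)\varphi,\psi)_{L^2}$ on smooth functions. Writing $\nablaRmA{A}\varphi = \nablaRm\varphi + i\varphi\,\horiz{A}^{\sharp}$ and using that $\norm{\horiz{A}}$ is bounded on the compact manifold $M$, every resulting term is dominated by $\normWfirst{\varphi}\,\normWfirst{\psi}$, so $a_\lambda$ is bounded. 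Coercivity is the substantive point: expanding $a_\lambda(\varphi,\varphi)$ in the same way and applying Young's inequality to absorb the mixed first--order term into $\norm{\nablaRm\varphi}^2_{L^2}$ and $\|\varphi\|^2_{L^2}$, one obtains, for any $\lambda$ larger than a constant depending only on $\sup_M \norm{\horiz{A}}$, an estimate $a_\lambda(\varphi,\varphi) \ge c\,\normWfirst{\varphi}^2$ with $c>0$. Lax--Milgram then produces, for every $F \in L^2(M,\mathbb{C}) \subseteq \Wfirst(M,\mathbb{C})^*$, a unique $\varphi \in \Wfirst(M,\mathbb{C})$ with $a_\lambda(\varphi,\psi) = (F,\psi)_{L^2}$ for all $\psi$, i.e.\ $(\DeltaRmA{A}+\lambda I)\varphi = F$ holds weakly.

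It remains to show that this weak solution lies in $\Wsecond(M,\mathbb{C})$, so that the equation holds in the operator domain and $\textnormal{Rng}(\DeltaRmA{A}+\lambda I) = L^2(M,\mathbb{C})$. Here subellipticity enters: by Remark \ref{remark:horizComplete}, an almost--contact frame $\{ E_1,\dots,E_{2n} \}$ together with the brackets $[E_i,E_{i+n}]$ spans $T M$, so the principal part $\DeltaRm = \delta\diffH$ is, up to lower--order terms controlled by $\horiz{A}$, a Hörmander sum of squares satisfying the bracket--generating condition. Hörmander's hypoellipticity theorem and the attendant subelliptic estimates give local control of the full second horizontal derivative $\doubleNablaRm\varphi$ in $L^2$ by $\DeltaRm\varphi$ and $\varphi$, upgrading $\varphi$ to $\Wsecond(M,\mathbb{C})$ and turning the weak identity into $(\DeltaRmA{A}+\lambda I)\varphi = F$ in $L^2(M,\mathbb{C})$.

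Finally, for discreteness of $\sigma(\DeltaRmA{A})$ I would invoke the compactness of the embedding $\Wfirst(M,\mathbb{C}) \hookrightarrow L^2(M,\mathbb{C})$ on the compact manifold $M$, i.e.\ the sub-Riemannian Rellich--Kondrachov theorem, which again rests on the bracket--generating condition of Remark \ref{remark:horizComplete}. Since the solution map $F \mapsto \varphi$ built above is bounded from $L^2(M,\mathbb{C})$ into $\Wfirst(M,\mathbb{C})$, composing it with this compact embedding shows that the resolvent $(\DeltaRmA{A}+\lambda I)^{-1} : L^2(M,\mathbb{C}) \to L^2(M,\mathbb{C})$ is compact; as $\DeltaRmA{A}$ is self--adjoint and non--negative, the spectral theorem for compact self--adjoint operators applied to the resolvent yields a discrete spectrum accumulating only at $+\infty$. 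The main obstacle is precisely the sub-Riemannian analytic input used in the last two steps: because $a_\lambda$ is only \emph{horizontally} coercive, ordinary elliptic theory does not apply, and one must genuinely exploit Hörmander's condition to recover the missing (Reeb--direction) regularity needed for $\Wsecond$ membership and for the compactness of the embedding.
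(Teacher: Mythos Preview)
Your proposal is correct and follows essentially the same route as the paper: Lax--Milgram on $\Wfirst(M,\mathbb{C})$ to produce a weak solution, hypoelliptic regularity (the paper cites Rothschild--Stein rather than H\"ormander, but this is the same input) to upgrade it to $\Wsecond(M,\mathbb{C})$, and then compactness of the embedding $\Wfirst(M,\mathbb{C})\hookrightarrow L^2(M,\mathbb{C})$ to obtain a compact resolvent and hence discrete spectrum. The only cosmetic difference is that the paper establishes the $L^2\to\Wfirst$ bound on the resolvent for \emph{every} $\lambda>0$ via a more explicit Young-inequality computation with a variable weight $h$, whereas you obtain it for large $\lambda$ directly from coercivity; both are adequate for the stated conclusion.
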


\begin{proof}
    The proof leverages routine functional analysis techniques. We report it in the appendix for the sake of completeness.
\end{proof}

The magnetic horizontal Laplacian satisfies the \textit{Gauge invariance principle}.

\begin{lemma} \label{lemma:Gauge}
    For $f , h \in C^{\infty}(M,\mathbb{R})$, it holds that $\lambda_1(A + \diff f + h \alpha) = \lambda_1(A)$.
\end{lemma}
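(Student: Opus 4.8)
The plan is to exploit two facts: that $\lambda_1(A)$ depends on $A$ only through its horizontal representative $\horiz{A}$, and that the genuine gauge term $\diff f$ can be absorbed by conjugating the magnetic gradient with the unitary multiplication operator $\varphi\mapsto e^{-if}\varphi$.

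First I would dispose of the $h\alpha$ term. Since $(h\alpha)(\Reeb)=h$, the identity $\horiz{\omega}=\omega-\alpha\wedge\iota_{\Reeb}\omega$ gives $\horiz{(h\alpha)}=h\alpha-h\alpha=0$, whence
$$
\horiz{(A+\diff f+h\alpha)}=\horiz{A}+\diffH f .
$$
Because, by the second identity of Lemma \ref{lemma:MagnLap} with $\psi=\varphi$, the variational quantity
$$
\lambda_1(A)=\inf_{\substack{\varphi\in\Wsecond(M,\mathbb{C})\\ \|\varphi\|_{L^2}=1}}\|\nablaRmA{A}\varphi\|_{L^2}^2 ,\qquad \nablaRmA{A}\varphi=\big((\diffH+i\horiz{A})\varphi\big)^{\sharp},
$$
involves $A$ only via $\horiz{A}$, this already shows $\lambda_1(A+\diff f+h\alpha)=\lambda_1(A+\diff f)$, and it remains to treat the exact term $\diff f$ alone.

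The key step is a Leibniz computation. For $\varphi\in C^\infty(M,\mathbb{C})$, linearity of $\horiz{(\cdot)}$ gives $\diffH(e^{-if}\varphi)=e^{-if}\diffH\varphi-ie^{-if}\varphi\,\diffH f$, and using $\horiz{(\diff f)}=\diffH f$ the two terms proportional to $\diffH f$ cancel:
$$
\big(\diffH+i(\horiz{A}+\diffH f)\big)(e^{-if}\varphi)=e^{-if}\,(\diffH+i\horiz{A})\varphi ,
$$
so that $\nablaRmA{A+\diff f}(e^{-if}\varphi)=e^{-if}\,\nablaRmA{A}\varphi$. Since $f$ is real--valued, $|e^{-if}|\equiv 1$, hence $\|e^{-if}\varphi\|_{L^2}=\|\varphi\|_{L^2}$ and $\|\nablaRmA{A+\diff f}(e^{-if}\varphi)\|_{L^2}=\|\nablaRmA{A}\varphi\|_{L^2}$.

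Finally I would conclude that $\varphi\mapsto e^{-if}\varphi$ is a unitary of $L^2(M,\mathbb{C})$ which restricts to a bijection of $\Wsecond(M,\mathbb{C})$ onto itself; this domain--preservation is the only point requiring mild care, but it is routine, since $M$ is compact and $f\in C^\infty$, so multiplication by $e^{-if}$ has bounded derivatives of all orders and is therefore bounded, with bounded inverse $e^{if}$, for the norms $\normWfirst{\cdot}$ and $\normWsecond{\cdot}$, and thus preserves the closures defining $\Wfirst(M,\mathbb{C})$ and $\Wsecond(M,\mathbb{C})$ (the identity for the magnetic gradient, established on smooth functions, then extends to $\Wsecond(M,\mathbb{C})$ by density). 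This map sends the admissible set for $\lambda_1(A)$ bijectively onto that for $\lambda_1(A+\diff f)$ while preserving the value of the Rayleigh quotient, forcing the two infima to coincide: $\lambda_1(A+\diff f)=\lambda_1(A)$. Combined with the first paragraph, this yields $\lambda_1(A+\diff f+h\alpha)=\lambda_1(A)$.
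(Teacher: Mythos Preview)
Your proof is correct and uses the same key idea as the paper, namely the unitary conjugation $\varphi\mapsto e^{\pm if}\varphi$. The only difference is in execution: the paper verifies the intertwining relation $T\circ\DeltaRmA{A+\diff f+h\alpha}\circ T^{-1}=\DeltaRmA{A}$ at the operator level via the explicit formula of Lemma~\ref{lemma:MagnLap}, whereas you work directly at the quadratic--form level, first disposing of $h\alpha$ by horizontality and then checking $\nablaRmA{A+\diff f}(e^{-if}\varphi)=e^{-if}\nablaRmA{A}\varphi$; your route is slightly more economical since it bypasses the full expansion of the Laplacian.
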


\begin{proof}
    It is sufficient to prove that the isometry
    $$
    \varphi \in L^2(M,\mathbb{C}) \mapsto T(\varphi) \triangleq e^{i f} \varphi \in L^2(M,\mathbb{C})
    $$
    satisfies $T( \Delta_{H,A+\diff f+h \alpha} (T^{-1} \varphi) ) = \DeltaRmA{A} \varphi$, for every $\varphi \in L^2(M,\mathbb{C})$. For this, thanks to Lemma \ref{lemma:MagnLap}, for every $\varphi \in L^2(M,\mathbb{C})$ we may compute
    \begin{align*}
        &T( \Delta_{H,A+\diff f+g \alpha} (e^{-i f} \varphi) ) = \\
        &=  T\Big( -\diver\big( e^{-i f} \nablaRm \varphi - i e^{-i f} \varphi \nablaRm f \big) \\
        &\hspace{15ex} + \scalar{\horiz{A} + \diffH f}{\horiz{A} + \diffH f} e^{-i f} \varphi - i e^{-i f} \varphi \ \diver( \horiz{A}^{\sharp} + \nablaRm f ) \\
        &\hspace{22ex} + 2 e^{-i f} \scalar{\diffH \varphi}{\horiz{A} + \diffH f} - 2 i e^{-i f} \varphi \ \scalar{\diffH f}{\horiz{A} + \diffH f} \Big) \\
        &=  T\Big( e^{-i f} \big( \DeltaRm \varphi - i \big( \varphi \diver( \horiz{A}^{\sharp} ) + 2 \scalar{\diffH \varphi}{\horiz{A}} \big) + \scalar{\horiz{A}}{\horiz{A}} \varphi \big) \Big) = \DeltaRmA{A} \varphi .
    \end{align*}
\end{proof}

\subsection{Spectrum shift via magnetic potentials}
\label{sec:spectrum-shift}

Akin to the Riemmanian setting \cite{Colbois2017}, positive shifts of the spectrum of the magnetic horizontal Laplacian can be nicely characterized through algebraic--differential conditions on the magnetic field $A$.

\begin{theorem} \label{theo:SpectrumShift}
    It holds that $\lambda_1(A) = 0$ iff 
    $\diffRm \horiz{A} = 0$ and
    \begin{equation}
    \label{eq:flux}
    \int_{\gamma} A \in 2\pi \mathbb Z , \ \textnormal{for every AC, closed, and horizontal curve} \ \gamma : [0,1] \to M .
    \end{equation}
\end{theorem}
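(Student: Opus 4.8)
The plan is to reduce the statement to the existence of a global, nowhere–vanishing solution of the first--order horizontal equation $(\diffH + i\horiz{A})\varphi = 0$, and then to read off the two conditions as, respectively, a \emph{local} (differential) integrability condition and a \emph{global} (monodromy) condition on the connection $\diffH + i\horiz{A}$. By Lemma~\ref{lemma:LaxMilgram} the infimum defining $\lambda_1(A)$ is attained at some $\varphi \in \Wsecond(M,\mathbb{C})$ with $\|\varphi\|_{L^2}=1$, and $\lambda_1(A)=0$ is equivalent to $\|\nablaRmA{A}\varphi\|^2_{L^2}=0$, i.e.\ to $\nablaRmA{A}\varphi = 0$, i.e.\ to the pointwise identity $\diffH\varphi = -i\,\horiz{A}\,\varphi$. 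The first preliminary step is to upgrade $\varphi$ to a genuine smooth function: since $\DeltaRmA{A}$ is built from the bracket--generating horizontal system $\diffH$, it is hypoelliptic, so any $L^2$ eigenfunction is smooth; I would invoke this subelliptic regularity explicitly, as it is the point where the contact (non--elliptic) setting departs from the Riemannian one.

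\textbf{Necessity.} Assume $\lambda_1(A)=0$ and take the smooth ground state $\varphi$ with $\diffH\varphi = -i\,\horiz{A}\,\varphi$. Since $\horiz{A}$ is real and $\diffH$ is a derivation on functions, $\diffH|\varphi|^2 = \overline\varphi\,\diffH\varphi + \varphi\,\overline{\diffH\varphi} = -i\horiz{A}|\varphi|^2 + i\horiz{A}|\varphi|^2 = 0$. As $\diffH f=0$ forces $\diff f$ to annihilate $\Dist$, hence $f$ to be constant along horizontal curves, horizontal connectivity (Remark~\ref{remark:horizComplete}) yields that $|\varphi|^2$ is constant, and $\|\varphi\|_{L^2}=1$ makes $\varphi$ nowhere vanishing. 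Writing $\varphi = \rho\,e^{iu}$ locally with $\rho$ constant gives $\diffH u = -\horiz{A}$ locally; since $\diffRm u = \diffH u$ and $\diffRm\diffRm = 0$ in the Rumin complex, we get $\diffRm\horiz{A} = -\diffRm\diffH u = 0$ on each chart, hence globally. For the flux condition, fix an AC closed horizontal $\gamma:[0,1]\to M$ and lift the phase of $\varphi(\gamma(t))/\rho$ to a continuous $u(t)$; then $\tfrac{d}{dt}\varphi(\gamma(t)) = \diffH\varphi(\dot\gamma) = -i\,\horiz{A}(\dot\gamma)\varphi$ gives $\dot u = -\horiz{A}(\dot\gamma) = -A(\dot\gamma)$ (using $\horiz{A}=A$ on horizontal vectors), whence $\int_\gamma A = u(0)-u(1)$. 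Single--valuedness of $\varphi$ and closedness of $\gamma$ force $u(1)-u(0)\in 2\pi\mathbb{Z}$, giving \eqref{eq:flux}.

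\textbf{Sufficiency.} Conversely, assume $\diffRm\horiz{A}=0$ and \eqref{eq:flux}. Fix $p_0$ and set $\varphi(p) \triangleq \exp\!\big(-i\int_{\gamma_{p_0,p}} A\big)$ over an AC horizontal path $\gamma_{p_0,p}$ (which exists by Remark~\ref{remark:horizComplete}). This is well defined: two such paths concatenate (after reversal, which preserves horizontality) to an AC closed horizontal loop, so by \eqref{eq:flux} the two integrals of $A$ differ by an element of $2\pi\mathbb{Z}$. To check smoothness and the equation, I use that $\diffRm\horiz{A}=0$ makes $\horiz{A}$ locally horizontally exact: by the Rumin--Poincaré lemma (equivalently, by Lemma~\ref{lemma:reductionDiffRm} together with Lemma~\ref{lemma:zeroHorizDiff}, solving $\diff(\horiz A - \delta(\Jop\horiz A)\alpha/n)=0$ locally) there are local $v_U$ with $\diffH v_U = \horiz{A}$ on $U$. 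For $q$ near $p$ joined to $p$ by a short horizontal $\sigma\subset U$ one has $\int_\sigma A = \int_\sigma \diffH v_U = v_U(q)-v_U(p)$, so $\varphi = \big[\varphi(p)e^{iv_U(p)}\big]e^{-iv_U}$ on $U$, which is smooth with $\diffH\varphi = -i\,\horiz{A}\,\varphi$. Thus $\nablaRmA{A}\varphi=0$, $|\varphi|\equiv 1$, and $\lambda_1(A)=0$.

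\textbf{Main obstacle.} The conceptual content is that $\lambda_1(A)=0$ is exactly the existence of a global horizontally--parallel unit section of the Hermitian line connection $\diffH + i\horiz{A}$, and that this splits into local flatness in horizontal directions (encoded by $\diffRm\horiz{A}=0$) and trivial holonomy around horizontal loops (encoded by \eqref{eq:flux}). The two delicate points I expect to need care are: (i) the smoothness of the ground state, which in the contact case rests on hypoellipticity rather than ordinary elliptic regularity; and (ii) the local solvability of $\diffH v = \horiz{A}$ from the purely Rumin condition $\diffRm\horiz{A}=0$, whose compatibility with the global path--integral construction is mediated throughout by Chow--Rashevskii horizontal connectivity.
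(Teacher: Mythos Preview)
Your argument is correct and shares the paper's conceptual skeleton---reduce $\lambda_1(A)=0$ to the existence of a smooth nowhere--vanishing solution of $(\diffH+i\horiz{A})\varphi=0$, invoke hypoelliptic regularity for smoothness, show $|\varphi|$ is constant, and then split the existence question into a local flatness condition ($\diffRm\horiz{A}=0$) and a holonomy condition (\eqref{eq:flux}). The technical implementation, however, is genuinely different. The paper passes to the universal cover $\pi:\widetilde M\to M$: using Lemma~\ref{lemma:reductionDiffRm} it lifts $\horiz{A}$ to a horizontally exact form $\diffH\widetilde f$ on $\widetilde M$, identifies $\varphi\circ\pi$ with $e^{i(\theta-\widetilde f)}$, and reads off both the necessity of the flux condition (via the deck--transformation periodicity $\widetilde f\circ G=\widetilde f+2\pi k_G$) and its sufficiency (by showing $e^{-i\widetilde f}$ descends to $M$). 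You instead work entirely on $M$: for necessity you integrate the phase ODE $\dot u=-A(\dot\gamma)$ directly along the horizontal loop, and for sufficiency you build $\varphi(p)=\exp(-i\int_{\gamma_{p_0,p}}A)$ by path integrals, check well--definedness from \eqref{eq:flux}, and obtain smoothness from a local Rumin--Poincar\'e primitive. Your route is more elementary and closer in spirit to the classical holonomy argument; the paper's universal--cover route packages the monodromy more globally and makes the analogy with the Riemannian result of Shigekawa transparent. Both rely on the same three ingredients you correctly single out: subelliptic regularity, Chow--Rashevskii connectivity, and the equivalence $\diffRm\horiz{A}=0\Leftrightarrow\diff(\horiz{A}-\delta(J\horiz{A})\alpha/n)=0$ from Lemma~\ref{lemma:reductionDiffRm}.
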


The proof makes use of the following

\begin{lemma} \label{lemma:nonZeroEigen}
    If there exists $\varphi \in C^{\infty}(M,\mathbb{C})$, $\varphi \neq 0$, such that $\diffH \varphi + i \horiz{A} \varphi = 0$, then $\diffRm \horiz{A} = 0$. In particular, there exists $f \in C^{\infty}(M,\mathbb{R})$ such that $\diff( A + f \alpha ) = 0$.
\end{lemma}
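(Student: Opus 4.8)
The plan is to convert the analytic hypothesis into a \emph{pointwise} statement about $\varphi$, from which local primitives for $\horiz{A}$ become available. First I would observe that, since $A$ (and hence $\horiz{A}$) is real-valued, conjugating the hypothesis gives $\diffH\bar\varphi = i\horiz{A}\bar\varphi$, so that
\[
\diffH|\varphi|^2 = \bar\varphi\,\diffH\varphi + \varphi\,\diffH\bar\varphi = -i\horiz{A}|\varphi|^2 + i\horiz{A}|\varphi|^2 = 0.
\]
By Lemma~\ref{lemma:zeroHorizDiff} this upgrades to $\diff|\varphi|^2 = 0$, and connectedness of $M$ forces $|\varphi|^2$ to equal a constant $c$; since $\varphi\neq 0$ we get $c>0$, so $\varphi$ is nowhere vanishing. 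This constancy of the modulus is, to my mind, the crux of the whole argument: it is precisely what converts the eigenvalue equation at the bottom of the spectrum into an algebraic-differential condition.

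Next I would exploit that $\varphi$ never vanishes. Writing $\varphi = \sqrt{c}\,e^{i\theta}$ on any simply connected chart (a lift of the $S^1$-valued map $\varphi/|\varphi|$, with $\theta$ smooth and real), one has $\diffH\varphi = i\varphi\,\diffH\theta$, and substituting into the hypothesis and dividing by the nonvanishing $i\varphi$ yields
\[
\horiz{A} = -\diffH\theta = \diffRm(-\theta) \quad\text{locally}.
\]
Since \eqref{eq:complexRumin} is a complex we have $\diffRm\circ\diffRm = 0$, so on each chart $\diffRm\horiz{A} = \diffRm\diffRm(-\theta) = 0$; as $\diffRm$ is a local (differential) operator, this gives $\diffRm\horiz{A} = 0$ on all of $M$. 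The fact that $\theta$ exists only locally, and only modulo constants, is harmless here precisely because we are asserting that a local operator annihilates the \emph{global} form $\horiz{A}$.

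For the final ``in particular'' clause I would invoke Lemma~\ref{lemma:reductionDiffRm}, which for $\horiz{A}\in\OmegaH^1(M)$ translates $\diffRm\horiz{A}=0$ into $\diff(\horiz{A}-\delta(\Jop\horiz{A})\alpha/n)=0$. Combining this with the decomposition $A = \horiz{A} + A(\Reeb)\alpha$ (which follows from $\horiz{A} = A - \alpha\wedge\iota_{\Reeb}A$), I would set
\[
f \triangleq -A(\Reeb) - \frac{\delta(\Jop\horiz{A})}{n}\in C^\infty(M,\mathbb{R}),
\]
so that $A + f\alpha = \horiz{A} - \delta(\Jop\horiz{A})\alpha/n$ and hence $\diff(A+f\alpha)=0$, as required.

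The only delicate point I anticipate is the bookkeeping across the local-to-global passage in the middle step, namely keeping track of the fact that the phase $\theta$ is defined only locally while the conclusion $\diffRm\horiz{A}=0$ is global; this is resolved by the remark that vanishing of the differential operator $\diffRm$ may be checked on the members of any open cover. Everything else reduces to direct applications of Lemma~\ref{lemma:zeroHorizDiff}, the complex property $\diffRm^2=0$, and Lemma~\ref{lemma:reductionDiffRm}.
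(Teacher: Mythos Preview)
Your proof is correct and follows essentially the same strategy as the paper: first show $|\varphi|$ is a nonzero constant via the conjugate trick and Lemma~\ref{lemma:zeroHorizDiff}, then use $\diffRm^2=0$ to kill $\diffRm\horiz{A}$, and finally invoke Lemma~\ref{lemma:reductionDiffRm}. The only difference is in the middle step: the paper carries out a single global computation
\[
\varphi\,\diffRm\horiz{A} \;=\; \diffRm(\varphi\horiz{A}) - \diffH\varphi\wedge\horiz{A} \;=\; i\bigl(\diffRm^2\varphi + \varphi\,\horiz{A}\wedge\horiz{A}\bigr) \;=\; 0,
\]
whereas you extract a local phase $\theta$ and write $\horiz{A}=\diffRm(-\theta)$ chartwise. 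Both arguments rest on the same two ingredients; the paper's version avoids your local-to-global bookkeeping, while yours sidesteps having to justify the Leibniz-type identity for $\diffRm$ at the level of the chosen representatives in $\OmegaRm^2(M)$. Your explicit formula for $f$ is a welcome addition over the paper's terse ``follows from Lemma~\ref{lemma:reductionDiffRm}''.
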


\begin{proof}
    The identity $\diffH \varphi + i \horiz{A} \varphi = 0$ is equivalent to $\diffH \overline{\varphi} - i \horiz{A} \overline{\varphi} = 0$, therefore $\diffH |\varphi|^2 
    = 0$. In particular, Lemma \ref{lemma:zeroHorizDiff} yields that $|\varphi| \neq 0$ is constant. Through a differentiation with respect to $\diffRm$ we thus conclude that
    $$
    \varphi \; \diffRm \horiz{A} = \diffRm( \varphi \horiz{A} ) - \diffH \varphi \wedge \horiz{A} = i ( \diffRm^2 \varphi + \varphi \; \horiz{A} \wedge \horiz{A} ) = 0 .
    $$
    The last claim follows from Lemma \ref{lemma:reductionDiffRm}.
\end{proof}

\begin{proof}[Proof of Theorem \ref{theo:SpectrumShift}]
    We split the proof into two sections.

    \vspace{5pt}
    
    \noindent\textit{Necessity.} Let $\lambda_1(A) = 0$, with eigenfunction $\varphi \in \Wsecond(M,\mathbb{C})$, $\| \varphi \|_{L^2} = 1$, so that $\diffH \varphi + i \horiz{A} \varphi = 0$ almost everywhere on $M$. In particular, we infer that $\DeltaRmA{A} \varphi = 0$. Given any neighborhood $U \subseteq M$, let $\{ \Reeb , E_1, \dots , E_{2n} \}$ denote a, e.g., Darboux frame defined in $U$. Thanks to Lemma \ref{lemma:MagnLap}, we thus infer that (see also \cite{Franceschi2020})
    \begin{align*}
        \sum^{2n}_{j=1} \int_U \varphi &\overline{E^2_j \psi + (\diver(E_j) - 2 i \horiz{A}(E_j)) E_j \psi} = \\
        &= \int_U ( i \delta \horiz{A}  - \norm{\horiz{A}}^2 ) \varphi \overline{\psi} , \ \textnormal{for every} \ \psi \in C^{\infty}_c(U,\mathbb{C}) .
    \end{align*}
    Since $M$ is a contact manifold, this latter variational problem falls into the hypoelliptic setting of \cite[Theorems 16 and 18]{Rothschild1976}, hence we infer that $E_{j_1} \dots E_{j_k} \varphi \in L^2(U,\mathbb{C})$, for $j_1,\dots,j_k=1,\dots,2n$ and $k \in \mathbb{N}$. But locally $\Reeb \in \textnormal{span} \{ E_1 , \dots , E_{2n} , [E_{i_1} , E_{i_2}] , i_1 , i_2 = 1 , \dots , 2n \}$, therefore the classical theory of Sobolev spaces finally yield $\varphi \in C^{\infty}(M,\mathbb{C})$ and $\diffH \varphi + i \horiz{A} \varphi = 0$ everywhere on $M$. Moreover, as in the proof of Lemma \ref{lemma:nonZeroEigen}, one shows that $|\varphi| \neq 0$ is constant. Up to a rescaling, we may thus assume that $|\varphi| = 1$.

    Let $\pi : \widetilde M \to M$ be the universal cover of $M$, which is regular given that $\widetilde M$ is simply connected. We define $\psi \triangleq \varphi \circ \pi \in C^{\infty}(\widetilde M,\mathbb{C})$, for which $|\psi| = 1$. By denoting $\widetilde{\horiz{A}} \triangleq \pi^* \horiz{A}$, Proposition \ref{Prop:contactCovering} thus yields that $\widetilde{\horiz{A}} \in \OmegaH^1(\widetilde M)$, and together with $\diffH \varphi + i \horiz{A} \varphi = 0$, that $\diffH \psi + i \widetilde{\horiz{A}} \psi = 0$. Now, besides 
    $\diffRm \horiz{A} = 0$, from Lemma \ref{lemma:nonZeroEigen} we also infer the existence of 
    $h \in C^{\infty}(M,\mathbb{R})$ such that $\diff( \horiz{A} + h \alpha ) = 0$. Hence, by denoting $\widetilde h \triangleq h \circ \pi$, we obtain that $\widetilde{\horiz{A}} + \widetilde h \widetilde \alpha \in \Omega^1(\widetilde M)$ is closed. In particular, there exists $\widetilde f \in C^{\infty}(\widetilde M,\mathbb{R})$ such that $\widetilde{\horiz{A}} = \diffH \widetilde f$, so that it holds that $\diffH e^{-i \widetilde f} + i \widetilde{\horiz{A}} e^{-i \widetilde f} = 0$. Note that the equation $\diffH \cdot + i \widetilde{\horiz{A}} \ \cdot = 0$ can have at most one solution in $C^{\infty}(\widetilde M,\mathbb{C})$ of constant norm equal to 1, up to multiplication by $e^{i \theta}$, $\theta \in \mathbb{R}$. Indeed, if $\psi_1 , \psi_2 \in C^{\infty}(\widetilde M,\mathbb{C})$ are two such solutions, as in the proof of Lemma \ref{lemma:nonZeroEigen}, one shows that $\diff (\psi_1 \psi^{-1}_2) = 0$, which implies that $\psi_2 = \psi_1 e^{i \theta}$, for some $\theta \in \mathbb{R}$. 
    Hence, $\varphi \circ \pi = e^{i (\theta - \widetilde f)}$ for some $\theta \in \mathbb{R}$, implying that
    \begin{equation} \label{eq:fAut}
        \textnormal{For every} \ G \in \textnormal{Deck}(\pi) , \ \textnormal{there exists} \ k_G \in \mathbb{Z} : \ \widetilde f \circ G = \widetilde f + 2 \pi k_G .
    \end{equation}
    
    Let $\gamma : [0,1] \to M$ be an AC, closed, and horizontal curve. For $j=1,\dots,p$, let $[a_j,b_j] \subseteq [0,1]$ so that $[0,1] = \cup^p_{j=1} [a_j,b_j]$, $a_{j+1} = b_j$, and $\gamma([a_j,b_j]) \subseteq \pi(\widetilde U_j)$, where $\widetilde U_j \subseteq \widetilde M$ are neighborhood such that $\pi_j \triangleq \pi|_{\widetilde U_j} : \widetilde U_j \to \pi(\widetilde U_j)$ are diffeomorphisms. Thus,
    $$
    \int_{\gamma} A = \sum^p_{j=1} \int^{b_j}_{a_j} \horiz{A}(\gamma(t)) \cdot \dot{\gamma}(t) \; \mathrm{d}t
    = \sum^p_{j=1} \int^{b_j}_{a_j} \diff \widetilde f\big( \pi_j^{-1}(\gamma(t)) \big) \cdot \frac{\mathrm{d}}{\mathrm{d}t} \pi_j^{-1}(\gamma(t)) \; \mathrm{d}t ,
    $$
    given that each $\pi^{-1}_j \circ \gamma : [a_j,b_j] \to \widetilde M$ is horizontal due to Proposition \ref{Prop:contactCovering}, so that
    \begin{equation} \label{eq:IntegralAProof}
        \int_{\gamma} A = \widetilde f\big( \pi_p^{-1}(\gamma(1)) \big) - \widetilde f\big( \pi_1^{-1}(\gamma(0)) \big) + \sum^{p-1}_{j=1} \left( \widetilde f\big( \pi_j^{-1}(\gamma(b_j)) \big) - \widetilde f\big( \pi_{j+1}^{-1}(\gamma(a_{j+1})) \big) \right) .
    \end{equation}
    Since $\pi$ is regular, for every $j=1,\dots,p-1$ there exist $G_j , G_p \in \textnormal{Deck}(\pi)$ such that $\pi_j^{-1}(\gamma(b_j)) = G_j\big( \pi_{j+1}^{-1}(\gamma(a_{j+1})) \big)$ and $\pi_p^{-1}(\gamma(1)) = G_p\big( \pi_1^{-1}(\gamma(0)) \big)$. Therefore, due to \eqref{eq:fAut}, for every $j=1,\dots,p-1$ there hold $\widetilde f\big( \pi_j^{-1}(\gamma(b_j)) \big) - \widetilde f\big( \pi_{j+1}^{-1}(\gamma(a_{j+1})) \big) = 2 \pi k_j$ and $\widetilde f\big( \pi_p^{-1}(\gamma(1)) \big) - \widetilde f\big( \pi_1^{-1}(\gamma(0)) \big) = 2 \pi k_p$, for appropriate $k_j , k_p \in \mathbb{Z}$. By plugging these latter identities in \eqref{eq:IntegralAProof}, we conclude the proof of this first section.

    \vspace{10px}

    \noindent\textit{Sufficiency.} 
    Let $\diffRm \horiz{A} = 0$. Lemma \ref{lemma:reductionDiffRm} yields that $\diff ( A + h \alpha ) = 0$ for some function $h \in C^{\infty}(M,\mathbb{R})$, and therefore, as in the proof of the previous section, we may find a function $\widetilde f \in C^{\infty}(\widetilde M,\mathbb{R})$ such that $\widetilde{\horiz{A}} \triangleq \pi^* \horiz{A} = \diffH \widetilde f$.
    
    Thanks to Propositions \ref{Prop:contactCovering} and Remark \ref{remark:horizComplete}, given any $p \in \widetilde M$ and $G \in \textnormal{Deck}(\pi)$ we may find an AC and horizontal curve $\widetilde \gamma : [0,1] \to \widetilde M$, such that $\widetilde \gamma(0) = p$ and $\widetilde \gamma(1) = G(p)$. Moreover, since Proposition \ref{Prop:contactCovering} yields that $\gamma \triangleq \pi \circ \widetilde \gamma : [0,1] \to M$ is an AC and horizontal curve that is closed at $\pi(p) \in M$, we may compute
    $$
    \widetilde f(G(p)) - \widetilde f(p) = \int^1_0 \diffH \widetilde f(\widetilde \gamma(t)) \cdot \dot{\widetilde \gamma}(t) \; \mathrm{d}t = \int_{\gamma} A \in 2 \pi \mathbb{Z} ,
    $$
    from which, due to the arbitrariness of $p \in \widetilde M$ and $G \in \textnormal{Deck}(\pi)$, we infer that \eqref{eq:fAut} still holds true. In particular, by defining $\psi \triangleq e^{-i \widetilde f} \in C^{\infty}(\widetilde M,\mathbb{C})$ with $|\psi| = 1$, thanks to \eqref{eq:fAut} one sees that $\psi \circ G = \psi$ for every $G \in \textnormal{Deck}(\pi)$. Thanks to this latter property and the fact that $\pi$ is a regular cover, the following function
    \begin{align*}
        \varphi : \ &M \to \mathbb{C} \\
        &p \mapsto \psi\big( (\pi|_{U_p})^{-1}(p) \big) , \quad p \in \pi(U_p) ,
    \end{align*}
    where $U_p \subseteq \widetilde M$ is any neighborhood such that $\pi|_{U_p} : U_p \to \pi(U_p)$ is a diffeomorphism, is well-defined, smooth, and of constant norm equal to 1. Moreover, by leveraging Proposition \ref{Prop:contactCovering}, straightforward computations yield that $\diffH \varphi + i \horiz{A} \varphi = 0$, from which we infer that $\lambda_1(A) = 0$, finally concluding the proof of Theorem \ref{theo:SpectrumShift}.
\end{proof}

\subsection{Upper bounds for the first eigenvalue}

Akin to the Riemmanian setting \cite{Colbois2017}, we now aim at deriving upper bounds for the first eigenvalue $\lambda_1(A)$ of the magnetic horizontal Laplacian. For this, we adapt and extend methods from \cite{Colbois2017}. The following lemma will be needed.

\begin{lemma} \label{lemma:EigenFunc}
    Let $\mu \in \Omega^1(M)$ and $h , \varphi \in C^{\infty}(M,\mathbb{C})$ be such that $|\varphi|=1\big/\sqrt{\textnormal{vol}(M)}$ on $M$, $\DeltaRmA{\mu} \varphi = h \varphi$, and $\lambda_1(\mu) = (\DeltaRmA{\mu} \varphi , \varphi)_{L^2}$. Then, $\lambda_1(\mu) = h = \textnormal{constant}$.
\end{lemma}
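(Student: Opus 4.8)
The plan is to use the hypotheses to promote $\varphi$ from a mere minimizer of the Rayleigh quotient to a genuine eigenfunction of $\DeltaRmA{\mu}$ with the \emph{constant} eigenvalue $\lambda_1(\mu)$; once this is known, the pointwise identity $\DeltaRmA{\mu}\varphi=h\varphi$ immediately forces $h$ to be constant. First I would record that the constant-modulus hypothesis gives $\|\varphi\|_{L^2}^2=\int_M|\varphi|^2=\textnormal{vol}(M)\cdot\textnormal{vol}(M)^{-1}=1$, so that $\varphi$ is an $L^2$--normalized competitor in the variational problem defining $\lambda_1(\mu)$, which it attains by the third hypothesis. Pairing $\DeltaRmA{\mu}\varphi=h\varphi$ with $\varphi$ in $L^2$ already yields $\lambda_1(\mu)=(\DeltaRmA{\mu}\varphi,\varphi)_{L^2}=\int_M h\,|\varphi|^2=\textnormal{vol}(M)^{-1}\int_M h$, i.e.\ $\lambda_1(\mu)$ is the mean of $h$; the goal is to upgrade this to $h\equiv\lambda_1(\mu)$.

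The main step is an Euler--Lagrange argument. For an arbitrary $\psi\in C^\infty(M,\mathbb{C})$ I would consider $t\mapsto R(t)\triangleq(\DeltaRmA{\mu}(\varphi+t\psi),\varphi+t\psi)_{L^2}\big/\|\varphi+t\psi\|_{L^2}^2$, which has a minimum at $t=0$ equal to $\lambda_1(\mu)$. Differentiating at $t=0$ and using that $\DeltaRmA{\mu}$ is symmetric on its domain (so $(\DeltaRmA{\mu}\psi,\varphi)_{L^2}=\overline{(\DeltaRmA{\mu}\varphi,\psi)_{L^2}}$), together with $\|\varphi\|_{L^2}=1$, gives the stationarity condition $\textnormal{Re}\,(\DeltaRmA{\mu}\varphi-\lambda_1(\mu)\varphi,\psi)_{L^2}=0$; replacing $\psi$ by $i\psi$ removes the real part and yields the full identity $(\DeltaRmA{\mu}\varphi-\lambda_1(\mu)\varphi,\psi)_{L^2}=0$ for every $\psi\in C^\infty(M,\mathbb{C})$, a class dense in $\Wsecond(M,\mathbb{C})$. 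Hence $\DeltaRmA{\mu}\varphi=\lambda_1(\mu)\varphi$ in $L^2$, and since $\varphi$ is smooth this is a pointwise identity. Alternatively, one may simply invoke the spectral discussion following Lemma~\ref{lemma:LaxMilgram}: as $\sigma(\DeltaRmA{\mu})$ is discrete and its infimum is attained, every normalized minimizer of the Rayleigh quotient lies in the eigenspace of $\min\sigma(\DeltaRmA{\mu})=\lambda_1(\mu)$.

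Finally I would combine the two pointwise identities $\DeltaRmA{\mu}\varphi=\lambda_1(\mu)\varphi$ and $\DeltaRmA{\mu}\varphi=h\varphi$ to get $\big(h-\lambda_1(\mu)\big)\varphi=0$ on $M$; since $|\varphi|=1/\sqrt{\textnormal{vol}(M)}\neq0$ everywhere, this gives $h\equiv\lambda_1(\mu)$, so $h$ is constant and equals $\lambda_1(\mu)$, as claimed. The only genuine work is the stationarity computation of the second paragraph: one must be careful that the admissible variations $\psi$ range over a set dense in $\Wsecond(M,\mathbb{C})$, that the symmetry of $\DeltaRmA{\mu}$ on this domain is available (guaranteed by the discussion preceding Lemma~\ref{lemma:LaxMilgram}), and that the resulting $L^2$--identity transfers to a pointwise one via the smoothness of $\varphi$. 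None of this is deep, but it is where the care lies.
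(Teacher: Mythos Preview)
Your proof is correct and follows essentially the same Euler--Lagrange route as the paper: both argue that a normalized minimizer of the Rayleigh quotient must be a genuine eigenfunction for $\lambda_1(\mu)$, and then conclude from $|\varphi|\neq 0$ that $h=\lambda_1(\mu)$. The only cosmetic difference is that the paper works with the shifted quadratic form $Q(\psi)=\|\nablaRmA{\mu}\psi\|_{L^2}^2-\lambda_1(\mu)\|\psi\|_{L^2}^2\ge 0$ (minimized at $\varphi$) and, after reaching $\textnormal{Re}\,(\DeltaRmA{\mu}\varphi-\lambda_1(\mu)\varphi,\psi)_{L^2}=0$, concludes by taking $\psi=\DeltaRmA{\mu}\varphi-\lambda_1(\mu)\varphi$, whereas you differentiate the Rayleigh quotient directly and use the $\psi\mapsto i\psi$ trick; both are equivalent ways of passing from the real--part identity to the full $L^2$--identity.
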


\begin{proof}
    By assumption and due to Lemma \ref{lemma:MagnLap}, the quadratic form
    \begin{align*}
        Q : \ &\Wfirst(M,\mathbb{C}) \to [0,+\infty) \\
        &\psi \mapsto \int_M \norm{\nablaRmA{\mu} \psi}^2 - \lambda_1(\mu) \| \psi \|^2_{L^2}
    \end{align*}
    attains its minimum at $\varphi \in C^{\infty}(M,\mathbb{C})$, therefore $( Q(\varphi + \varepsilon \psi) - Q(\varphi) ) / \varepsilon \ge 0$, for every $\psi \in \Wfirst(M,\mathbb{C})$ and $\varepsilon > 0$. This latter inequality and Lemma \ref{lemma:MagnLap} readily yield that
    $$
    \int_M \Big( \DeltaRmA{\mu} \varphi - \lambda_1(\mu) \varphi \Big) \overline{\psi} + \int_M \Big( \DeltaRmA{\mu} \psi - \lambda_1(\mu) \psi \Big) \overline{\varphi} = 0 , \quad \psi \in \Wsecond(M,\mathbb{C}) .
    $$
    Through Lemma \ref{lemma:MagnLap}, we may further compute
    \begin{align*}
        2 \ &\textnormal{Re}\left( \int_M \Big( \DeltaRmA{\mu} \varphi - \lambda_1(\mu) \, \varphi \Big) \overline{\psi} \right) = \\
        &= \int_M \DeltaRmA{\mu} \varphi \, \overline{\psi} + \int_M \DeltaRmA{\mu} \psi \, \overline{\varphi} - \int_M \lambda_1(\mu) ( \varphi \, \overline{\psi} + \psi \, \overline{\varphi} ) = 0 , \quad \psi \in \Wsecond(M,\mathbb{C}) ,
    \end{align*}
    and plugging $\psi = \DeltaRmA{\mu} \varphi - \lambda_1(\mu) \varphi \in \Wsecond(M,\mathbb{C})$ in the latter identity yields that $\varphi$ is eigenfunction associated with $\lambda_1(\mu)$, therefore $\lambda_1(\mu) = h = \textnormal{constant}$.
\end{proof}

We have the following upper bounds for the first eigenvalue $\lambda_1(A)$.

\begin{theorem} \label{theo:bounds}
    Let $\horiz{A} = \xi \oplus (\diffH f + \deltaRm \eta)$ as in Proposition \ref{prop:deRhamDec}, and define
    \begin{align*}
        \LH^1(M) \triangleq \Big\{ \omega \in \OmegaH^1(M) : \ &\omega \textnormal{\ is Rumin--harmonic and} \int_{\gamma} \omega \in 2 \pi \mathbb{Z} , \\
        &\textnormal{for every AC, closed, and horizontal} \ \gamma : [0,1] \to M \Big\} .
    \end{align*}
    It holds that
    \begin{equation} \label{eq:newUpperBound}
        \lambda_1(A) \le \frac{\textnormal{dist}(\xi , \LH^1(M))^2 + \| \deltaRm \eta \|^2_{L^2}}{\textnormal{vol}(M)} .
    \end{equation}
    In case $\diffRm \horiz{A} = 0$, bound \eqref{eq:newUpperBound} reduces to
    \begin{equation} \label{eq:newShortUpperBound}
        \lambda_1(A) \le \frac{\textnormal{dist}(\xi , \LH^1(M))^2}{\textnormal{vol}(M)} .
    \end{equation}
    Moreover, if equality holds in bound \eqref{eq:newShortUpperBound}, then there exists $\omega \in \LH^1(M)$ such that
    \begin{equation} \label{eq:constantForm}
        \lambda_1(A) = \norm{\xi - \omega}^2 = \textnormal{constant} , \quad \textnormal{pointwise on} \ M.
    \end{equation}
    Finally, in case $M$ is K--contact and $\horiz{A}$ is basic, let $\horiz{A} = \xi \oplus (\diffH f + \deltaRm \eta)$ as in Corollary \ref{corol:deRhamDec}. All the previous statements hold true with $\LH^1(M)$ replaced by
    \begin{align*}
        \LHB^1(M) \triangleq \Big\{ \omega \in \OmegaH^1(M) : \ &\omega \textnormal{\ is harmonic and basic, and} \int_{\gamma} \omega \in 2 \pi \mathbb{Z} , \\
        &\textnormal{for every AC, closed, and horizontal} \ \gamma : [0,1] \to M \Big\} .
    \end{align*}
\end{theorem}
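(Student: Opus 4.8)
The plan is to follow the variational strategy of \cite{Colbois2017}, adapted to the Rumin setting. The essential idea is to use suitable test functions built from the lattice $\LH^1(M)$ in the Rayleigh quotient that defines $\lambda_1(A)$. First I would exploit Gauge invariance (Lemma~\ref{lemma:Gauge}) together with the Hodge--Rumin decomposition $\horiz{A} = \xi \oplus (\diffH f + \deltaRm \eta)$: since $\diffH f$ is a horizontal exact term, it contributes nothing to $\lambda_1$, so it suffices to bound the first eigenvalue associated with $\xi \oplus \deltaRm \eta$. The key observation is that for any $\omega \in \LH^1(M)$, Theorem~\ref{theo:SpectrumShift} guarantees the existence of a nowhere-vanishing function $\varphi_\omega \in C^{\infty}(M,\mathbb{C})$ with $|\varphi_\omega| = 1/\sqrt{\textnormal{vol}(M)}$ solving $\diffH \varphi_\omega + i \horiz{\omega}\varphi_\omega = 0$, since $\omega$ is Rumin--harmonic (hence $\diffRm \omega = 0$) with integer fluxes.

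The main computation would then be to insert $\varphi_\omega$ as a test function into the magnetic Rayleigh quotient for $A$. Using $\diffH \varphi_\omega = -i\omega\varphi_\omega$ and Lemma~\ref{lemma:MagnLap}, one computes
\begin{align*}
    \lambda_1(A) \le \int_M \norm{\nablaRmA{A} \varphi_\omega}^2
    = \int_M \norm{(\diffH + i\horiz{A})\varphi_\omega}^2
    = \int_M \norm{\horiz{A} - \omega}^2 |\varphi_\omega|^2 ,
\end{align*}
where the last step uses $(\diffH + i\horiz{A})\varphi_\omega = i(\horiz{A} - \omega)\varphi_\omega$. Since $|\varphi_\omega|^2 = 1/\textnormal{vol}(M)$ is constant, this gives $\lambda_1(A) \le \|\horiz{A} - \omega\|^2_{L^2}/\textnormal{vol}(M)$. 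Now I would expand $\|\horiz{A} - \omega\|^2_{L^2}$ using the $L^2$--orthogonal decomposition: because $\omega$ is Rumin--harmonic and co--closed while $\diffH f$ is horizontal-exact, the cross terms vanish, leaving $\|\xi - \omega\|^2_{L^2} + \|\diffH f\|^2_{L^2} + \|\deltaRm\eta\|^2_{L^2}$. The Gauge invariance removes the $\diffH f$ contribution entirely, and minimizing over $\omega \in \LH^1(M)$ yields \eqref{eq:newUpperBound}. When $\diffRm\horiz{A} = 0$, Proposition~\ref{prop:deRhamDec} forces $\deltaRm\eta = 0$, giving \eqref{eq:newShortUpperBound}.

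For the equality/rigidity statement \eqref{eq:constantForm}, I would argue that if equality holds in \eqref{eq:newShortUpperBound} with minimizing $\omega$, then $\varphi_\omega$ is an actual minimizer of the Rayleigh quotient, hence an eigenfunction. Applying Lemma~\ref{lemma:EigenFunc} with $\mu = A$, $h = \DeltaRmA{A}\varphi_\omega/\varphi_\omega$, and $|\varphi_\omega| = 1/\sqrt{\textnormal{vol}(M)}$ forces $\lambda_1(A) = h$ to be constant; combined with the explicit expression $\DeltaRmA{A}\varphi_\omega = \norm{\horiz{A} - \omega}^2 \varphi_\omega$ (again via Lemma~\ref{lemma:MagnLap}, using $\diffRm\horiz{A} = 0$ so that after gauge-reduction $\horiz{A}$ may be taken co--closed), this yields $\lambda_1(A) = \norm{\xi - \omega}^2$ pointwise. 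Finally, the K--contact case reduces to Corollary~\ref{corol:deRhamDec} in place of Proposition~\ref{prop:deRhamDec}, so that the harmonic part $\xi$ may be taken basic and the identical argument applies with $\LHB^1(M)$; the only point requiring care is checking that the test-function construction and orthogonality survive when the lattice consists of classically harmonic basic forms rather than merely Rumin--harmonic ones, which follows since Lemma~\ref{lemma:reductionDiffRm} identifies these notions for basic forms. I expect the main obstacle to be rigorously justifying the orthogonality of the cross terms in the expansion of $\|\horiz{A} - \omega\|^2_{L^2}$ and, in the equality case, confirming that $\DeltaRmA{A}\varphi_\omega$ has the clean pointwise form needed to deduce constancy of $\norm{\xi - \omega}$ rather than merely its integral.
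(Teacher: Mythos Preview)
Your proposal is correct and follows essentially the same route as the paper: gauge away $\diffH f$ first, build the unimodular test function $\varphi_\omega$ from each $\omega\in\LH^1(M)$ (the paper does this explicitly as $\varphi_\omega=e^{-i\int\omega}$ along horizontal paths, which is precisely the construction in the sufficiency direction of Theorem~\ref{theo:SpectrumShift} that you invoke), plug it into the Rayleigh quotient for $\xi\oplus\deltaRm\eta$, and for the equality case apply Lemma~\ref{lemma:EigenFunc} with $\mu=\xi$ so that $\delta\horiz{\mu}=0$ and hence $\DeltaRmA{\xi}\varphi_\omega=\norm{\xi-\omega}^2\varphi_\omega$ exactly. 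The only cleanup your writeup needs is to insert the test function into the quotient for $\xi\oplus\deltaRm\eta$ rather than for $A$ itself, since gauge invariance changes $\lambda_1$, not an already-obtained upper bound; once this is done the $\|\diffH f\|^2_{L^2}$ term never appears and the orthogonality you worried about reduces to $(\xi-\omega)\perp\deltaRm\eta$, which is immediate from $\diffRm(\xi-\omega)=0$.
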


\begin{proof}
    Since \eqref{eq:newShortUpperBound} follows from \eqref{eq:newUpperBound} and Proposition \ref{prop:deRhamDec}, and the last claim of Theorem \ref{theo:bounds} follows from Corollary \ref{corol:deRhamDec}, we just focus on proving \eqref{eq:newUpperBound} and \eqref{eq:constantForm}.

    To prove \eqref{eq:newUpperBound}, fix $\omega \in \LH^1(M)$ and $p_0 \in M$, and define $h(p) \triangleq \int_{\gamma_{p_0,p}} \omega$, where $\gamma_{p_0,p} : [0,1] \to M$ is any AC and horizontal curve that joins $p_0$ with $p \in M$, and whose existence is granted by Remark \ref{remark:horizComplete}. Since $\int_{\gamma} \omega \in 2 \pi \mathbb Z$ for every AC, closed, and horizontal curve $\gamma : [0,1] \to M$, the values of $h$ are uniquely defined mod $2 \pi$ for each $p \in M$. Now, fix $p \in M$ and let $\{ E_1 , \dots , E_{2n} , E_{2n+1} \triangleq R \}$ be any, e.g., Darboux frame defined in a neighborhood of $p$. The mapping $\psi(t_1,\dots,t_{2n+1}) \triangleq \big(\theta^{E_1}_{t_1} \circ \dots \circ \theta^{E_{2n+1}}_{t_{2n+1}}\big)(p)$, where $\theta^X$ denotes the flow of any $X \in \Gamma(T M)$, is a local diffeomorphism. 
    Therefore,
    \begin{align*}
        \frac{h \circ \psi(\varepsilon e_j) - h(p)}{\varepsilon} &= \frac{1}{\varepsilon} \int_{t \in [0,1] \mapsto \theta^{E_j}_{t \varepsilon}(p)} \omega \\
        &= \int^1_0 \omega( \theta^{E_j}_{t \varepsilon}(p) ) \cdot E_j( \theta^{E_j}_{t \varepsilon}(p) ) \; \mathrm{d}t , \quad j=1,\dots,2n ,
    \end{align*}
    which converges to $\omega(p) \cdot E_j(p)$ for $\varepsilon \to 0$, for each $j=1,\dots,2n$. It follows that the function $\varphi \triangleq e^{-i h}$ is well-defined and satisfies $\diffH \varphi + i \omega \varphi = 0$, from which one additionally infers that $\varphi \in C^{\infty}(M,\mathbb{C})$ by replicating the proof of Theorem \ref{theo:SpectrumShift}.

    At this step, thanks to Lemmas \ref{lemma:reductionDiffRm} and \ref{lemma:MagnLap}, for any $\mu \in \Omega^1(M)$ we may compute
    \begin{equation} \label{eq:DeltaForPhiProof}
        \DeltaRmA{\mu} \varphi = i \scalar{\diff \varphi}{\omega} + \norm{\horiz{\mu}}^2 \varphi - 2 \scalar{\omega}{\horiz{\mu}} \varphi + i \delta \horiz{\mu} \varphi = ( \norm{\horiz{\mu} - \omega}^2 + i \delta \horiz{\mu} ) \varphi .
    \end{equation}
    Combining this latter identity with Lemma \ref{lemma:Gauge} and the fact that $|\varphi| = 1$ finally yields
    \begin{align*}
        \lambda_1(A) &= \lambda_1(\horiz{A} - \diffH f) \le \frac{(\DeltaRmA{\xi \oplus \deltaRm \eta} \varphi , \varphi)_{L^2}}{\textnormal{vol}(M)} = \frac{\| \xi - \omega \|^2_{L^2} + \| \deltaRm \eta \|^2_{L^2}}{\textnormal{vol}(M)} .
    \end{align*}

    To prove \eqref{eq:constantForm}, let equality holds in \eqref{eq:newShortUpperBound}. By leveraging the latter computations, Lemma \ref{lemma:Gauge}, and the fact that $|\varphi|=1$, we infer the existence of $\omega \in \LH^1(M)$ such that
    $$
    \lambda_1(\xi) = \lambda_1(A) = \frac{\| \xi - \omega \|^2_{L^2}}{\textnormal{vol}(M)} = \left( \DeltaRmA{\xi} \frac{\varphi}{\sqrt{\textnormal{vol}(M)}} , \frac{\varphi}{\sqrt{\textnormal{vol}(M)}} \right)_{L^2} ,
    $$
    which, due to 
    Lemma \ref{lemma:EigenFunc}, finally yields that $\lambda_1(A) = \lambda_1(\xi) = \norm{\xi - \omega}^2 = \textnormal{constant}$. 
\end{proof}

\subsection{The K--contact regular case} \label{subsec:circleBundle}

Computations for the bounds on $\lambda_1(A)$ simplify in the case of K--contact and regular manifolds. In the rest of this section, we pick $A \in \Omega^1(M)$ real-valued 
with $\diffRm \horiz{A} = 0$.

We start by listing some known facts about regular manifolds, that are mainly taken from \cite{Geiges2008,Blair2010}. A nowhere vanishing vector field $X \in \Gamma(TM)$ is called \textit{regular} if around each point of $M$ there is a flow box that is pierced at most once by any integral curve of $X$. We say that $M$ is \textit{regular} if its Reeb is a regular vector field. From now on, we assume $M$ is K--contact and regular. The Boothby--Wang theorem yields the existence of a constant $c \in \mathbb{R}$ and of a $2 n$--dimensional compact symplectic manifold $(B,\Omega)$, associated with an integral co-cycle $0 \neq [ \Omega ]_{\textnormal{hm}} \in H^2_{\textnormal{hm}}(B,\mathbb{Z})$, such that $M$ is the bundle space of a circle bundle $\pi : M \to B$ satisfying $\diff (c \, \alpha) = \pi^* \Omega$. In addition, the scaled Reeb $\Reeb / c$, associated with the scaled contact 1--form $c \, \alpha$, generates the free $\mathbb{S}^1$--action $\Theta : \mathbb{S}^1 \times M \to M$ on $\pi : M \to B$. From now on, without loss of generality, we denote $c \, \alpha$ with $\alpha$ and $\Reeb / c$ with $\Reeb$. Since $\Theta$ generates the flow of the Reeb and $\mathcal{L}_{\Reeb} \Jop = 0$ in K--contact settings, we infer that $J$ is right $\mathbb{S}^1$--invariant:
\begin{equation} \label{eq:JS1invariant}
    \Theta( \vartheta , \cdot )^* \Jop = \diff \Theta( \vartheta , \cdot ) \cdot \Jop , \quad \vartheta \in \mathbb{S}^1 .
\end{equation}
Moreover, the distribution $\Dist = \textnormal{ker} \ \alpha$ is a connection on $M$, thanks to which, for any $q \in B$ and $p \in \pi^{-1}(q)$, the horizontal lift $\widetilde v_p \in T_p M$ of any $v_q \in T_q B$ is uniquely well--defined such that $\widetilde v_p \in \Dist(p)$ and $\diff \pi(p) \cdot \widetilde v_p = v_q$. Also, $\widetilde v_p$ is right $\mathbb{S}^1$--invariant:
\begin{equation} \label{eq:liftS1invariant}
    \widetilde v_{\Theta( \vartheta , p )} = \diff \Theta( \vartheta , \cdot )(p) \cdot \widetilde v_p , \quad \vartheta \in \mathbb{S}^1 , \quad q \in B , \quad p \in \pi^{-1}(q) , \quad \textnormal{and} \quad v_q \in T_q B .
\end{equation}
Finally, since $\mathcal{L}_R \diff \alpha = 0$, we infer that also $\diff \alpha$ is right $\mathbb{S}^1$--invariant:
\begin{equation} \label{eq:dalphaS1invariant}
    \Theta(\vartheta,\cdot)^* \diff \alpha = \diff \alpha , \quad \vartheta \in \mathbb{S}^1 .
\end{equation}
Due to \eqref{eq:JS1invariant}--\eqref{eq:dalphaS1invariant}, we easily conclude that we may equip $B$ with the well--defined Riemannian metric $\metricB(q)(v_q,w_q) \triangleq \metric(p)(\widetilde v_p,\widetilde w_p)$ for $q \in B$ and $v_q , w_q \in T_q B$, and any $p \in \pi^{-1}(q)$. Its norm is denoted by $\normB{\cdot}$. Note that $\pi : (M,\metric) \to (B,\metricB)$ is a Riemannian submersion. We denote by $\scalarB{\cdot}{\cdot}: \Omega^{\bullet}(B) \times \Omega^{\bullet}(B) \to C^{\infty}(B,\mathbb{C})$ the scalar product on the external algebra of complex forms $\Omega^{\bullet}(B)$ induced by $\metricB$, and with no confusion arising, $\normB{\cdot}$ denotes the norm associated with $\scalarB{\cdot}{\cdot}$.

We have the following.

\begin{proposition} \label{proposition:Gysin}
    It holds that $H^1_{\textnormal{dR}}(B,\mathbb{R}) \overset{\pi^*}{\simeq} H^1_{\textnormal{dR}}(M,\mathbb{R})$. In particular, given $\omega \in \Omega^1(M)$ real-valued such that $\diffRm \horiz{\omega} = 0$, there exist $f , h \in C^{\infty}(M,\mathbb{R})$ and $\xi_B \in \Omega^1(B)$ harmonic and real-valued such that $\omega + \diff f + h \alpha = \pi^* \xi_B$. Moreover, the 1--form $\pi^* \xi_B$ is horizontal, basic, and itself harmonic.
\end{proposition}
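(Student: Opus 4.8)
The plan is to handle the two assertions in turn: first the cohomological isomorphism, then the explicit gauge decomposition together with the stated properties of $\pi^* \xi_B$.

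For the isomorphism, the natural tool — as the label suggests — is the Gysin sequence of the oriented circle bundle $\pi : M \to B$. Its segment around degree one reads
\begin{equation*}
    H^{-1}_{\textnormal{dR}}(B) \overset{\cup\, e}{\longrightarrow} H^1_{\textnormal{dR}}(B) \overset{\pi^*}{\longrightarrow} H^1_{\textnormal{dR}}(M) \longrightarrow H^0_{\textnormal{dR}}(B) \overset{\cup\, e}{\longrightarrow} H^2_{\textnormal{dR}}(B) ,
\end{equation*}
where $e$ is the real Euler class, which by the Boothby--Wang construction equals the nonzero de Rham class $[\Omega]$. Since $H^{-1}_{\textnormal{dR}}(B) = 0$, the arrow $\pi^*$ is injective. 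Since $H^0_{\textnormal{dR}}(B) = \mathbb{R}$ and cup product with $e$ sends the constant function $1$ to $[\Omega] \neq 0$, the last arrow is injective, so by exactness the connecting map $H^1_{\textnormal{dR}}(M) \to H^0_{\textnormal{dR}}(B)$ vanishes and $\pi^*$ is surjective. Hence $\pi^*$ is an isomorphism in degree one.

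To obtain the decomposition, I would start from $\diffRm \horiz{\omega} = 0$ and invoke Lemma \ref{lemma:reductionDiffRm} to produce $h \in C^{\infty}(M,\mathbb{R})$ with $\omega + h\alpha$ closed. Its class lies in $H^1_{\textnormal{dR}}(M,\mathbb{R})$, so by the isomorphism it equals $\pi^*[\zeta]$ for some closed $\zeta \in \Omega^1(B)$; replacing $\zeta$ by the unique harmonic representative $\xi_B$ of its class (Hodge theory applies, $B$ being compact for $\metricB$) gives $\omega + h\alpha = \pi^*\xi_B + \diff g$ on $M$ for some $g \in C^{\infty}(M,\mathbb{R})$. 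Setting $f \triangleq -g$ yields $\omega + \diff f + h\alpha = \pi^*\xi_B$, with $\xi_B$ harmonic on $B$ by construction. It then remains to check that $\pi^*\xi_B$ is horizontal, basic, and harmonic. Horizontality is immediate: $\Reeb$ is vertical, so $\diff\pi(\Reeb) = 0$ and $\iota_{\Reeb}\pi^*\xi_B = \xi_B(\diff\pi(\Reeb)) = 0$. Basicness holds because the flow of $\Reeb$ is the fibrewise action $\Theta$, satisfying $\pi\circ\Theta(\vartheta,\cdot) = \pi$; thus $\Theta(\vartheta,\cdot)^*\pi^*\xi_B = \pi^*\xi_B$, and differentiating at $\vartheta = 0$ gives $\mathcal{L}_{\Reeb}\pi^*\xi_B = 0$. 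Closedness is clear from $\diff\pi^*\xi_B = \pi^*\diff\xi_B = 0$.

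The genuine point, and the step I expect to be the main obstacle, is co-closedness $\delta\pi^*\xi_B = 0$. For that, I would use that $\pi : (M,\metric)\to(B,\metricB)$ is a Riemannian submersion whose fibres are the Reeb orbits. As $M$ is K--contact, $\Reeb$ is a unit Killing field, so $\metric(\nabla_{\Reeb}\Reeb,X) = -\metric(\nabla_X\Reeb,\Reeb) = -\tfrac12 X(\metric(\Reeb,\Reeb)) = 0$ for every $X$, i.e.\ $\nabla_{\Reeb}\Reeb = 0$: the one--dimensional fibres are geodesics, hence totally geodesic and in particular minimal. Writing $\delta\pi^*\xi_B = -\sum_{i=1}^{2n}(\nabla_{E_i}\pi^*\xi_B)(E_i) - (\nabla_{\Reeb}\pi^*\xi_B)(\Reeb)$ in an orthonormal frame $\{\Reeb, E_1,\dots,E_{2n}\}$ with the $E_i$ basic horizontal lifts of an orthonormal frame on $B$, the horizontal sum reproduces $\pi^*(\delta_B\xi_B)$ by the O'Neill relations (the $A$--tensor contributions lie in the vertical directions, which the horizontal form $\pi^*\xi_B$ annihilates), while the vertical term satisfies $(\nabla_{\Reeb}\pi^*\xi_B)(\Reeb) = -\pi^*\xi_B(\nabla_{\Reeb}\Reeb) = 0$ since $\pi^*\xi_B$ is horizontal. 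As $\xi_B$ is harmonic, $\delta_B\xi_B = 0$, whence $\delta\pi^*\xi_B = 0$. Together with closedness this shows $\pi^*\xi_B$ is harmonic on $M$, completing the argument.
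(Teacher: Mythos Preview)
Your proof is correct and follows essentially the same approach as the paper: both use the Gysin sequence with the nonvanishing of $[\Omega]$ for the cohomological isomorphism, then Lemma~\ref{lemma:reductionDiffRm} together with Hodge decomposition on $B$ for the gauge decomposition. The only noteworthy difference is in the co-closedness step: the paper simply invokes that $\pi^*$ commutes with $\delta$ on $1$--forms for a Riemannian submersion with minimal fibres (citing \cite[Theorem~1.3]{Gilkey1999}), whereas you supply the argument directly---showing $\nabla_\Reeb\Reeb=0$ from the Killing property and then computing $\delta\pi^*\xi_B$ in an adapted frame. Your version is more self-contained; the paper's is more concise.
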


\begin{proof}
    The first claim is classical. Below is the proof for the sake of completeness.

    Since $[\Omega]_{\textnormal{dR}} \neq 0$, we obtain that the Gysin long sequence 
    \begin{equation} \label{eq:GysinProof}
        \cdots \rightarrow H^1_{\textnormal{dR}}(B,\mathbb{R}) \xrightarrow{\pi^*} H^1_{\textnormal{dR}}(M,\mathbb{R}) \xrightarrow{\pi_*} H^0_{\textnormal{dR}}(B,\mathbb{R}) \xrightarrow{[\Omega]_{\textnormal{dR}} \wedge} H^2_{\textnormal{dR}}(B,\mathbb{R})\rightarrow \cdots
    \end{equation}
    is exact. On the one hand, $\pi^*$ is injective given that $\pi$ is a surjective submersion. On the other hand, since $[\Omega]_{\textnormal{dR}} \neq 0$ implies that $\textnormal{ker}([\Omega]_{\textnormal{dR}} \wedge \cdot) = \{ 0 \}$, we infer that $\pi^*$ is also surjective due to the exactness of the Gysin long sequence \eqref{eq:GysinProof}.

    For the second claim, Lemma \ref{lemma:reductionDiffRm} and the first claim yield the existence of $f , h \in C^{\infty}(M,\mathbb{R})$ and $\xi_B \in \Omega^1(B)$ closed and real-valued such that $A + \diff f + h \alpha = \pi^* \xi_B$. We note that we can select $\xi_B$ harmonic thanks to Hodge decomposition. Moreover, since $\Theta$ generates the flow of the Reeb, by leveraging any Darboux frame one readily checks that $\pi^* \xi_B$ is horizontal and basic. Finally, we conclude that $\pi^* \xi_B$ is itself harmonic from the fact that $\pi^*$ and $\delta$ commute on 1--forms, given that $\pi : (M,\metric) \to (B,\metricB)$ is a Riemannian submersion and 
    the fibres of $\pi$ are minimal \cite[Theorem 1.3]{Gilkey1999}.
\end{proof}

The following corollary of Theorem \ref{theo:bounds} states that, in the K--contact regular case, computing the upper bound \eqref{eq:newShortUpperBound} boils down to computing integrals on the basis.

\begin{corollary} \label{corol:BoundBasis}
    Assume that $A \in \Omega^1(M)$ satisfies $\diffRm \horiz{A} = 0$, and decompose $A + \diff f + h \alpha = \pi^* \xi_B$ as in Proposition \ref{proposition:Gysin}. It holds that
    \begin{equation} \label{eq:newShortUpperBoundBasis}
        \lambda_1(A) \le \frac{\textnormal{dist}(\pi^* \xi_B , \LHB^1(M))^2}{\textnormal{vol}(M)} = \frac{\textnormal{dist}(\xi_B , \Lfrak^1(B))^2}{\textnormal{vol}(B)} ,
    \end{equation}
    where
    \begin{align*}
        \Lfrak^1(B) \triangleq \Big\{ \omega \in \Omega^1(B) : \ &\omega \textnormal{\ is harmonic and} \int_{\gamma} \omega \in 2 \pi \mathbb{Z} , \\
        &\textnormal{for every AC and closed} \ \gamma : [0,1] \to B \Big\} .
    \end{align*}
    Moreover, if equality holds in bound \eqref{eq:newShortUpperBoundBasis}, then there exists $\omega_B \in \Lfrak^1(M)$ such that
    \begin{equation} \label{eq:newConstantForm}
        \lambda_1(A) = \normB{\xi_B - \omega_B}^2 = \textnormal{constant} , \quad \textnormal{pointwise on} \ B.
    \end{equation}
\end{corollary}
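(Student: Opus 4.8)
The plan is to reduce everything, via Gauge invariance, to the gauge--equivalent potential $\pi^* \xi_B$ furnished by Proposition \ref{proposition:Gysin}, and then to transfer the bound of Theorem \ref{theo:bounds} from $M$ down to $B$. First I would invoke Lemma \ref{lemma:Gauge} to write $\lambda_1(A) = \lambda_1(A + \diff f + h\alpha) = \lambda_1(\pi^* \xi_B)$. Since Proposition \ref{proposition:Gysin} guarantees that $\pi^* \xi_B$ is horizontal, basic, and harmonic, it is in particular Rumin--harmonic by Lemma \ref{lemma:reductionDiffRm}, so its Corollary \ref{corol:deRhamDec} decomposition is trivial, with harmonic basic part exactly $\xi = \pi^* \xi_B$. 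Applying the K--contact version of Theorem \ref{theo:bounds} to $\pi^* \xi_B$ then yields $\lambda_1(A) \le \textnormal{dist}(\pi^*\xi_B, \LHB^1(M))^2 / \textnormal{vol}(M)$, which is the left--hand inequality in \eqref{eq:newShortUpperBoundBasis}.

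Next I would establish that $\pi^*$ restricts to a bijection $\Lfrak^1(B) \to \LHB^1(M)$. On the level of forms, $\pi^*$ identifies harmonic $1$--forms on $B$ with harmonic, basic, horizontal $1$--forms on $M$: pullbacks are harmonic because $\pi$ is a Riemannian submersion with minimal fibres (as exploited in Proposition \ref{proposition:Gysin}), and every harmonic basic horizontal form descends, since basic horizontal forms are precisely pullbacks from $B$. It remains to match the period lattices. For $\omega_B \in \Lfrak^1(B)$ and any AC closed horizontal $\gamma$ in $M$, the projection $\pi\circ\gamma$ is an AC closed curve in $B$, so $\int_\gamma \pi^*\omega_B = \int_{\pi\circ\gamma}\omega_B \in 2\pi\mathbb{Z}$; hence $\pi^* \omega_B \in \LHB^1(M)$. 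The reverse inclusion is the crux: given $\omega = \pi^*\omega_B \in \LHB^1(M)$ and a loop $\sigma$ in $B$, I must show $\int_\sigma \omega_B \in 2\pi\mathbb{Z}$, but the horizontal lift of $\sigma$ need not close up because of the bundle holonomy. I would resolve this homologically. Since $\pi_1(M) \to \pi_1(B)$ is surjective (the fibre $\mathbb{S}^1$ is connected), every class in $H_1(B;\mathbb{Z})$ is $\pi_*[\gamma_0]$ for some loop $\gamma_0$ in $M$; by the density of horizontal loops in each homotopy class, a standard consequence of Chow--Rashevskii (Remark \ref{remark:horizComplete}), $\gamma_0$ may be taken horizontal. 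As $\omega$ is closed, $\int_\sigma \omega_B = \int_{\gamma_0} \pi^*\omega_B = \int_{\gamma_0}\omega \in 2\pi\mathbb{Z}$, giving $\omega_B \in \Lfrak^1(B)$. I expect this homological realization of $H_1(B;\mathbb{Z})$ by horizontal loops, circumventing the non--closedness of horizontal lifts, to be the main obstacle.

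With the lattice bijection in hand, the distance identity follows from a Fubini computation for the Riemannian submersion $\pi$. For a horizontal form $\pi^*\zeta$ one has the pointwise equality $\norm{\pi^*\zeta}(p) = \normB{\zeta}(\pi(p))$, and since the $\mathbb{S}^1$--fibres all share a common length $\ell$, integrating along fibres gives $\|\pi^*\zeta\|^2_{L^2(M)} = \ell\,\|\zeta\|^2_{L^2(B)}$ and $\textnormal{vol}(M) = \ell\,\textnormal{vol}(B)$. Writing every $\omega \in \LHB^1(M)$ as $\pi^*\omega_B$ with $\omega_B \in \Lfrak^1(B)$, the common factor $\ell$ cancels, so that $\textnormal{dist}(\pi^*\xi_B, \LHB^1(M))^2 / \textnormal{vol}(M) = \textnormal{dist}(\xi_B, \Lfrak^1(B))^2 / \textnormal{vol}(B)$, completing the chain of equalities in \eqref{eq:newShortUpperBoundBasis}.

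Finally, for the rigidity statement \eqref{eq:newConstantForm}, equality in \eqref{eq:newShortUpperBoundBasis} forces equality in \eqref{eq:newShortUpperBound} for the potential $\pi^*\xi_B$ with $\xi = \pi^*\xi_B$. Theorem \ref{theo:bounds}, specifically identity \eqref{eq:constantForm} in its $\LHB^1(M)$ form, then provides $\omega \in \LHB^1(M)$ with $\lambda_1(A) = \norm{\pi^*\xi_B - \omega}^2$ constant pointwise on $M$. Writing $\omega = \pi^*\omega_B$ with $\omega_B \in \Lfrak^1(B)$ and using again the pointwise norm equality $\norm{\pi^*(\xi_B - \omega_B)}(p) = \normB{\xi_B - \omega_B}(\pi(p))$, the constancy on $M$ together with surjectivity of $\pi$ yields that $\normB{\xi_B - \omega_B}^2$ is constant on $B$ and equal to $\lambda_1(A)$, which is exactly \eqref{eq:newConstantForm}.
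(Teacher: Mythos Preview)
Your proof is correct and follows essentially the same route as the paper: gauge away to $\pi^*\xi_B$, apply the K--contact case of Theorem \ref{theo:bounds}, identify the lattices via $\pi^*$, and use the fibre--integration identity $\int_M f\circ\pi = \lambda_{\Reeb}\int_B f$ to pass the quotient to $B$; the rigidity part is likewise deduced from \eqref{eq:constantForm}. The one substantive difference is in the step you flag as the main obstacle, namely showing that $\omega = \pi^*\omega_B \in \LHB^1(M)$ forces $\omega_B \in \Lfrak^1(B)$. You argue homologically: surjectivity of $\pi_1(M)\to\pi_1(B)$ plus Chow--Rashevskii lets you realise any class in $H_1(B;\mathbb{Z})$ by the projection of a \emph{horizontal} loop in $M$, so the period condition transfers directly. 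The paper instead lifts a given loop $\gamma_B$ in $B$ to a closed (not necessarily horizontal) loop $\gamma$ in $M$ via a fibre--bundle lifting argument, and then uses $\int_\gamma\omega\in 2\pi\mathbb{Z}$; this last inclusion tacitly relies on the same Chow--Rashevskii fact (closedness of $\omega$ reduces periods to horizontal loops), so your version is arguably more explicit while the paper's is shorter.
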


\begin{proof}
    We start by proving the equality in \eqref{eq:newShortUpperBoundBasis}, noticing that the inequality is straightforward consequence of Theorem \ref{theo:bounds}, Lemma \ref{lemma:Gauge}, and Proposition \ref{proposition:Gysin}.

    For this, we start noting that, thanks to Propositions \ref{prop:deRhamDec} and \ref{proposition:Gysin}, 
    given any $\omega \in \LHB^1(M)$ there exists $\omega_B \in \Omega^1(B)$ harmonic so that $\omega = \pi^* \omega_B$. If $\gamma_B : [0,1] \to B$ is any AC and closed curve, it is in particular a fibre map \cite[Theorem 5.2]{Husemoller1966}. Hence, there exists a lifting $\gamma : [0,1] \to M$ of $\gamma_B$ that is itself AC and closed, so that
    $$
    \int_{\gamma_B} \omega_B = \int_{\gamma} \pi^* \omega_B = \int_{\gamma} \omega \in 2 \pi \mathbb{Z} \quad \Longrightarrow \quad \omega_B \in \Lfrak^1(B) .
    $$
    Moreover, since $\pi : (M,\metric) \to (B,\metricB)$ is a Riemannian submersion, from the coarea formula we infer that, for every $f \in C^{\infty}(B,\mathbb{R})$,
    $$
    \int_M f \circ \pi = \lambda_{\Reeb} \int_B f , \quad \textnormal{where} \quad \lambda_{\Reeb} \triangleq \textnormal{``Reeb flow's period''} = \textnormal{constant} ,
    $$
    given that regularity forces $\textnormal{vol}( \pi^{-1}(q) ) = \lambda_{\Reeb}(p) = \textnormal{constant}$, for $q \in B$, $p \in \pi^{-1}(q)$. In turn, for any $\omega = \pi^* \omega_B \in \LHB^1(M)$ with $\omega_B \in \Lfrak^1(B)$, we may compute
    \begin{align*}
        \lambda_{\Reeb} \, \textnormal{dist}(\xi_B , \Lfrak^1(M))^2 \le \lambda_{\Reeb} \, \int_B \normB{\xi_B - \omega_B}^2 = \int_M \norm{\pi^* \xi_B - \omega}^2 ,
    \end{align*}
    which yields $\textnormal{dist}(\pi^* \xi_B , \LHB^1(M))^2 \ge \lambda_{\Reeb} \, \textnormal{dist}(\xi_B , \Lfrak^1(M))^2$ due to the arbitrariness of $\omega \in \LHB^1(M)$. The opposite inequality, thus also the fact that $\textnormal{vol}(M) = \lambda_{\Reeb} \, \textnormal{vol}(B)$, are proved similarly, and the equality in \eqref{eq:newShortUpperBoundBasis} follows.

    Finally, to prove \eqref{eq:newConstantForm}, let equality holds in \eqref{eq:newShortUpperBoundBasis}. From \eqref{eq:constantForm} we infer the existence of $\omega = \pi^* \omega_B \in \LHB^1(M)$ with $\omega_B \in \Lfrak^1(B)$ such that
    $$
    \lambda_1(A) = \norm{\pi^* \xi - \omega}^2(p) = \normB{\xi - \omega_B}^2 \circ \pi(p) , \quad \textnormal{for every} \quad p \in M ,
    $$
    and the conclusion follows.
\end{proof}

\section{The Three--dimensional K--contact Regular Case}
\label{sec:3D}
To further improve our bounds, we focus on three--dimensional, K--contact, and regular manifolds. Recall that a three--dimensional contact manifold is always a CR--manifold. Therefore, three--dimensional, K--contact, and regular manifolds are 
Sasakian.

\subsection{Landau quantization--type phenomenon} \label{sec:Landau}

In this section, we show that regularity in three–dimensional K--contact manifolds yields a \textit{Landau quantization-type phenomenon} for 
magnetic Laplacian, i.e., Theorem \ref{theo:Landau}. 

To best formalize this result, we start by listing some known facts about complex line bundles and magnetic Laplacians on Riemannian manifolds. Let $B$ denote a compact Riemannian surface equipped with a magnetic field $\Omega \in \Omega^2(B)$. Unless $B$ is simply connected, there may not exist a magnetic potential $A \in \Omega^1(B)$ satisfying $\diff A = \Omega$. However, if $\Omega$ is integral, that is if
$$
[ \Omega ]_{\textnormal{hm}} \in H^2_{\textnormal{hm}}(B,\mathbb{Z}) \quad \iff \quad \int_B \Omega \in \mathbb{Z} ,
$$
then there exists a complex line bundle $\mathfrak{p} : L \to B$ and a connection $\nabla$ with curvature $i \, 2 \pi \, \Omega$, i.e., with Chern class $[ \Omega ]_{\textnormal{hm}}$. That is, locally there exists a $1$--form $A \in \Omega^1(B)$ such that $\nabla = \diff + i A$, with $\diff A = i \, 2 \pi \, \Omega$. 
Hence, a magnetic Laplacian $\Delta^B_{\Omega}$ on $B$ can be defined through its action on the space of smooth sections $\Gamma(B,L)$ of $L$ as follows. For any open neighborhood $U \subseteq B$, select a local unitary frame $\sigma_U : U \to L$, so that any 
$s\in \Gamma(B,L)$ can be locally written as $s|_U = f \sigma$, for some $f\in C^\infty(U,\mathbb{C})$. The magnetic Laplacian $\Delta^B_{\Omega}$ may thus be defined by acting as $(\Delta^B_{\Omega} s)|_U = (\Delta_A f) \sigma$, where $\Delta_A$ is the magnetic Laplacian on $U$ associated with the magnetic potential $A|_U$.

At this step, let $M$ be a three--dimensional K--contact regular manifold, and $A \in \Omega^1(M)$ a given magnetic potential. Due to the discussion in Section \ref{subsec:circleBundle}, up to rescaling the contact 1--form by a constant, there exists a compact symplectic surface $(B,\Omega)$, associated with an integral co-cycle $0 \neq [ \Omega ]_{\textnormal{hm}} \in H^2_{\textnormal{hm}}(B,\mathbb{Z})$, such that $M$ is the bundle space of a circle bundle $\pi : M \to B$ satisfying $\diff \alpha = \pi^* \Omega$. In addition, the Reeb $\Reeb$ generates the free $\mathbb{S}^1$--action $\Theta : \mathbb{S}^1 \times M \to M$ on $\pi : M \to B$. By denoting $C^\infty(M,\mathbb{C})_k \triangleq \{ f \in C^\infty(M,\mathbb{C}) : \Reeb f = i k f \}$, we may decompose
\begin{equation}
    \label{eq:decomp-cinfty}
    C^\infty(M,\mathbb{C}) \simeq \bigoplus_{k\in \mathbb{Z}} C^\infty(M,\mathbb{C})_k .
\end{equation}
The condition $[ \Omega ]_{\textnormal{hm}} \in H^2_{\textnormal{hm}}(B,\mathbb{Z})$ ensures that each $C^\infty(M,\mathbb{C})_k$ is isomorphic to the space of smooth sections of the $k$--th tensor power of the complex line bundle $\mathfrak{p} : L \to B$ associated with the circle bundle $\pi : M \to B$.
Such an isomorphism is
\begin{equation} \label{eq:isomorphism}
    f \in C^\infty(M,\mathbb{C})_k \mapsto s_f \in \Gamma(B,L^{\otimes k}) , \quad \textnormal{with} \quad s_f(\pi(p)) \triangleq [p , f(p)]_{L^{\otimes k}} ,
\end{equation}
where $[p,z]_{L^{\otimes k}}$ denotes the equivalence class of $(p,z) \in M \times \mathbb{C}$ in $L^{\otimes k} = (M \times \mathbb{C}) / \sim$, with equivalence relation stemming from $(p,z) \sim (\Theta(\vartheta,p) , e^{-i k \vartheta} z)$, for any $\vartheta \in \mathbb{S}^1$. In particular, we may identify $C^\infty(M,\mathbb{C})_0 \simeq C^\infty(B,\mathbb{C})$.

Recall that $\DeltaRm$ denotes the horizontal Laplacian on $M$ (Section \ref{sec:DefLap}), generated by $\metric$. Since in K--contact settings $\DeltaRm$ and $\Reeb$ commute as differential operators, $\DeltaRm$ preserves the decomposition \eqref{eq:decomp-cinfty}, so that the spectrum of $\DeltaRm$ is the union of the spectra of each restriction $\DeltaRm|_k$ of $\DeltaRm$ on $C^\infty(M,\mathbb{C})_k$, for $k$ running over $\mathbb{Z}$. Note that, by leveraging the isomorphism \eqref{eq:isomorphism}, we may think of each $\DeltaRm|_k$ as an operator acting on $\Gamma(B,L^{\otimes k})$. In fact, each $\DeltaRm|_k$ results in a magnetic Laplacian on $B$, where the magnetic potential is given by a connection with curvature $i \, 2 \pi \, k \, \Omega$. In particular, through the results of Section \ref{subsec:circleBundle} and from the classical diamagnetic inequality, we infer that $\lambda_1(\DeltaRm|_k) \ge \lambda_1(\Delta^B)$, where $\lambda_1(\Delta^B)$ denotes the first non--zero eigenvalue of the classical Laplace-Beltrami operator $\Delta^B$ on $B$, generated by $\metricB$. Since $\DeltaRm|_0$ is isospectral to $\Delta^B$, from 
the inequality $\lambda_1(\DeltaRm|_k) \ge \lambda_1(\Delta^B)$ we conclude that
$$
\lambda_1(\DeltaRm) = \min_{k \in \mathbb{Z}} \; \lambda_1(\DeltaRm|_k) \le \lambda_1(\DeltaRm|_0) = \lambda_1(\Delta^B) \quad \Longrightarrow \quad \lambda_1(\DeltaRm) = \lambda_1(\Delta^B) .
$$

The latter identity does not hold in case of magnetic horizontal Laplacians $\DeltaRmA{A}$ on $M$, with a non--trivial magnetic potential $A\in \Omega^1(M)$, which we assume to be horizontal thanks to Lemma \ref{lemma:Gauge}. To see this, since the decomposition \eqref{eq:decomp-cinfty} still holds true, we denote by $\DeltaRmA{A}|_k$ the restriction of $\DeltaRmA{A}$ to $C^\infty(M,\mathbb{C})_k$. If we assume $A$ to be basic, then $\DeltaRmA{A}$ commutes with $R$, and thus $\DeltaRmA{A}|_k$ is well-defined. By again leveraging the isomorphism \eqref{eq:isomorphism}, we may think of each $\DeltaRmA{A}|_k$ as an operator acting on $\Gamma(B,L^{\otimes k})$. In fact, each $\DeltaRmA{A}|_k$ results in a magnetic Laplacian $\Delta^B_k$ on $B$, where the magnetic potential is given by a connection now with curvature $ i \, 2 \pi \, k \, \Omega + \diff A_B$. Here, $A_B \in \Omega^1(B)$ is the unique $1$--form on $B$ satisfying $A = \pi^* A_B$, which, since $\mathcal{L}_{\Reeb} A = 0$, can be well--defined by $A_B(q) \cdot v_q \triangleq A(p) \cdot \widetilde v_p$, for $q \in B$ and $v_q \in T B$, where $\widetilde v_p$ denotes the unique lift of $v_q$ at $p \in M$, for any $p \in \pi^{-1}(q)$. 
Now, denote by $\Delta^B_{A_B}$ the magnetic Laplacian on $B$ with magnetic potential $A_B$, generated by $\metricB$. Observe that, although it is 
clear that $\DeltaRmA{A}|_0$ is isospectral to $\Delta^B_{A_B}$, it is no more true that $\lambda_1(\DeltaRmA{A}|_k) \geq \lambda_1(\Delta^B_k)$, 
$k \neq 0$.

\begin{remark}
    Note that it is not possible to consider a magnetic field on $B$ (in the sense of complex line bundles as previously) generating a circle bundle $\pi : N \to B$ and an operator $\DeltaRm$ on some manifold $N$ with same spectrum as $\DeltaRmA{A}$ on $M$. This because the magnetic potentials $A_k$ constructed above do not scale correctly with $k$.
\end{remark}

From now on, for any $A \in \OmegaH^1(M)$ basic, let $A_B \in \Omega^1(B)$ denote the unique $1$--form on $B$ satisfying $A = \pi^* A_B$. Moreover, with obvious construction, let $\Delta^B_{A_B}$ denote the magnetic Laplacian on $B$ with magnetic potential $A_B$, generated by $\metricB$, and $\lambda_1(A_B)$ denote its first eigenvalue. To finally prove the aforementioned Landau quantization–type phenomenon, we start by recalling the following result (see, e.g., in \cite[Theorem~4]{montgomeryHearing1995}).

\begin{theorem} \label{theo:lambdaB}
    Let $B$ be an orientable Riemannian surface, and $\Omega \in \Omega^2(B)$ be an integral co-cycle.
    Let $\mathfrak{p} : L \to B$ be a line bundle whose connection has curvature $i \, 2 \pi \, \Omega$, and consider the associated magnetic Laplacian $\Delta^B_{\Omega}$ acting on the space of smooth sections $\Gamma(B,L)$ of $L$. Then, it holds that
    $$
    \lambda_1(\Delta^B_{\Omega}) \ge \mu(B,\Omega),
    \qquad\text{where}\qquad 
    \mu(B,\Omega)\triangleq \inf_{s\in\Gamma(B,L)} \frac{\displaystyle \left|\int_B |s|^2 \Omega\right|}{\|s\|^2_{L^2}}.
    $$
    If, in addition, $\Omega$ is a constant multiple of the Riemannian volume, we have that
    \[
    \lambda_1(\Delta^B_{\Omega}) = \mu(B,\Omega) = \frac{1}{\textnormal{vol}(B)}  \left| \int_B \Omega \right|.
    \]
\end{theorem}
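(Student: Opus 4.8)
The plan is to exploit the complex geometry of the oriented surface $B$. Since $B$ is an oriented Riemannian surface, its metric and orientation determine a complex structure making $B$ a Kähler manifold whose Kähler form is the Riemannian volume form $dV_B$. Relative to this structure the Hermitian connection $\nabla$ on $L$ splits into its $(1,0)$ and $(0,1)$ parts, $\nabla=\partial_A+\bar\partial_A$, and the magnetic Laplacian factors as $\Delta^B_\Omega=\nabla^*\nabla=\partial_A^*\partial_A+\bar\partial_A^*\bar\partial_A$; the cross terms vanish by type, since they map between the orthogonal summands $\Omega^{1,0}(B,L)$ and $\Omega^{0,1}(B,L)$. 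Hence, for every smooth section $s\in\Gamma(B,L)$, one has $(\Delta^B_\Omega s,s)_{L^2}=\|\partial_A s\|_{L^2}^2+\|\bar\partial_A s\|_{L^2}^2$, with both terms nonnegative.

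The technical heart is to establish the Bochner--Kodaira identity on sections. On degree--$0$ forms the commutator $[\partial_A,\bar\partial_A]$ reduces to the contraction of the curvature $\Theta_\nabla$ of $\nabla$ against the Kähler form. Concretely, in a local conformal coordinate $z=x+iy$ and a local unitary frame, writing $s=f\sigma$ and letting $D_z,D_{\bar z}$ denote the covariant derivatives, the conformal factor cancels in the $L^2$ norms (two--dimensional conformal invariance), and $[D_z,D_{\bar z}]$ equals a constant multiple of the scalar curvature of the bundle. After integration by parts, using that the covariant derivatives of a unitary connection are skew--adjoint (so $D_z^*=-D_{\bar z}$), this patches to the global identity
\[
\|\bar\partial_A s\|_{L^2}^2-\|\partial_A s\|_{L^2}^2=\int_B |s|^2\,\Omega ,
\]
with the constant fixed by the normalization of the curvature as in the statement.

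The lower bound then follows immediately: discarding the nonnegative term $\|\partial_A s\|_{L^2}^2$ (respectively $\|\bar\partial_A s\|_{L^2}^2$) in the identity above gives $(\Delta^B_\Omega s,s)_{L^2}\ge \int_B|s|^2\Omega$ and $(\Delta^B_\Omega s,s)_{L^2}\ge -\int_B|s|^2\Omega$, whence $(\Delta^B_\Omega s,s)_{L^2}\ge\big|\int_B|s|^2\Omega\big|$ for every $s$. Dividing by $\|s\|_{L^2}^2$, taking the infimum over $\Gamma(B,L)$, and invoking the variational characterization of $\lambda_1(\Delta^B_\Omega)$ yields $\lambda_1(\Delta^B_\Omega)\ge\mu(B,\Omega)$.

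For the equality case, suppose $\Omega=\kappa\,dV_B$ with $\kappa$ constant. Then $\int_B|s|^2\Omega=\kappa\,\|s\|_{L^2}^2$ for every $s$, so the Rayleigh quotient defining $\mu(B,\Omega)$ is the constant $|\kappa|$, whence $\mu(B,\Omega)=|\kappa|=\tfrac{1}{\textnormal{vol}(B)}\big|\int_B\Omega\big|$. It remains to produce a test section realizing this value. Choosing the sign so that $\kappa\ge 0$, the identity above shows that $(\Delta^B_\Omega s,s)_{L^2}=2\|\partial_A s\|_{L^2}^2+\kappa\|s\|_{L^2}^2$, so the Rayleigh quotient of $\Delta^B_\Omega$ equals $\mu(B,\Omega)$ exactly on $\ker \partial_A$, i.e. on the antiholomorphic sections of $L$, equivalently the holomorphic sections of $\overline{L}$ (the lowest Landau level). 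The remaining point, which I expect to be the main obstacle, is to guarantee that this kernel is nonzero: this is where positivity enters, since $\overline{L}$ has positive degree $\big|\int_B\Omega\big|>0$, and a nonzero holomorphic section exists by Riemann--Roch / Kodaira vanishing. Testing $\lambda_1$ against such a section gives $\lambda_1(\Delta^B_\Omega)\le\mu(B,\Omega)$, which, combined with the lower bound, closes the proof.
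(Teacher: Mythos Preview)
The paper does not prove this theorem at all: it is simply quoted as a known result from \cite[Theorem~4]{montgomeryHearing1995}, so there is no ``paper's proof'' to compare your attempt against. What you have written is, in outline, the standard Bochner--Kodaira argument that underlies that result, and the lower bound part is essentially correct: splitting $\nabla=\partial_A+\bar\partial_A$, using $\Delta^B_\Omega=\partial_A^*\partial_A+\bar\partial_A^*\bar\partial_A$, and exploiting the Weitzenb\"ock identity $\|\bar\partial_A s\|^2-\|\partial_A s\|^2=c\int_B|s|^2\,\Omega$ (with the constant $c$ fixed by the curvature normalisation) is exactly the right mechanism.

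There is, however, a genuine gap in your equality argument. You correctly identify that the bound is saturated precisely on $\ker\partial_A$ (respectively $\ker\bar\partial_A$), and that the issue is to show this kernel is nonzero. But the assertion that ``a nonzero holomorphic section exists by Riemann--Roch / Kodaira vanishing'' because $\bar L$ has positive degree is not justified in general. On a compact Riemann surface of genus $g$, a line bundle of degree $d$ with $0<d<g$ need not admit any nonzero holomorphic section (a generic degree--$1$ bundle on a genus--$2$ curve has $h^0=0$); Riemann--Roch only gives $h^0\ge d-g+1$, and Kodaira vanishing only kicks in for $d>2g-2$. So your argument, as written, is complete only under the extra hypothesis $\left|\int_B\Omega\right|\ge g$. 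This covers the sphere and the torus---which is all the paper actually needs, since in the applications $B=\mathbb{T}^2_k$---but does not establish the equality clause for arbitrary genus as the theorem is stated.
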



We may now state our Landau quantization-type phenomenon as follows.

\begin{theorem} \label{theo:Landau}
    Let $A \in \OmegaH^1(M)$ be basic and $A_B \in \Omega^1(B)$ denote the unique $1$--form on $B$ satisfying $A = \pi^* A_B$. 
    Then, we have that
    \[
     \lambda_1(A_B) \ge  \lambda_1(A)
     \ge
     \min\left\{ \lambda_1(A_B) , \mu(B,\Omega)\right\} .
    \]
    Moreover, it holds that
    \[
        \lambda_1(A) = \lambda_1(A_B) 
        \qquad \text{if} \qquad
        \lambda_1(A_B) \le \mu(B,\Omega).
    \]
    Finally, if $\Omega$ is a constant multiple of the Riemannian volume, we have
    $$
    \lambda_1(A_B) \ge \lambda_1(A) = 
    \min\left\{ \lambda_1(A_B) , \ \frac{\displaystyle 1}{\displaystyle \textnormal{vol}(B)} \left|\displaystyle \int_B \Omega \right|\right\} .
    $$
\end{theorem}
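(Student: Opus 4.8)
The plan is to diagonalize $\DeltaRmA{A}$ along the isotypic decomposition \eqref{eq:decomp-cinfty} and reduce all three claims to the family of magnetic Laplacians $\Delta^B_k$ on $B$ governed by Theorem \ref{theo:lambdaB}. Since $A$ is basic, $\DeltaRmA{A}$ commutes with $\Reeb$ and therefore preserves the splitting \eqref{eq:decomp-cinfty}, whence
$$
\lambda_1(A) = \min_{k \in \mathbb{Z}} \lambda_1(\DeltaRmA{A}|_k) .
$$
Through the isomorphism \eqref{eq:isomorphism}, each $\DeltaRmA{A}|_k$ is unitarily equivalent to the magnetic Laplacian $\Delta^B_k$ on $\Gamma(B,L^{\otimes k})$ whose connection has curvature $i(2\pi k \Omega + \diff A_B)$. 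The decisive base case is $k = 0$: the bundle is then trivial, the curvature reduces to $i \diff A_B$, and $\DeltaRmA{A}|_0$ is precisely the magnetic Laplacian $\Delta^B_{A_B}$ on $B$, so that $\lambda_1(\DeltaRmA{A}|_0) = \lambda_1(A_B)$.

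The left inequality $\lambda_1(A_B) \ge \lambda_1(A)$ in every displayed chain is then immediate, since restricting the Rayleigh quotient to the single sector $C^\infty(M,\mathbb{C})_0$ gives $\lambda_1(A) = \min_{k} \lambda_1(\DeltaRmA{A}|_k) \le \lambda_1(\DeltaRmA{A}|_0) = \lambda_1(A_B)$. For the lower bound I would apply Theorem \ref{theo:lambdaB} in each nonzero sector, obtaining
$$
\lambda_1(\Delta^B_k) \ge \mu\Big( B , k\Omega + \frac{1}{2\pi}\diff A_B \Big) , \quad k \neq 0 ,
$$
and then show that the right-hand side stays above $\mu(B,\Omega)$ for all $k \neq 0$. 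Together with the base case this yields $\lambda_1(A) \ge \min\{\lambda_1(A_B), \mu(B,\Omega)\}$, the right inequality. The second assertion is then purely formal: if $\lambda_1(A_B) \le \mu(B,\Omega)$, the two bounds pinch $\lambda_1(A)$ between $\lambda_1(A_B)$ and $\min\{\lambda_1(A_B),\mu(B,\Omega)\} = \lambda_1(A_B)$, forcing $\lambda_1(A) = \lambda_1(A_B)$.

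For the final statement I would promote the inequalities to an identity via the sharp case of Theorem \ref{theo:lambdaB}. When $\Omega$ is a constant multiple of the Riemannian volume one has $\mu(B,\Omega) = \textnormal{vol}(B)^{-1}|\int_B \Omega|$; moreover, the effective field of each nonzero sector differs from the volume--proportional form $2\pi k \Omega$ only by the exact contribution $\diff A_B$, so that Theorem \ref{theo:lambdaB} applies in its equality form and computes $\lambda_1(\Delta^B_k)$ exactly. The nonzero sectors are then minimized at $k = \pm 1$, and substituting the resulting exact values into $\lambda_1(A) = \min_k \lambda_1(\Delta^B_k)$ collapses the two--sided estimate into $\lambda_1(A) = \min\{\lambda_1(A_B), \textnormal{vol}(B)^{-1}|\int_B\Omega|\}$.

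The main obstacle is the uniform lower bound $\lambda_1(\Delta^B_k) \ge \mu(B,\Omega)$ for $k \neq 0$. The Montgomery--type quantity $\mu(B, k\Omega + \frac{1}{2\pi}\diff A_B)$ couples the genuinely $k$--linear term $k\int_B |s|^2 \Omega$ to the potential contribution $\frac{1}{2\pi}\int_B |s|^2 \diff A_B$, and the subtlety is to verify that the integer weight $|k| \ge 1$, together with the positivity of the symplectic density $\Omega$, dominates the $\diff A_B$ term so that the effective field never changes the relevant sign and the bound never drops below $\mu(B,\Omega)$. This is also precisely the step that must be reexamined in the constant--multiple regime to secure the exact sector eigenvalues, and thus where essentially all the analytic content of the theorem is concentrated.
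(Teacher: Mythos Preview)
Your overall plan matches the paper's exactly: split $\DeltaRmA{A}$ along \eqref{eq:decomp-cinfty}, identify the $k=0$ piece with $\Delta^B_{A_B}$, bound the $k\neq 0$ pieces via Theorem~\ref{theo:lambdaB}, and read off all three assertions from $\lambda_1(A)=\min_k\lambda_1(\DeltaRmA{A}|_k)$. The upper bound $\lambda_1(A)\le\lambda_1(A_B)$ and the pinching argument for the second claim are exactly as you describe.

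The divergence is at your ``main obstacle'', and there the paper's proof is much shorter than the route you sketch. The paper does \emph{not} pass through $\mu\bigl(B,\,k\Omega+\tfrac{1}{2\pi}\diff A_B\bigr)$ at all: it observes that $\diff A_B$ is exact, so that the sector--$k$ curvature has cohomology class $[k\Omega]_{\textnormal{hm}}$, and invokes Theorem~\ref{theo:lambdaB} directly with $k\Omega$ to obtain, for every $k\neq 0$,
\[
\lambda_1(\DeltaRmA{A}|_k)\;\ge\;\mu(B,k\Omega)\;=\;|k|\,\mu(B,\Omega)\;\ge\;\mu(B,\Omega).
\]
For the constant--multiple case the same observation gives, at $k=\pm 1$, the equality $\lambda_1(\DeltaRmA{A}|_1)=\mu(B,\Omega)=\textnormal{vol}(B)^{-1}\bigl|\int_B\Omega\bigr|$ via the sharp part of Theorem~\ref{theo:lambdaB}, and the minimum over $k$ collapses immediately.

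Your proposed alternative --- bounding $\mu\bigl(B,\,k\Omega+\tfrac{1}{2\pi}\diff A_B\bigr)$ from below by $\mu(B,\Omega)$ through a ``$|k|\ge 1$ dominates $\diff A_B$'' argument --- would not close as stated: there is no smallness hypothesis on $A_B$, so at $|k|=1$ the form $k\Omega+\tfrac{1}{2\pi}\diff A_B$ can change sign on a set of large measure and drive that $\mu$ to zero. The step you are missing is that the exact piece $\diff A_B$ is discarded \emph{before} applying the Montgomery bound, not controlled after.
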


\begin{proof}
    From the previous discussion we infer that $\lambda_1(\DeltaRmA{A}|_k) = \lambda_1(\Delta^B_{i k \Omega + \diff A_B})$. Since $[ i k \Omega + \diff A_B ]_{\textnormal{hm}} = [ i k \Omega ]_{\textnormal{hm}}$, Theorem \ref{theo:lambdaB} yields that
    $$
    \lambda_1(\DeltaRmA{A}|_k) = \lambda_1(\Delta^B_{i k \Omega + \diff A_B}) = 
    \mu(B,k\Omega) = |k|\mu(B,\Omega)\ge \mu(B,\Omega),
    \quad \forall k \in \mathbb{Z}\setminus\{0\} .
    $$
    We conclude from the fact that $\DeltaRmA{A}|_0$ is isospectral to $\Delta^B_{A_B}$.
    To complete the proof, it suffices to observe that in the case where $\Omega$ is a constant multiple of the Riemannian volume, we have $\lambda_1(\DeltaRmA{A}|_1)=\mu(B,\Omega)=(\operatorname{vol}(B))^{-1}|\int_B\Omega|$.
\end{proof}

\subsection{The case of Heisenberg nilmanifolds} \label{sec:Heisenberg}

Through the use of Theorem \ref{theo:Landau}, in this section we show how to explicitly compute the first eigenvalue in case of Heisenberg nilmanifolds.

Let $\mathbb{H}^1$ denote the Heisenberg group, i.e., the Lie group of upper triangular $3\times 3$ matrices with $1$s on their diagonal. In particular, we may identify $\mathbb{H}^1\simeq \mathbb{R}^3$ through the group law 
$(x,y,z) \cdot (x',y',z') = (x+x',y+y',z+z'+x'y)$, for $x , x' , y , y' , z , z' \in \mathbb{R}^3$. The 1--form 
$\alpha \triangleq \diff z - y \, \diff x$ endows $\mathbb{H}^1$ with a contact structure. The vector fields 
$X \triangleq \partial_x + y \partial_z$, 
$Y \triangleq \partial_y$, and $Z \triangleq \partial_z$ generate the algebra of left--invariant vector fields over $\mathbb{H}^1$. In particular, $\Dist = \operatorname{span}\{X,Y\}$, $Z$ is the Reeb, and the dual basis of $\{ X , Y , Z \}$ is $\{ \diff x , \diff y , \alpha \}$. The following result is well-known \cite[Theorem~5.4]{bockLowdimensional2016}.

\begin{theorem} \label{theo:LatticeH1}
    Every lattice (i.e., a discrete co-compact subgroup) of $\mathbb{H}^1$ is isomorphic to $\Gamma_k \triangleq \left\{ (k n_1 , n_2 , n_3) : \ n_1 , n_2 , n_3 \in \mathbb{Z} \right\}$, for some $k \in \mathbb{N}$, $k\ge 1$.
\end{theorem}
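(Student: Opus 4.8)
The plan is to exploit the two--step nilpotent structure of $\mathbb{H}^1$, reducing an arbitrary lattice $\Gamma$ to a central extension of $\mathbb{Z}^2$ by $\mathbb{Z}$ classified by a single positive integer, and then to match that integer with $\Gamma_k$. First I would record the algebraic backbone: the center $Z \triangleq Z(\mathbb{H}^1) = \{ (0,0,z) : z \in \mathbb{R} \}$ coincides with the commutator subgroup $[\mathbb{H}^1,\mathbb{H}^1]$. Indeed, a direct computation with the group law gives $[(a,b,\ast),(x,y,\ast)] = (0,0,\, x b - a y)$, so all commutators are central and together they sweep out $Z$. The abelianization is then the projection $p : \mathbb{H}^1 \to \mathbb{H}^1/Z \simeq \mathbb{R}^2$, $(x,y,z) \mapsto (x,y)$.

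Next, for a given lattice $\Gamma$, I would establish the rank conditions: $\Gamma \cap Z$ is a rank--$1$ lattice in $Z \simeq \mathbb{R}$, and $\bar\Gamma \triangleq p(\Gamma)$ is a rank--$2$ lattice in $\mathbb{R}^2$. Co--compactness of $\Gamma$ forces $\bar\Gamma$ to be co--compact in $\mathbb{R}^2$; to see that $\bar\Gamma$ is moreover discrete (hence a genuine rank--$2$ lattice) one uses that $\Gamma \cap Z$ is itself discrete and nontrivial, so that the ``fibre'' direction of $p$ is accounted for. The nontriviality of $\Gamma \cap Z$ is the crucial point: choosing $g_1, g_2 \in \Gamma$ whose images under $p$ are linearly independent over $\mathbb{R}$ --- possible precisely because $\Gamma$ is co--compact --- their commutator $[g_1,g_2] = (0,0,\Delta)$ with $\Delta \neq 0$ is a nonzero central element of $\Gamma$. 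This yields a central short exact sequence $1 \to \mathbb{Z} \to \Gamma \to \mathbb{Z}^2 \to 1$, and in particular shows $\Gamma$ is non--abelian.

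With this in hand I would extract a presentation. Let $c$ generate $\Gamma \cap Z \simeq \mathbb{Z}$ and let $a, b \in \Gamma$ lift a basis of $\bar\Gamma \simeq \mathbb{Z}^2$. Since $[a,b] \in [\mathbb{H}^1,\mathbb{H}^1] \cap \Gamma = \Gamma \cap Z = \langle c \rangle$, we obtain $[a,b] = c^k$ for some $k \in \mathbb{Z}$, with $k \neq 0$ because the $p$--images of $a$ and $b$ are linearly independent; after swapping $a$ and $b$ if necessary, $k \geq 1$. Using the normal form $a^m b^n c^\ell$ available in a central extension of $\mathbb{Z}^2$ by $\mathbb{Z}$, these relations present $\Gamma$ as the discrete Heisenberg group $\langle a,b,c : [a,c]=[b,c]=1,\ [a,b]=c^k \rangle$. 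Finally I would write down the explicit isomorphism onto $\Gamma_k$ sending $a \mapsto (k,0,0)$, $b \mapsto (0,1,0)$, and $c \mapsto (0,0,1)$ (up to replacing $c$ by $c^{-1}$), verifying the single defining relation via $[(k,0,0),(0,1,0)] = (0,0,-k)$ and checking that these three elements generate $\Gamma_k$.

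The hard part is the middle step: deducing the rank conditions, and in particular the discreteness of $\bar\Gamma$, from co--compactness alone. The conceptually cleanest route is to invoke Malcev's theory of lattices in simply connected nilpotent Lie groups, which furnishes both the rank statements and the rigidity fact that $\Gamma$ recovers $\mathbb{H}^1$ as its Malcev completion (ruling out an abelian $\Gamma$ a priori). A more hands--on argument first secures the nontriviality of $\Gamma \cap Z$ via commutators as above, deduces it is a rank--$1$ lattice, and then transports discreteness and co--compactness to the abelian quotient $\mathbb{R}^2$ through the exact sequence.
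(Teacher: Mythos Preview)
The paper does not prove this theorem at all; it simply records it as well known and cites \cite[Theorem~5.4]{bockLowdimensional2016}. Your proposal therefore goes well beyond what the paper supplies, and the outline you give is the standard, correct one: reduce $\Gamma$ to a central extension $1\to\mathbb{Z}\to\Gamma\to\mathbb{Z}^2\to 1$ via the abelianization $p:\mathbb{H}^1\to\mathbb{R}^2$, extract the integer $k$ from $[a,b]=c^{\pm k}$, and match generators with those of $\Gamma_k$. The commutator computation and the explicit isomorphism are fine (the sign issue you flag with ``up to replacing $c$ by $c^{-1}$'' is exactly right, since $[(k,0,0),(0,1,0)]=(0,0,-k)$).

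The only genuinely delicate step, which you correctly identify, is establishing that $\Gamma\cap Z$ is a rank--$1$ lattice in $Z$ and that $\bar\Gamma=p(\Gamma)$ is a rank--$2$ lattice in $\mathbb{R}^2$. Your commutator argument gives nontriviality of $\Gamma\cap Z$; co--compactness of $\Gamma\cap Z$ in $Z$ and discreteness of $\bar\Gamma$ then follow either from Malcev's theory (as you note) or from the elementary observation that a discrete subgroup of a nilpotent Lie group intersects the center in a lattice of the center whenever the quotient is compact. Either route completes the argument.
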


Due to Theorem \ref{theo:LatticeH1}, throughout this section we fix any integer $k \ge 1$ and consider the homogeneous space $M \triangleq \Gamma_k \setminus \mathbb{H}^1$. Since $\mathbb{H}^1$ acts freely and properly from the left over $\mathbb{H}^1$, $M$ is a compact three--dimensional manifold. Moreover, since $\alpha$ is left--invariant over $\mathbb{H}^1$, $M$ is naturally equipped with a (quotient) contact structure, whose corresponding contact 1--form will still be denoted by $\alpha$ (no confusion shall arise). With such contact structure, $M$ is called \textit{Heisenberg nilmanifold}. Since $X$, $Y$, and $Z$ are left-invariant, they naturally extend to vector fields over $M$, still denoted $X$, $Y$, and $Z$ through a slight abuse of notation. In particular, $\{ X , Y , Z \}$ results in a global basis for $T M$, with dual basis $\{ \diff x , \diff y , \alpha \}$ (no confusion shall arise with this latter notation). Endowed with the (1,1)--tensor $\Jop : TM \to TM$ satisfying $\Jop X = Y$, $\Jop Y = -X$, and $J Z = 0$, for which $\{ X , Y , Z \}$ is $\metric$--orthonormal, the contact manifold $M$ is K--contact. We refer to this latter (1,1)--tensor $\Jop$ as the \textit{Heisenberg almost--complex structure}.

Heisenberg nilmanifolds are regular. The circle bundle $\pi$ defined in Section \ref{subsec:circleBundle} can be computed explicitly. More specifically, denote the $k$--scaled two--dimensional torus by $\mathbb{T}^2_k \triangleq \mathbb{R} / k \mathbb{Z} \times \mathbb{R} / \mathbb{Z}$. With obvious notation, the surjective submersion $\pi : M \to \mathbb{T}^2_k$ defined by $\pi( [x , y , z]_{\Gamma_k} ) \triangleq ( [x]_{k \mathbb{Z}} , [y]_{\mathbb{Z}} )$ is a circle bundle, equipped with the free right $\mathbb{S}^1$--action $\Theta : \mathbb{S}^1 \times M \to M$ defined by $\Theta( [r]_{\mathbb{Z}} , [x , y , z]_{\Gamma_k} ) \triangleq [ (x,y,z) \cdot (0,0,r) ]_{\Gamma_k}$\footnote{From now on, we identify $\mathbb{S}^1 \simeq \mathbb{R} / \mathbb{Z}$.}. We may thus select $\Omega \triangleq \diff v \wedge \diff w$ as symplectic form on $\mathbb{T}^2_k$ such that $\diff \alpha = \pi^* \Omega$. We denote $\metricP \triangleq \metricB$ and note that, by definition of $\metric$, the metric $\metricP$ is actually flat. The following stems from applying Proposition \ref{proposition:Gysin} to Heisenberg nilmanifolds.

\begin{proposition} \label{prop:nilmanifold}
    It holds that $H^1_{\textnormal{dR}}(M) \simeq \mathbb{R}^2$. Therefore, the space of Rumin--harmonic horizontal 1--forms on $M$, which coincides with the space of harmonic (in the classical sense) horizontal 1--forms on $M$, is generated by $\{ \diff x , \diff y \}$.
\end{proposition}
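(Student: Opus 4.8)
The plan is to transfer everything to the base torus by means of Proposition~\ref{proposition:Gysin} and then pull back. Since $M = \Gamma_k\setminus\mathbb{H}^1$ is K--contact and regular with base $B = \mathbb{T}^2_k$, the first part of Proposition~\ref{proposition:Gysin} gives an isomorphism $H^1_{\textnormal{dR}}(M,\mathbb{R}) \overset{\pi^*}{\simeq} H^1_{\textnormal{dR}}(\mathbb{T}^2_k,\mathbb{R})$. As $\mathbb{T}^2_k$ is a two--torus, it has first Betti number $2$, so $H^1_{\textnormal{dR}}(M) \simeq \mathbb{R}^2$ at once.

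Next I would exhibit the explicit generators. Because $\metricP$ is flat, the classical harmonic $1$--forms on $(\mathbb{T}^2_k,\metricP)$ are exactly the constant--coefficient forms, i.e.\ the span of $\diff v$ and $\diff w$. Applying the ``moreover'' part of Proposition~\ref{proposition:Gysin}, each pullback $\pi^*\diff v$ and $\pi^*\diff w$ is horizontal, basic, and itself harmonic on $M$; by the explicit form of $\pi$ these pullbacks are precisely $\diff x$ and $\diff y$. Since $\diff x,\diff y$ are basic and horizontal, Lemma~\ref{lemma:reductionDiffRm} tells us that they are Rumin--harmonic if and only if they are classically harmonic, which they are. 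This places the linearly independent pair $\diff x,\diff y$ simultaneously inside the space of classically harmonic horizontal $1$--forms and inside the space of Rumin--harmonic horizontal $1$--forms.

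It then remains to check that these two--dimensional spans exhaust both spaces, which I would do by a dimension count. On the one hand, classical Hodge theory on the compact manifold $(M,\metric)$ shows that the space of harmonic $1$--forms has dimension $b_1(M)=2$; as $\diff x,\diff y$ are harmonic and independent they span it, and since their span is horizontal this space coincides with the harmonic horizontal $1$--forms. On the other hand, the Hodge--type decomposition of Proposition~\ref{prop:deRhamDec} puts the Rumin--harmonic $1$--forms in bijection with the first Rumin cohomology, which is isomorphic to $H^1_{\textnormal{dR}}(M)\simeq\mathbb{R}^2$; hence this space is also $2$--dimensional and equals the span of $\diff x,\diff y$. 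Both descriptions therefore coincide with $\textnormal{span}\{\diff x,\diff y\}$, as claimed.

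The hard part is precisely this last coincidence, since a priori there could exist Rumin--harmonic horizontal forms that are not classically harmonic, and the dimension count is what rules them out. If one prefers an argument not relying on identifying the harmonic spaces with cohomology, the key step is to run Proposition~\ref{proposition:Gysin} directly on an arbitrary Rumin--harmonic $\omega\in\OmegaH^1(M)$: horizontality of $\omega$ forces the correction term $\diff f + h\alpha$ to reduce to $\diffH f$, so $\omega=\pi^*\xi_B-\diffH f$; then co--closedness gives, via Lemma~\ref{lemma:reductionDiffRm}, that $\delta\omega=0$, and combining this with \eqref{eq:equivLaplacian} yields $\DeltaRm f=0$ and hence $\diffH f=0$. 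Lemma~\ref{lemma:zeroHorizDiff} upgrades $\diffH f=0$ to $\diff f=0$, so $f$ is constant by connectedness of $M$ and $\omega=\pi^*\xi_B\in\textnormal{span}\{\diff x,\diff y\}$.
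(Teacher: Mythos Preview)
Your proposal is correct and follows essentially the same approach as the paper: Proposition~\ref{proposition:Gysin} for $H^1_{\textnormal{dR}}(M)\simeq\mathbb{R}^2$, then a dimension count via the isomorphism of Rumin and de~Rham cohomology together with the observation that $\diff x,\diff y$ are Rumin--harmonic. Your write-up is in fact more complete than the paper's terse two-sentence proof, since you explicitly verify the coincidence of the Rumin--harmonic and classically harmonic horizontal spaces (which the paper states but does not argue), and your closing alternative argument via Proposition~\ref{proposition:Gysin} applied directly to a Rumin--harmonic $\omega$ is a nice self-contained variant.
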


\begin{proof}
    For the first claim, 
    use Proposition \ref{proposition:Gysin} and Poincar\'e's duality for homology.

    For the second claim, note that $H^1_{\textnormal{dR}}(M) \simeq \mathbb{R}^2$ and the fact that the Rumin cohomology is isomorphic to the de Rham cohomology imply that the space of Rumin--harmonic horizontal 1--forms on $M$ is at most of dimension two.  But the horizontal 1--forms $\diff x$ and $\diff y$ are linearly independent and Rumin--harmonic.
\end{proof}

At this step, to explicitly compute the first eigenvalue, let $A \in \Omega^1(M)$ be such that $\diffRm \horiz{A} = 0$. Thanks to Propositions \ref{prop:deRhamDec} and \ref{prop:nilmanifold} there exist $a , b \in \mathbb{R}$ and $f \in C^{\infty}(M,\mathbb{R})$ such that $\horiz{A} = ( a \; \diff x + b \; \diff y ) \oplus \diffH f$. Through the notation of Theorem \ref{theo:Landau} we may write $( a \; \diff x + b \; \diff y )_B = ( a \; \diff v + b \; \diff w )$, thus having the following.

\begin{corollary} \label{corol:equalityHeisenberg}
    Let $A \in \Omega^1(M)$ be such that $\diffRm \horiz{A} = 0$. Given $a , b \in \mathbb{R}$ and $f \in C^{\infty}(M,\mathbb{R})$ such that $\horiz{A} = ( a \; \diff x + b \; \diff y ) \oplus \diffH f$, it holds that
    \begin{equation}
        \lambda_1(A) = \min \Big\{ \textnormal{dist}(a , 2 \pi k^{-1} \mathbb{Z})^2 + \textnormal{dist}(b , 2 \pi \mathbb{Z})^2 , \ 1 \Big\} .
    \end{equation}
\end{corollary}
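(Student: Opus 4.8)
The plan is to reduce $\lambda_1(A)$ to the first eigenvalue of a \emph{constant} magnetic Laplacian on the flat torus $\mathbb{T}^2_k$ and then to evaluate the latter by an elementary Fourier computation. First I would strip $A$ of everything that is gauge--irrelevant. Since a $1$--form decomposes as $A = \horiz{A} + (\iota_{\Reeb}A)\alpha$, and since $\diffH f = \diff f - (\Reeb f)\alpha$ (from the identity $\horiz{\omega} = \omega - \alpha\wedge\iota_{\Reeb}\omega$), the hypothesis $\horiz{A} = (a\,\diff x + b\,\diff y)\oplus\diffH f$ gives $A = (a\,\diff x + b\,\diff y) + \diff f + \big((\iota_\Reeb A) - \Reeb f\big)\alpha$. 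Two applications of the gauge invariance of Lemma \ref{lemma:Gauge} then yield $\lambda_1(A) = \lambda_1(a\,\diff x + b\,\diff y)$. The form $a\,\diff x + b\,\diff y$ is horizontal and basic, being a constant--coefficient combination of the basic harmonic forms $\diff x$ and $\diff y$ identified in Proposition \ref{prop:nilmanifold}, so it falls within the scope of Theorem \ref{theo:Landau}.

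Next I would invoke Theorem \ref{theo:Landau}. Under the identification fixed just before the statement, the base form is $A_B = a\,\diff v + b\,\diff w$ on $\mathbb{T}^2_k$. The decisive point is that $\metricP$ is flat with $\{\diff v,\diff w\}$ orthonormal, so the symplectic form $\Omega = \diff v \wedge \diff w$ is exactly the Riemannian volume form of $\mathbb{T}^2_k$; in particular it is a constant multiple of the volume and the sharp, final assertion of Theorem \ref{theo:Landau} is available. Evaluating the quantization term gives $\textnormal{vol}(\mathbb{T}^2_k)^{-1}\big|\int_{\mathbb{T}^2_k}\Omega\big| = k/k = 1$, whence
$$
\lambda_1(A) = \min\big\{ \lambda_1(a\,\diff v + b\,\diff w) , \ 1 \big\}.
$$

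It then remains to compute $\lambda_1(a\,\diff v + b\,\diff w)$, the first eigenvalue of the constant magnetic Laplacian $-(\partial_v + ia)^2 - (\partial_w + ib)^2$ on $\mathbb{R}/k\mathbb{Z}\times\mathbb{R}/\mathbb{Z}$. The functions $e^{2\pi i(m v/k + n w)}$, for $m,n\in\mathbb{Z}$, form a complete orthogonal system of eigenfunctions, with eigenvalues $(a + 2\pi m/k)^2 + (b + 2\pi n)^2$, so the spectrum is exactly this countable set. Minimizing over $m$ and $n$ separately, and using that the lattices $2\pi k^{-1}\mathbb{Z}$ and $2\pi\mathbb{Z}$ are symmetric, identifies the first eigenvalue as $\textnormal{dist}(a, 2\pi k^{-1}\mathbb{Z})^2 + \textnormal{dist}(b, 2\pi\mathbb{Z})^2$. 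Substituting this into the displayed minimum produces the claimed formula.

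I expect the only genuine computation to be the Fourier argument of the last paragraph, which is routine but demands care with the normalization $2\pi k^{-1}\mathbb{Z}$ arising from the period $k$ in the $v$--direction; everything else is bookkeeping with Lemma \ref{lemma:Gauge} and the already--established Theorem \ref{theo:Landau}. The main conceptual ingredient — and the reason the equality (rather than merely the inequality) in Theorem \ref{theo:Landau} applies — is precisely that the base torus is flat, so that $\Omega$ is proportional to its volume form.
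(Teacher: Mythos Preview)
Your proposal is correct and follows essentially the same route as the paper: gauge away $\diff f$ and the $\alpha$-component via Lemma~\ref{lemma:Gauge}, apply the sharp case of Theorem~\ref{theo:Landau} (since $\Omega=\diff v\wedge\diff w$ is the flat volume form, giving the quantization level $k/k=1$), and compute $\lambda_1(a\,\diff v+b\,\diff w)$ on the flat torus by Fourier. The only cosmetic difference is that the paper phrases the last step through the lattice $\mathfrak{L}^1(\mathbb{T}^2_k)$ and a citation to \cite[Theorem~4]{Colbois2017}, whereas you diagonalize directly; the content is identical.
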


\begin{proof}
    Since $\displaystyle \textnormal{vol}(\mathbb{T}^2_k) = \hspace{-1ex} \int_{\mathbb{T}^2_k} \hspace{-1ex} \Omega = k$, and standard Fourier analysis readily yields that
    $$
    \lambda_1(a \; \diff v + b \; \diff w) = \frac{\textnormal{dist}(a \; \diff v + b \; \diff w , \mathfrak{L}^1(\mathbb{T}^2_k))^2}{\textnormal{vol}(\mathbb{T}^2_k)} ,
    $$
    see \cite[Theorem 4]{Colbois2017}, due to Lemma \ref{lemma:Gauge} and Theorem \ref{theo:Landau} we 
    need to prove that
    $$
    \mathfrak{L}^1(\mathbb{T}^2_k) = \Big\{ 2 \pi k^{-1} n_1 \; \diff v + 2 \pi n_2 \; \diff w : \ n_1 , n_2 \in \mathbb{Z} \Big\} .
    $$
    For this, due to Proposition \ref{prop:nilmanifold}, we just need to find admissible values for $c , d \in \mathbb{R}$ such that $\int_{\gamma} ( c \, \diff v + d \, \diff w ) \in 2 \pi \mathbb{Z}$ for every AC and closed $\gamma : [0,1] \to \mathbb{T}^2_k$. By selecting, for any $n_1 , n_2 \in \mathbb{Z}$, $\gamma_1(t) \triangleq ( [k n_1 t]_{k \mathbb{Z}} , [ 0 ]_{\mathbb{Z}} )$, for which $\dot{\gamma}_1 = k n_1 \partial_v$, and $\gamma_2(t) \triangleq ( [ 0 ]_{k \mathbb{Z}} , [ n_2 t ]_{\mathbb{Z}} )$, for which $\dot{\gamma}_2 = n_2 \partial_w$, respectively yields that
    $$
    \int_{\gamma_1} ( c \, \diff v + d \, \diff w ) = c k n_1 \quad \textnormal{and} \quad \int_{\gamma_2} ( c \, \diff v + d \, \diff w ) = d n_2 ,
    $$
    and the desired claim follows, given that $\gamma_1$ and $\gamma_2$ are generators for $\pi_1(\mathbb{T}^2_k)$.
\end{proof}

Interestingly, 
Corollary \ref{corol:equalityHeisenberg} enables explicitly computing the Chern class $k$, therefore recovering full information on the topology of $M$.

\begin{corollary} \label{corol:FindKHeisenbergNew}
    For any $\ell \in \mathbb{N}^*$, we have
    \begin{equation}
        \lambda_1(2 \pi \ell^{-1} \, \diff x) = 0 \quad \iff \quad \ell \text{ divides } k .
    \end{equation}
    In addition, the sequence $\{\lambda_1(2 \pi \ell^{-1} \, \diff x) : \ell\in \mathbb{N}^*\}$ completely determines $k$. 
\end{corollary}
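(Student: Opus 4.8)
The plan is to reduce the entire statement to Corollary \ref{corol:equalityHeisenberg}, which already computes the first eigenvalue for every magnetic potential whose horizontal part is Rumin--closed. First I would check that $A = 2\pi\ell^{-1}\,\diff x$ fits its hypotheses: this form is already horizontal, and by Proposition \ref{prop:nilmanifold} the form $\diff x$ is Rumin--harmonic, so $\diffRm\horiz{A} = 0$. Consequently its decomposition in the sense of Proposition \ref{prop:deRhamDec} is the trivial one $\horiz{A} = (a\,\diff x + b\,\diff y)\oplus\diffH f$ with $a = 2\pi\ell^{-1}$, $b = 0$, and $f = 0$. Corollary \ref{corol:equalityHeisenberg} then applies verbatim and yields
$$
\lambda_1(2\pi\ell^{-1}\,\diff x) = \min\Big\{ \textnormal{dist}(2\pi\ell^{-1}, 2\pi k^{-1}\mathbb{Z})^2 , 1 \Big\} ,
$$
since the $b$--contribution $\textnormal{dist}(0, 2\pi\mathbb{Z})$ vanishes.

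Next I would extract the arithmetic content of the surviving distance via the elementary scaling identity $\textnormal{dist}(2\pi\ell^{-1}, 2\pi k^{-1}\mathbb{Z}) = 2\pi k^{-1}\,\textnormal{dist}(k/\ell, \mathbb{Z})$. The eigenvalue therefore vanishes if and only if $k/\ell \in \mathbb{Z}$, i.e.\ if and only if $\ell$ divides $k$, which is precisely the claimed equivalence. The one point that deserves care is that the distance is measured against the full lattice $2\pi k^{-1}\mathbb{Z}$ rather than against a single generator; the scaling identity above makes this transparent and rules out spurious near-coincidences.

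Finally, to recover $k$ from the whole sequence, I would use the basic fact that a positive integer $\ell$ divides $k$ only when $\ell \le k$, while $\ell = k$ always divides $k$. Hence the set $\{\ell\in\mathbb{N}^* : \lambda_1(2\pi\ell^{-1}\,\diff x) = 0\}$ is exactly the finite set of divisors of $k$, whose maximum equals $k$. Reading this maximum off the sequence gives $k = \max\{\ell\in\mathbb{N}^* : \lambda_1(2\pi\ell^{-1}\,\diff x) = 0\}$, completing the proof. I do not expect a genuine obstacle here: the whole argument consists of verifying that $2\pi\ell^{-1}\,\diff x$ satisfies the hypotheses of Corollary \ref{corol:equalityHeisenberg} and of the elementary number-theoretic reading of the resulting distance condition.
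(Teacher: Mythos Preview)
Your proof is correct and follows essentially the same route as the paper: both apply Corollary~\ref{corol:equalityHeisenberg} with $a=2\pi\ell^{-1}$, $b=0$, rescale the resulting distance by $2\pi/k$, and read off the equivalence from $k/\ell\in\mathbb{Z}$. Your treatment of the second claim (recovering $k$ as the largest $\ell$ with vanishing eigenvalue) is more explicit than the paper's, which simply notes that the equivalence already determines $k$.
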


\begin{proof}
    By Corollary~\ref{corol:equalityHeisenberg} we have 
    \begin{equation}
        \lambda_1\left( \frac{2 \pi}{\ell} \, \diff x \right) = \min\left\{ \operatorname{dist}\left( \frac{2 \pi}{\ell} , \frac{2 \pi}{k} \mathbb{Z} \right)^2 , 1 \right\} = \min\left\{ \frac{4 \pi^2}{k^2} \operatorname{frac}\left(\frac k \ell\right)^2 , 1 \right\} ,
    \end{equation}
    where $\operatorname{frac}(x)$ is the fractional part of any $x \in \mathbb{R}$, that is $\operatorname{frac}(x) \triangleq x - \lfloor x\rfloor$.
    The fact that $\operatorname{frac}(x)=0$ if and only if $x\in \mathbb{Z}$ concludes the proof.
\end{proof}

\subsection{The case of first Betti number two} \label{sec:regularSasakian}

If for Heisenberg nilmanifolds the first eigenvalue can be computed explicitly, in case of three--dimensional K--contact regular manifolds with first Betti number two a structural result can be shown. More specifically, if the upper bound \eqref{eq:newShortUpperBoundBasis} is attained, then 
$M$ is necessarily equipped with the Heisenberg almost--complex structure.

Let $M$ be a three--dimensional K--contact regular manifold with $b_1(M) = 2$. To best formalize our structural result, we start by constructing a strict contactomorphism between $M$ and an appropriate Heisenberg nilmanifold $\Gamma_k \setminus \mathbb{H}^1$, with $k \in \mathbb{N} \setminus \{ 0 \}$.

\begin{proposition} \label{prop:contactomorphism}
    Let $M$ be a three--dimensional K--contact regular manifold with contact 1--form $\alpha_M$ and first Betti number $b_1(M) = 2$. There exist $k \in \mathbb{N} \setminus \{ 0 \}$ and a strict contactomorphism $(M , \alpha_M) \simeq (\Gamma_k \setminus \mathbb{H}^1 , \alpha)$, where $\alpha \triangleq \diff z - y \diff x$.
\end{proposition}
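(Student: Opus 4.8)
The plan is to realize $M$ as a Boothby--Wang circle bundle, to pin down its base and Euler number from the hypothesis $b_1(M)=2$, and then to upgrade an abstract bundle isomorphism onto the Heisenberg model into a \emph{strict} contactomorphism (one satisfying $\Phi^*\alpha=\alpha_M$ on the nose) by exploiting the right translations of $\mathbb{H}^1$. First I would invoke the Boothby--Wang construction recalled in Section~\ref{subsec:circleBundle}: after the rescaling convention of that section (so that the Reeb generates a free $\mathbb{S}^1$--action of period $1$ and $\diff\alpha_M=\pi^*\Omega$ with $0\neq[\Omega]_{\textnormal{hm}}\in H^2_{\textnormal{hm}}(B,\mathbb{Z})$), the manifold $M$ is the total space of a circle bundle $\pi:M\to B$ over a compact symplectic surface $(B,\Omega)$. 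Since $\Omega$ is an area form, $B$ is orientable and connected, and Proposition~\ref{proposition:Gysin} gives $H^1_{\textnormal{dR}}(B,\mathbb{R})\simeq H^1_{\textnormal{dR}}(M,\mathbb{R})$, whence $b_1(B)=b_1(M)=2$. A compact orientable surface with first Betti number $2$ has genus $1$, so $B\cong\mathbb{T}^2$. Over $\mathbb{T}^2$, circle bundles are classified by their Euler number, which here equals $k\triangleq\int_B\Omega\in\mathbb{Z}$; since $[\Omega]_{\textnormal{hm}}\neq0$ we obtain $k\neq0$, and reversing the coorientation if needed we may assume $k\geq1$.

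Next I would normalize the base and lift. By Moser's theorem, two area forms on $\mathbb{T}^2$ of equal total mass are related by a diffeomorphism, so $(B,\Omega)$ is symplectomorphic to $(\mathbb{T}^2_k,\diff v\wedge\diff w)$, the base of $N\triangleq\Gamma_k\setminus\mathbb{H}^1$ (recall $\int_{\mathbb{T}^2_k}\diff v\wedge\diff w=k$ and $\diff\alpha=\pi^*(\diff v\wedge\diff w)$). Because $N\to\mathbb{T}^2_k$ also has Euler number $k$ and the symplectomorphism is orientation--preserving, this base map lifts to a bundle isomorphism $\Psi:M\to N$.

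The heart of the proof, and the step I expect to be the main obstacle, is promoting $\Psi$ to a strict contactomorphism. Since $\Psi$ covers a symplectomorphism, $\Psi^*\alpha$ and $\alpha_M$ are two connection $1$--forms on $M$ with the same curvature $\pi^*\Omega$, hence $\alpha_M-\Psi^*\alpha=\pi^*\beta$ for some \emph{closed} $\beta\in\Omega^1(B)$. Writing $\beta=\diff g+\beta_{\textnormal{harm}}$ via Hodge decomposition, the exact part $\diff g$ can be cancelled by a vertical gauge transformation of the bundle, which shifts the contact form exactly by $\pi^*\diff g$. The genuinely obstructing term is the harmonic part $\beta_{\textnormal{harm}}$, which is nonzero precisely because $b_1=2$ and cannot be removed by a bundle automorphism. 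To absorb it I would use that the right translations $R_{(a,b,c)}:\Gamma_k\,g\mapsto\Gamma_k\,(g\cdot(a,b,c))$ descend to diffeomorphisms of $N$ and satisfy
$$
R_{(a,b,c)}^*\alpha=\alpha+a\,\diff y-b\,\diff x .
$$
By Proposition~\ref{prop:nilmanifold} the forms $\diff x,\diff y$ generate the (pulled--back) harmonic $1$--forms on $N$, so a suitable choice of $(a,b)$ realizes a shift of $\alpha$ by any prescribed harmonic form. Composing $\Psi$ with such a right translation and with the gauge transformation then yields a diffeomorphism $\Phi:M\to N$ with $\Phi^*\alpha=\alpha_M$, i.e. the desired strict contactomorphism.

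The delicate point throughout is strictness: matching $\alpha$ exactly, rather than only the contact distribution, is what forces one to track the full cohomology class of $\beta$, and the right--translation identity above is exactly what makes the Heisenberg model flexible enough to realize every such class. A secondary subtlety worth stating carefully in the write-up is the normalization of the Reeb period (implicit in the conventions of Section~\ref{subsec:circleBundle}), which is what makes the Euler number coincide with $\int_B\Omega=k$ and aligns the period of $\alpha_M$ with that of the model $\diff z-y\,\diff x$.
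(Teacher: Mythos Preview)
Your argument is correct and parallels the paper's through the Boothby--Wang step, the identification of the base as a torus via $b_1=2$ and Proposition~\ref{proposition:Gysin}, the classification of circle bundles by Euler class, and the reduction to $\alpha_M-\Psi^*\alpha=\pi^*\beta$ with $\beta$ closed. Both proofs then remove the exact part of $\beta$ by the explicit vertical gauge $[x,y,z]\mapsto[x,y,z+f]$. The difference lies in how the harmonic part is handled: the paper runs the Gray--Moser trick on the family $\alpha_t=\alpha+t\pi^*\xi$, noting that $\dot\alpha_t(Z)=\pi^*\xi(Z)=0$ forces the conformal factor $\lambda_t$ to stay identically $1$, which is what yields strictness; you instead invoke right translations on $\Gamma_k\setminus\mathbb{H}^1$ and the identity $R_{(a,b,0)}^*\alpha=\alpha+a\,\diff y-b\,\diff x$. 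Your route is more explicit and elementary, trading a general contact-isotopy argument for a feature specific to the Heisenberg model (the left-invariant $\alpha$ is not right-invariant, and the defect under $R_{(a,b,0)}$ is exactly a constant-coefficient horizontal form). One small point worth tightening: your $\beta_{\textnormal{harm}}$ is harmonic on $B$ for an unspecified metric, so after transport to $N$ it need not lie exactly in $\operatorname{span}\{\diff x,\diff y\}$; the residual exact term is harmless and folds back into the gauge step, but it is cleanest to perform the Hodge decomposition on $\mathbb{T}^2_k$ with its flat metric after pushing everything to $N$.
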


\begin{proof}
    Via the notation of Section \ref{subsec:circleBundle}, up to rescaling the contact 1--form $\alpha_M$ and the Reeb $\Reeb_M$ of $M$ by a constant, there exist $k \in \mathbb{N}$ and a compact symplectic surface $(\Sigma_{\mathfrak{g}},\Omega)$ of genus $\mathfrak{g}$, associated with an integral co-cycle $k = [ \Omega ]_{\textnormal{hm}} \in H^2_{\textnormal{hm}}(\Sigma_{\mathfrak{g}},\mathbb{Z})$, such that $M$ is the bundle space of a circle bundle $\pi_M : M \to \Sigma_{\mathfrak{g}}$ satisfying $\diff \alpha_M = \pi^*_M \Omega$. Also, the Reeb generates the free $\mathbb{S}^1$--action $\Theta : \mathbb{S}^1 \times M \to M$ on $\pi : M \to \Sigma_{\mathfrak{g}}$, whose orbits has period one. Since $b_1(M) = 2$, due to Proposition \ref{proposition:Gysin} and Poincar\'e's duality for homology we have $k \neq 0$, and we may identify $\Sigma_{\mathfrak{g}} \simeq \mathbb{T}^2_k$ and $\Omega \simeq \diff v \wedge \diff w$. But circle bundles are uniquely classified via the Euler class of their basis, thus the Euler class $[ \Omega ]_{\textnormal{hm}} = k$ uniquely identifies $\pi_M : M \to \mathbb{T}^2_k$. In particular, since the mapping $\pi : \Gamma_k \setminus \mathbb{H}^1 \to \mathbb{T}^2_k$ constructed in Section \ref{sec:Heisenberg} is a circle bundle, we conclude that $M$ is diffeomorphic to $\Gamma_k \setminus \mathbb{H}^1$ through some bundle isomorphism. This latter isomorphism also enables equipping $\Gamma_k \setminus \mathbb{H}^1$ with a contact 1--form $\alpha_k$ that satisfies $\diff \alpha_k = \pi^* \Omega$, by appropriately manipulating $\alpha_M$, showing the previous diffeomorphism yields a strict contactomorphism $(M , \alpha_M) \simeq (\Gamma_k \setminus \mathbb{H}^1 , \alpha_k)$. Note that 
    due to the explicit expression of the $\mathbb{S}^1$--action $\Theta$ in Section \ref{sec:Heisenberg}, we infer that the Reeb of $\alpha_k$ is $Z \triangleq \partial_z$.

    Let us conclude by showing that $(\Gamma_k \setminus \mathbb{H}^1 , \alpha_k) \simeq (\Gamma_k \setminus \mathbb{H}^1 , \alpha)$ through strict contactomorphisms, with $\alpha \triangleq \diff z - y \, \diff x$ as defined in Section \ref{sec:Heisenberg}. For this, we observe that $\diff \alpha_k = \pi^* \Omega = \diff \alpha$, thus again Proposition \ref{proposition:Gysin} yields the existence of $\xi \in \Omega^1(\mathbb{T}^2_k)$ harmonic and $f \in C^{\infty}(\Gamma_k \setminus \mathbb{H}^1,\mathbb{R})$ such that $\alpha_k = \alpha + \pi^* \xi + \diff f$. Since $\alpha_k$ and $\alpha$ share the same Reeb $Z$ and $\pi^* \xi$ is horizontal, it holds that $\mathcal{L}_Z f = 0$, hence $f([x,y,z]_{\Gamma_k}) = f_{\mathbb{T}^2_k}([x]_{k \mathbb{Z}} , [y]_{\mathbb{R}})$ for some $f_{\mathbb{T}^2_k} \in C^{\infty}(\mathbb{T}^2_k,\mathbb{R})$. We infer that the mapping
    $$
    [x,y,z]_{\Gamma_k} \in \Gamma_k \setminus \mathbb{H}^1 \mapsto [x,y,z+f([x,y,z]_{\Gamma_k})] \in \Gamma_k \setminus \mathbb{H}^1
    $$
    generates a strict contactomorphism $(\Gamma_k \setminus \mathbb{H}^1 , \alpha_k) \simeq (\Gamma_k \setminus \mathbb{H}^1 , \alpha + \pi^* \xi)$, with $Z$ again as Reeb for $\alpha + \pi^* \xi$. At this step, define the smooth family of contact 1--forms $(\alpha_t)_{t \in [0,1]}$, where each $\alpha_t \triangleq \alpha + t \pi^* \xi$ has Reeb $Z$. Moser trick yields the existence of a smooth isotopy $(\psi_t)_{t \in [0,1]}$ of $\Gamma_k \setminus \mathbb{H}^1$ and a smooth family of smooth functions $(\lambda_t)_{t \in [0,1]}$, with $\lambda_t : \Gamma_k \setminus \mathbb{H}^1 \to \mathbb{R}_+$, such that $\psi^*_t \alpha_t = \lambda_t \alpha_0$ and $\frac{\diff}{\diff t}( \log \lambda_t ) \circ \psi^{-1}_t = \left( \frac{\diff}{\diff t} \alpha_t \right)(Z) = 0$ for every $t \in [0,1]$, see also \cite{Geiges2008}. In particular $\psi^*_1 (\alpha + \pi^* \xi) = \lambda_1 \alpha$, and since $\lambda_1 = \lambda_0 = 1$, 
    we finally infer that $(\Gamma_k \setminus \mathbb{H}^1 , \alpha_k) \simeq (\Gamma_k \setminus \mathbb{H}^1 , \alpha)$ through strict contactomorphisms.
\end{proof}

Due to Proposition \ref{prop:contactomorphism}, from now on we assume that $(M,\alpha_M) = (\Gamma_k \setminus \mathbb{H}^1 , \alpha)$. Through the pullback of a contactomorphism, the almost--complex structure $\Jop$ and the corresponding adapted Riemmanian metric $\metric$ on $M$ are naturally ``pulled--back'' on $\Gamma_k \setminus \mathbb{H}^1$. Thus, we may assume $\Gamma_k \setminus \mathbb{H}^1$ equipped with some almost--complex structure and corresponding adapted Riemmanian metric, still denoted by $\Jop$ and $\metric$ respectively. Similarly, any closed $A \in \Omega^1(M)$ can be naturally embedded on a closed 1--form on $\Gamma_k \setminus \mathbb{H}^1$. The main result of this section below states that, if the upper bound \eqref{eq:newShortUpperBoundBasis} is attained, then $\Jop$ must be the Heisenberg almost--complex structure.

\begin{theorem} \label{theo:equalityGeneral3DContact}
    Let 
    $A \in \Omega^1(M)$ be such that $\diffRm \horiz{A} = 0$. Through the above contact--based identifications on $\Gamma_k \setminus \mathbb{H}^1$, if equality holds in bound \eqref{eq:newShortUpperBound}, then 
    $M$ is necessarily equipped with the Heisenberg almost--complex structure defined in Section \ref{sec:Heisenberg}.
\end{theorem}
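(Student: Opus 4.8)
The plan is to transport the equality down to the base torus $\mathbb{T}^2_k$, invoke the Riemannian rigidity available there, and then read the conclusion back as a constraint on $\Jop$. Since $\diffRm\horiz{A}=0$, Lemma \ref{lemma:Gauge} and Proposition \ref{proposition:Gysin} let me replace $A$ by a gauge--equivalent basic representative $\pi^*\xi_B$ with $\xi_B$ harmonic on $\mathbb{T}^2_k$, so that Corollary \ref{corol:BoundBasis} identifies the right--hand side of \eqref{eq:newShortUpperBound} with $\textnormal{dist}(\xi_B,\Lfrak^1(\mathbb{T}^2_k))^2/\textnormal{vol}(\mathbb{T}^2_k)$. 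The left inequality of Theorem \ref{theo:Landau} gives $\lambda_1(A)\le\lambda_1(A_B)$, while the two--dimensional Colbois--Savo bound \eqref{eq:colbois-savo} applied on $(\mathbb{T}^2_k,\metricB)$ gives $\lambda_1(A_B)\le\textnormal{dist}(\xi_B,\Lfrak^1(\mathbb{T}^2_k))^2/\textnormal{vol}(\mathbb{T}^2_k)$. Under the assumed equality these close up into the chain $\textnormal{dist}(\xi_B,\Lfrak^1(\mathbb{T}^2_k))^2/\textnormal{vol}(\mathbb{T}^2_k)=\lambda_1(A)\le\lambda_1(A_B)\le\textnormal{dist}(\xi_B,\Lfrak^1(\mathbb{T}^2_k))^2/\textnormal{vol}(\mathbb{T}^2_k)$, forcing equality in the base bound, i.e.\ $\lambda_1(A_B)=\textnormal{dist}(\xi_B,\Lfrak^1(\mathbb{T}^2_k))^2/\textnormal{vol}(\mathbb{T}^2_k)$.

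The second step is purely Riemannian and is where the heavy input enters. The base $\mathbb{T}^2_k$ is a genus--one surface, so equality in the magnetic upper bound falls exactly under the rigidity result \cite[Theorem~4.2]{Colbois2017}, which forces $\metricB$ to be flat; this is in agreement with \eqref{eq:newConstantForm}, since the minimizing harmonic form $\xi_B-\omega_B$ then has constant pointwise norm $\normB{\xi_B-\omega_B}$. Because $M$ is three--dimensional, K--contact and regular, it is Sasakian, so $(\mathbb{T}^2_k,\metricB,\Omega)$ is Kähler and the compatible almost--complex structure $\Jop_B$ is parallel for the Levi--Civita connection of $\metricB$. A parallel complex structure on a flat torus is constant in flat coordinates; lifting it horizontally through the Riemannian submersion $\pi$ and using the right $\mathbb{S}^1$--invariance \eqref{eq:JS1invariant}--\eqref{eq:liftS1invariant}, I obtain that $\Jop$ has constant components in the left--invariant frame $\{X,Y\}$, so that $\metric$ is a left--invariant metric on $\Gamma_k\setminus\mathbb{H}^1$.

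The final step, which I expect to be the main obstacle, is to upgrade ``left--invariant / flat base'' to the \emph{specific} Heisenberg $\Jop$ of Section \ref{sec:Heisenberg} (the one making $\{X,Y,Z\}$ orthonormal, equivalently $\Jop X=Y$, $\Jop Y=-X$). The natural tool is the pointwise constancy \eqref{eq:constantForm} together with the explicit integral basis $\{dx,dy\}$ of the Rumin--harmonic forms from Proposition \ref{prop:nilmanifold}: the form $\xi-\omega$ is an integral combination of $dx$ and $dy$ of constant $\Jop$--norm, and one matches this against the explicit spectrum of Corollary \ref{corol:equalityHeisenberg}. The delicate point is that flatness alone leaves a whole family of constant compatible structures --- all sharing $\det h\equiv 1$ in any frame with $\diff\alpha(X,Y)=1$, forced by the compatibility $h(\cdot,\cdot)=\diff\alpha(\cdot,\Jop\cdot)$ --- so the normalization to the standard Heisenberg structure must be extracted from the integrality of the period lattice $\Lfrak^1(\mathbb{T}^2_k)=\{2\pi(k^{-1}m\,\diff v+n\,\diff w)\}$ interacting with the requirement that equality hold for the prescribed potential. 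Pinning down this last normalization, rather than the transport in the first step or the two--dimensional rigidity in the second, is where the real difficulty lies.
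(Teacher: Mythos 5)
Your reduction to the base is essentially the paper's: gauge $\horiz{A}$ to $\pi^*\xi_B$ via Proposition~\ref{proposition:Gysin}, identify the right--hand side of \eqref{eq:newShortUpperBound} with $\textnormal{dist}(\xi_B,\Lfrak^1(\mathbb{T}^2_k))^2/\textnormal{vol}(\mathbb{T}^2_k)$ through Corollary~\ref{corol:BoundBasis}, squeeze using $\lambda_1(A)\le\lambda_1(A_B)$ from Theorem~\ref{theo:Landau} together with the two--dimensional upper bound (the paper cites \cite[Theorem 3]{Colbois2017} where you invoke \eqref{eq:colbois-savo}), and conclude from \cite[Theorem 4]{Colbois2017} that $\metricP$ is flat. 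Your additional detour through the K\"ahler/parallel--$\Jop_B$ argument to get constancy of $\Jop$ in the frame $\{X,Y\}$ is correct but is not part of the paper's argument and is not needed for its conclusion.

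The problem is that you stop there. The theorem asserts that $\Jop$ is the \emph{specific} Heisenberg structure $\Jop X=Y$, $\Jop Y=-X$, and your proposal explicitly leaves this final normalization open, observing (correctly) that flatness plus compatibility with $\Omega=\diff v\wedge\diff w$ only forces the constant metric on the base to have determinant one, leaving a whole family of candidate structures. That is a genuine gap: without closing it you have not proved the statement. Moreover, the escape route you sketch --- matching against the explicit spectrum of Corollary~\ref{corol:equalityHeisenberg} and the lattice $\Lfrak^1(\mathbb{T}^2_k)$ --- is circular, since Corollary~\ref{corol:equalityHeisenberg} is proved under the standing assumption that $M$ already carries the Heisenberg almost--complex structure (its proof uses that $\metricP$ is the standard flat metric making $\{\diff v,\diff w\}$ orthonormal), and the lattice $\Lfrak^1(\mathbb{T}^2_k)$ and the harmonic forms $\diff v,\diff w$ are metric--independent, so they cannot by themselves select an orthonormal frame. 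For comparison, the paper dispatches this last step in a single line, asserting that flatness of $\metricP$ together with $X=\widetilde{\partial_v}$, $Y=\widetilde{\partial_w}$ forces $\Jop X=Y$; your diagnosis that something more is needed here is a substantive observation about that step, but identifying the difficulty is not the same as resolving it, and as written your proof is incomplete precisely at the point the theorem is about.
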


\begin{proof}
    Let us identify $(M,\alpha_M) = (\Gamma_k \setminus \mathbb{H}^1 , \alpha)$ as above. Consider any almost--complex structure $\Jop : TM \to TM$ and let $\metric$ be the associated adapted Riemannian metric. The construction of the circle bundle $\pi : M \to \mathbb{T}^2_k$ in Section \ref{sec:Heisenberg} and \eqref{eq:JS1invariant}--\eqref{eq:dalphaS1invariant} still hold, therefore we may equip $\mathbb{T}^2_k$ with the well--defined Riemannian metric $\metricP(q)(v_q,w_q) \triangleq \metric(p)(\widetilde v_p,\widetilde w_p)$ for $q \in \mathbb{T}^2_k$ and $v , w \in T_q \mathbb{T}^2_k$, and any $p \in \pi^{-1}(q)$. Note that, unlike the setting of Section \ref{sec:Heisenberg}, $\metricP$ is not a priori flat. Let us show that $\metricP$ is indeed flat 
    when equality is attained in the upper bound \eqref{eq:newShortUpperBound}.

    For this, due to Propositions \ref{proposition:Gysin} and \ref{prop:nilmanifold}, there exist $a , b \in \mathbb{R}$ and $f \in C^{\infty}(M,\mathbb{R})$ such that $\horiz{A} = a \; \diff x + b \; \diff y + \diffH f = \pi^*( a \; \diff v + b \; \diff w ) + \diffH f$. If equality holds in bound \eqref{eq:newShortUpperBound}, then thanks to Corollary \ref{corol:BoundBasis}, on the one hand we infer that
    \begin{equation} \label{eq:equalityLambda1withTorusProof}
        \lambda_1(A) = \frac{\textnormal{dist}(a \; \diff v + b \; \diff w , \mathfrak{L}^1(\mathbb{T}^2_k))^2}{\textnormal{vol}(\mathbb{T}^2_k)} , \quad \textnormal{with} \quad a \; \diff v + b \; \diff w \in \Omega^1(\mathbb{T}^2_k) ,
    \end{equation}
    and $\mathfrak{L}^1(\mathbb{T}^2_k)$ defined as in Corollary \ref{corol:BoundBasis}. On the other hand, Theorem \ref{theo:Landau} in particular yields that $\lambda_1(A) \le \lambda_1(a \; \diff v + b \; \diff w)$, with $\lambda_1(a \; \diff v + b \; \diff w)$ the first eigenvalue of the magnetic Laplacian on $\mathbb{T}^2_k$ as defined in Section \ref{sec:Landau}. By combining this latter inequality with \eqref{eq:equalityLambda1withTorusProof}, thanks to \cite[Theorem 3]{Colbois2017} we infer that
    $$
    \lambda_1(a \; \diff v + b \; \diff w) = \frac{\textnormal{dist}(a \; \diff v + b \; \diff w , \mathfrak{L}^1(\mathbb{T}^2_k))^2}{\textnormal{vol}(\mathbb{T}^2_k)} ,
    $$
    which, due to \cite[Theorem 4]{Colbois2017}, finally yields that the metric $\metricP$ must be flat.

    At this step, if $X , Y \in \Gamma(T M)$ are defined as in Section \ref{sec:Heisenberg}, from $X = \widetilde \partial_v$, $Y = \widetilde \partial_w$ we conclude that the  almost-contact structure $\Jop$ must satisfy $\Jop X = Y$ and $\Jop Y = -X$, thus it must be the Heisenberg almost--complex structure defined in Section \ref{sec:Heisenberg}.
\end{proof}

As natural and very interesting consequence, Corollary \ref{corol:FindKHeisenbergNew} and Theorem \ref{theo:equalityGeneral3DContact} enable computing the Chern class $k$ of $M$ in case $
\lambda_1(A)$ is known for a sufficiently rich family of magnetic potentials $A$. In particular, in this case we recover full information on $M$ and its contact structure. 
Indeed, the following holds true.

\begin{corollary}
    Let $M$ be a three--dimensional K--contact regular manifold with contact 1--form $\alpha_M$ and first Betti number $b_1(M) = 2$. In addition, let $A^1, A^2 \in \Omega^1(M)$ be linearly independent, Rumin--harmonic, and such that equality holds in bound \eqref{eq:newShortUpperBound} for either $A^1$ or $A^2$. Therefore, according to Theorem~\ref{theo:equalityGeneral3DContact}, $M$ is strictly contactomorphic to $(\Gamma_k\setminus\mathbb{H}^2,\alpha)$, where $\alpha\triangleq \diff z-y\diff y$. Moreover, there exists $\gamma_1,\gamma_2\in\mathbb{R}$ such that the Chern class $k \in \mathbb{N}\setminus\{0\}$ is uniquely determined by the sequence $\{ \lambda_1( ( \gamma_1 A^1 + \gamma_2 A^2 ) / \ell ) : \ell \in \mathbb{N}^* \}$.
\end{corollary}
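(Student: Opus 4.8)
The plan is to reduce everything to the explicit Heisenberg computation of Corollary~\ref{corol:FindKHeisenbergNew} and then use a single linear--algebra step to isolate the $\diff x$ direction. First, since $A^1,A^2$ are Rumin--harmonic we have $\diffRm \horiz{A^i}=0$, so the hypothesis that equality holds in \eqref{eq:newShortUpperBound} for one of them lets me invoke Theorem~\ref{theo:equalityGeneral3DContact}: combined with Proposition~\ref{prop:contactomorphism}, this identifies $(M,\alpha_M)$ with the Heisenberg nilmanifold $(\Gamma_k\setminus\mathbb{H}^1,\alpha)$ \emph{equipped with the Heisenberg almost--complex structure}. We are therefore squarely in the setting of Section~\ref{sec:Heisenberg}, where the explicit eigenvalue formulas hold.

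Next, by Proposition~\ref{prop:nilmanifold} the space of Rumin--harmonic horizontal $1$--forms is exactly $\operatorname{span}\{\diff x,\diff y\}$. Hence, up to gauge (Lemma~\ref{lemma:Gauge}) I may take $A^i$ horizontal and write $\horiz{A^i}=a_i\,\diff x+b_i\,\diff y$ with $a_i,b_i\in\mathbb{R}$; linear independence of $A^1,A^2$ then means the matrix $\left(\begin{smallmatrix} a_1 & a_2\\ b_1 & b_2\end{smallmatrix}\right)$ is invertible. I would then solve the linear system
\[
\gamma_1 a_1+\gamma_2 a_2=2\pi,\qquad \gamma_1 b_1+\gamma_2 b_2=0,
\]
which admits a unique solution $(\gamma_1,\gamma_2)\in\mathbb{R}^2$. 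With this choice the $\diff y$--component is cancelled and $\horiz{\gamma_1 A^1+\gamma_2 A^2}=2\pi\,\diff x$ exactly.

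Finally, since any $A$ and its horizontal representative $\horiz{A}$ differ by a multiple of $\alpha$, Lemma~\ref{lemma:Gauge} gives $\lambda_1\big((\gamma_1 A^1+\gamma_2 A^2)/\ell\big)=\lambda_1\big(2\pi\ell^{-1}\,\diff x\big)$ for every $\ell\in\mathbb{N}^*$. By Corollary~\ref{corol:FindKHeisenbergNew} the resulting sequence detects precisely the divisors of $k$ through its vanishing locus, $\lambda_1(2\pi\ell^{-1}\,\diff x)=0\iff \ell\mid k$, so that $k=\max\{\ell\in\mathbb{N}^*:\lambda_1(2\pi\ell^{-1}\,\diff x)=0\}$ is uniquely recovered, concluding the proof.

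The crux of the argument lies entirely in the first step: one needs the equality hypothesis to pin down the almost--complex structure as the Heisenberg one via Theorem~\ref{theo:equalityGeneral3DContact}, because only then does the explicit formula of Corollary~\ref{corol:FindKHeisenbergNew} become available, and in particular the asymmetric appearance of the lattice $2\pi k^{-1}\mathbb{Z}$ in the $\diff x$ direction versus $2\pi\mathbb{Z}$ in the $\diff y$ direction—the very feature that makes $k$ detectable—is a consequence of that structure. The subsequent linear--algebra and gauge--invariance steps are routine.
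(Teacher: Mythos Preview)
Your proposal is correct and follows essentially the same route as the paper's proof: use Theorem~\ref{theo:equalityGeneral3DContact} (via Proposition~\ref{prop:contactomorphism}) to land in the Heisenberg nilmanifold with the Heisenberg almost--complex structure, then use linear independence of $A^1,A^2$ and Proposition~\ref{prop:nilmanifold} to pick $\gamma_1,\gamma_2$ so that the combination equals $2\pi\,\diff x$, and conclude via Corollary~\ref{corol:FindKHeisenbergNew}. Your write--up is in fact slightly more explicit than the paper's on the linear--algebra step and on how $k$ is read off from the vanishing locus of the sequence.
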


\begin{proof}
    Due to Proposition \ref{prop:contactomorphism}, let $\mathfrak{c} : (M , \alpha_M) \to (\Gamma_k \setminus \mathbb{H}^1 , \alpha)$ be a contactomorphism. From Theorem \ref{theo:equalityGeneral3DContact} and our assumptions on $A_1$ and $A_2$, we infer that the almost--complex structure $J$ must be the Heisenberg almost--complex structure. Moreover, since $A^1$ and $A^2$ are linearly independent, it is always possible to find $\gamma_1,\gamma_2\in\mathbb{R}$ such that $\mathfrak{c}^* \left(\gamma_1\horiz{A^1} + \gamma_2 \horiz{A^2}\right) = 2\pi \, \diff x$. 
    The conclusion follows from Corollary~\ref{corol:FindKHeisenbergNew}.
\end{proof}

\appendix

\section{Proof of Lemma \ref{lemma:LaxMilgram}}
\label{sec:AppendixLaxMilgram}
We make use of the following lemma, which, due to the compactness of $M$, can be easily proved through a partition--of--unity argument.

\begin{lemma} \label{lemma:localW2}
    Given any neighborhood $U \subseteq M$, let $\{ \Reeb , E_1, \dots , E_{2n} \}$ denote a, e.g., Darboux frame defined in $U$. For $\varphi \in L^2(M,\mathbb{C})$, it holds that $\varphi \in \Wsecond(M,\mathbb{C})$ if $\varphi$ satisfies, for every $j,k=1,\dots,2n$ and neighborhood $U \subseteq M$,
    $$
    E_j \varphi \in L^2(U,\mathbb{C}) \ \ \textnormal{and} \ \ E_k E_j \varphi \in L^2(U,\mathbb{C}) \ \ \textnormal{(in the distributional sense)} .
    $$
\end{lemma}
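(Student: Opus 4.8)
The plan is to reduce the global assertion to a local, Euclidean approximation problem via a partition of unity, and then to manufacture the required smooth approximants by Friedrichs mollification; the only genuine difficulty will be a second-order commutator estimate, which I handle after first upgrading $\varphi$ to ordinary $H^1$ regularity.

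First I would record the local comparison of norms. On a fixed Darboux chart $U\cong V\subseteq\mathbb{R}^{2n+1}$ the metric coefficients and Christoffel symbols of $\metric$ are smooth, hence bounded together with their derivatives on any relatively compact subset. Since $\{E_1,\dots,E_{2n}\}$ spans $\Dist$ and $\nablaRm\psi=(\diffH\psi)^{\sharp}$, a direct computation gives, for smooth $\psi$, the two-sided bound $\norm{\nablaRm\psi}^2\asymp\sum_{j=1}^{2n}|E_j\psi|^2$; expanding $\doubleNablaRm\psi(\mu,X)=\mu\circ\Pi(\nabla_{\Pi(X)}\nablaRm\psi)$ in the frame likewise yields the one-sided bound $\norm{\doubleNablaRm\psi}^2\le C\big(\sum_{j,k=1}^{2n}|E_kE_j\psi|^2+\sum_{j=1}^{2n}|E_j\psi|^2+|\psi|^2\big)$, with constants uniform on the chart. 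Consequently, convergence in the ``frame norm'' $\|\psi\|_{L^2}^2+\sum_j\|E_j\psi\|_{L^2}^2+\sum_{j,k}\|E_kE_j\psi\|_{L^2}^2$ forces convergence in $\normWsecond{\cdot}$, so it is enough to approximate $\varphi$ in the frame norm by functions smooth on $M$.

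Next, choose a finite cover of the compact $M$ by Darboux charts $\{U_i\}$ and a subordinate partition of unity $\{\rho_i\}\subseteq C^\infty_c(U_i)$ with $\sum_i\rho_i\equiv1$. By the Leibniz rule and the hypotheses, each $u_i:=\rho_i\varphi$ is compactly supported in $U_i$ with $E_ju_i,\,E_kE_ju_i\in L^2$, and since the sum is finite it suffices to approximate each $u_i$ smoothly in the frame norm. At this point I would exploit the structure of the Darboux frame: because $[E_j,E_{j+n}]=-\Reeb$, the second-order hypothesis gives $\Reeb u_i=-(E_jE_{j+n}-E_{j+n}E_j)u_i\in L^2$, and together with $\partial_{y_j}u_i=E_{j+n}u_i$ and $\partial_{x_j}u_i=E_ju_i-y_j\Reeb u_i$ this shows every ordinary first derivative of $u_i$ lies in $L^2$, i.e. $u_i\in H^1$. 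This upgrade is the key that converts the purely horizontal second-order control into something the commutator machinery can digest.

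Finally I would mollify, setting $u_i^\varepsilon:=\eta_\varepsilon*u_i$ (smooth, compactly supported, extended by zero to $M$). Since $u_i\in H^1$, ordinary mollification gives $u_i^\varepsilon\to u_i$ in $H^1$, so $\partial_m u_i^\varepsilon\to\partial_m u_i$ in $L^2$; as $E_j=\sum_m b_j^m\partial_m$ and the first-order part of $E_kE_j$ equals $\sum_m(E_kb_j^m)\partial_m$ with smooth bounded coefficients, both $E_ju_i^\varepsilon\to E_ju_i$ and the lower-order part of $E_kE_ju_i^\varepsilon$ converge in $L^2$ at once. The heart of the matter, and the step I expect to be the main obstacle, is the purely second-order part $P:=\sum_{m,l}b_j^mb_k^l\partial_l\partial_m$: one must show $Pu_i^\varepsilon\to Pu_i$ in $L^2$, that is $[P,\eta_\varepsilon*]u_i\to0$, even though the individual derivatives $\partial_l\partial_m u_i$ need not lie in $L^2$. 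Writing $v_m:=\partial_m u_i\in L^2$, the commutator splits into terms of the form $[c,\eta_\varepsilon*]\partial_l v_m$ with $c$ smooth; integrating by parts in the kernel recasts each as $\partial_l\big([c,\eta_\varepsilon*]v_m\big)-[\partial_l c,\eta_\varepsilon*]v_m$, and the standard moment cancellation $\int z_{l'}(\partial_l\eta_\varepsilon)(z)\,\mathrm{d}z=-\delta_{l'l}$ shows both pieces tend to $0$ in $L^2$ (for $v_m\in L^2$, by a uniform-bound and density argument). Summing over the partition of unity then produces a sequence smooth on $M$ converging to $\varphi$ in $\normWsecond{\cdot}$, whence $\varphi\in\Wsecond(M,\mathbb{C})$.
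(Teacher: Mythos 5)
Your proposal is correct, and it follows the very strategy the paper invokes: the paper gives no actual proof of this lemma, stating only that it ``can be easily proved through a partition--of--unity argument'' thanks to the compactness of $M$, so your argument is a complete implementation of that sketch. What you add beyond the paper's one-line remark is precisely the nontrivial content: (i) the local comparison showing that the Darboux-frame norm dominates the $\Wsecond$-norm on functions supported in a chart; (ii) the key upgrade to full Euclidean $H^1$ regularity of $u_i=\rho_i\varphi$ via the bracket relation $[E_j,E_{j+n}]=-\Reeb$, which extracts $\Reeb u_i\in L^2$ from the purely horizontal second-order hypothesis and is what makes mollification usable at all; and (iii) the Friedrichs commutator lemma to handle the principal part $P=\sum_{l,m}b_k^l b_j^m\partial_l\partial_m$, where you correctly exploit that $Pu_i=E_kE_ju_i-\sum_m(E_kb_j^m)\partial_m u_i\in L^2$ even though the individual derivatives $\partial_l\partial_m u_i$ need not be, so that $Pu_i^\varepsilon-Pu_i$ splits into termwise commutators $[c_{lm},\eta_\varepsilon*]\partial_l v_m$ (vanishing by the uniform-bound-plus-density argument) and $\eta_\varepsilon*(Pu_i)-Pu_i$ (vanishing since $Pu_i\in L^2$). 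All three steps are sound, the identity $[c,\eta_\varepsilon*]\partial_l v=\partial_l([c,\eta_\varepsilon*]v)-[\partial_l c,\eta_\varepsilon*]v$ checks out, and the finite sum over the partition of unity closes the argument.
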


\begin{proof}[Proof of Lemma \ref{lemma:LaxMilgram}]
    The Lax-Milgram Theorem yields that, for at least one real $\lambda > 0$, for every $f \in L^2(M,\mathbb{C})$ there uniquely exists $\varphi_f \in \Wfirst(M,\mathbb{C})$ such that
    \begin{equation} \label{eq:LaxProof}
        \int_M \metric(\nablaRmA{A} \varphi_f , \nablaRmA{A} \varphi) + \lambda \int_M \varphi_f \overline{\varphi} = \int_M f \overline{\varphi} , \ \textnormal{for every} \ \varphi \in \Wfirst(M,\mathbb{C}) .
    \end{equation}
    To prove $\textnormal{Rng}( \DeltaRmA{A} + \lambda I ) = L^2(M,\mathbb{C})$, it is sufficient to show that 
    $\varphi_f \in \Wsecond(M,\mathbb{C})$. For this, given any neighborhood $U \subseteq M$, let $\{ \Reeb , E_1, \dots , E_{2n} \}$ denote a, e.g., Darboux frame defined in $U$. From \eqref{eq:LaxProof} and Lemma \ref{lemma:MagnLap} we infer that (see also \cite{Franceschi2020})
    \begin{align*}
        \sum^{2n}_{j=1} \int_U \varphi_f &\overline{E^2_j \varphi + (\diver(E_j) - 2 i \horiz{A}(E_j)) E_j \varphi} = \\
        &= \int_U \Big( f + ( i \delta \horiz{A}  - \norm{\horiz{A}}^2 - \lambda ) \varphi_f \Big) \overline{\varphi} , \ \textnormal{for every} \ \varphi \in C^{\infty}_c(U,\mathbb{C}) .
    \end{align*}
    Since $M$ is a contact manifold, this latter variational problem falls into the hypoelliptic setting of \cite[Theorems 16 and 18]{Rothschild1976}, hence we infer that $E_j \varphi , E_k E_j \varphi \in L^2(U,\mathbb{C})$, for every $j,k=1,\dots,2n$, concluding that $\varphi_f \in \Wsecond(M,\mathbb{C})$ thanks to Lemma \ref{lemma:localW2}.

    To prove the second claim, we follows the arguments leveraged in \cite[Corollary B.3]{Chitour2024}. The first claim yields that $\DeltaRmA{A}$ is self--adjoint with $\sigma(\DeltaRmA{A}) \subseteq [0,+\infty)$. Hence, for every strictly positive $\lambda \in \mathbb{R}$, the resolvent
    $$
    R_{\lambda} \triangleq (\DeltaRmA{A} + \lambda I)^{-1} : \ L^2(M,\mathbb{C}) \to \Wsecond(M,\mathbb{C}) \subseteq L^2(M,\mathbb{C})
    $$
    is a well--defined linear and bounded operator. Therefore, for any $h \in C(M,(0,+\infty))$ and $\psi \in L^2(M,\mathbb{C})$, by denoting $\varphi \triangleq R_{\lambda} \psi \in W^2_H(M,\mathbb{C})$ and thanks to Lemma \ref{lemma:MagnLap} and Young's inequality we may additionally compute
    \begin{align*}
        \int_M \psi \overline{\varphi} &= \int_M \norm{\nablaRm \varphi}^2 + \int_M (\lambda + \norm{\horiz{A}}^2) | \varphi |^2  + 2 \ \textnormal{Im}\left( \int_M \scalar{\diffH \varphi}{\horiz{A} \varphi} \right) \\
        &\ge \int_M \norm{\nablaRm \varphi}^2 + \int_M (\lambda + \norm{\horiz{A}}^2) | \varphi |^2 - \int_M \frac{\norm{\nablaRm \varphi}^2}{h} - \int_M h \norm{\horiz{A}}^2 |\varphi|^2 .
    \end{align*}
    By taking $h \in C(M,(0,+\infty))$ defined, for $p \in M$, as $h(p) = 2$ when $\norm{\horiz{A}(p)}^2 < \lambda / 2$ and $h(p) = ( \norm{\horiz{A}(p)}^2 + \lambda / 2 ) / \norm{\horiz{A}(p)}^2$ otherwise, the latter inequality, together with the Cauchy--Schwartz inequality yield that
    \begin{align*}
        \min \left( \frac{1}{2} , \frac{\lambda}{2} \right) \| \varphi \|^2_{\Wfirst} \le \int_M \psi \overline{\varphi} \le \| \psi \|_{L^2} \| \varphi \|_{\Wfirst} ,
    \end{align*}
    from which it follows the existence of some $C_{\lambda} > 0$ such that $\| R_{\lambda} \psi \|_{\Wfirst} \le C_{\lambda} \| \psi \|_{L^2}$, for every $\psi \in L^2(M,\mathbb{C})$. Since the embedding $\Wfirst(M,\mathbb{C}) \hookrightarrow L^2(M,\mathbb{C})$ is compact \cite{Franceschi2020}, we conclude that $R_{\lambda} : L^2(M,\mathbb{C}) \to L^2(M,\mathbb{C})$ is compact, and therefore that $R_z : L^2(M,\mathbb{C}) \to L^2(M,\mathbb{C})$ is compact for every $z \in \rho(\DeltaRmA{A})$. We conclude from the fact that a self--adjoint operator with compact resolvent has discrete spectrum.
\end{proof}

\section{Global magnetic fields}
\label{sec:AppendixMagneticFields}
The definition given in this work of magnetic fields and potentials is local, in the sense that we consider magnetic potentials as 1--forms. Although this is the canonical approach in physics, and it is sufficient for our purposes, it is possible to give a somewhat ``more global'' definition of magnetic potentials and fields, which we briefly recall here.

Let $M$ be a contact manifold with contact distribution $\xi$ (here, we do not assume $M$ to be coorientable, therefore the contact form $\alpha$ may exist only locally). Consider a complex line bundle $\mathfrak{p} : L \to M$ endowed with some Hermitian metric $g$. Generally, a \emph{magnetic potential} is a partial connection $\nabla^\xi$ on $L$ along $\xi$. More specifically, the mapping $\nabla^\xi: \Gamma(L) \to \OmegaRm^1(M,L)\triangleq \OmegaRm^1(M)\otimes \Gamma(L)$ is $\mathbb{C}$-linear and satisfies the Leibniz rule 
$$
\nabla^\xi_X (f s) = f \nabla^\xi_X s + (X f) s , \quad f \in C^\infty(M,\mathbb{C}) , \ s \in \Gamma(L) , \ X \in \Gamma(\xi) .
$$
The \emph{horizontal magnetic Laplacian $\DeltaRmA{\nabla^\xi}$} associated with $\nabla^\xi$ may be defined as 
$$
s \in \Gamma(L) \mapsto \DeltaRmA{\nabla^\xi} s \triangleq (\nabla^\xi)^* \nabla^\xi s \in \Gamma(L) .
$$
This definition boils down to the one in Section \ref{sec:Preliminaries} in case $L$ is the trivial bundle $M\times \mathbb{C}$ endowed with the standard Hermitian metric, and $\nabla^\xi = {\diffH - i \horiz{A}}$ for some $A \in \Omega^1(M)$.

As in the canonical case, we would like to define a family of extensions $\nabla^\xi : \OmegaRm^k(M,L) \to \OmegaRm^{k+1}(M,L)$, i.e., to forms with values in $L$, by defining $\OmegaRm^k(M,L) \triangleq \OmegaRm^k(M) \otimes \Gamma(L)$ and
$$
\nabla^\xi(\omega \otimes s) = \diffRm \omega \otimes s + (-1)^k \omega \wedge \nabla^\xi s, \quad \omega \in \OmegaRm^k(M) , \  s \in \Gamma(L) .
$$
One easily checks that this does not work, as it would require that $\alpha\wedge \beta \in \OmegaRm^{k+1}(M)$ if $\alpha \in \OmegaRm^k(M)$ and $\beta \in \OmegaRm^1(M)$.
However, since $\nabla^\xi s\wedge \nabla^\xi s = 0$, the above expression still allows to define the \emph{curvature} of $\nabla^\xi$ as the map $F_{\nabla^\xi} : \Gamma(L) \to \OmegaRm^2(M,L)$ given by
$$
F_{\nabla^\xi} s = \nabla^\xi \circ \nabla^\xi s , \quad s \in \Gamma(L).
$$
Observe that, if locally $\nabla^\xi = \diffH - i \horiz{A}$ for some $A \in \Omega^1(M)$, then $F_{\nabla^\xi} = -i \diffRm \horiz{A}$. 



\bibliographystyle{amsalpha}
\bibliography{references}
\end{document}